\newcommand{\defn}[1]{\textcolor{blue}{\emph{#1}}}
\newcommand*{\doi}[1]{doi:
\href{https://dx.doi.org/#1}{\urlstyle{rm}\nolinkurl{#1}}}
\newcommand*{\arxiv}[1]{arXiv:
\href{https://arxiv.org/abs/#1}{\urlstyle{rm}\nolinkurl{#1}}}
\let\oldproofname=\proofname
\renewcommand{\proofname}{\rm\bf{\oldproofname}}
\newcommand{\RR}{\mathbb R}
\newcommand{\R}{\mathbb R}
\newcommand{\bna}{
\begin{eqnarray}}
  \newcommand{\ena}{
\end{eqnarray}}
\newcommand{\ba}{
\begin{eqnarray*}}
  \newcommand{\ea}{
\end{eqnarray*}}
\newcommand{\bs}[1]{}
\newcommand{\f}{{\mathbf f}}
\newcommand{\eps}{\varepsilon}
\newtheorem{theorem}{Theorem}[section]
\newtheorem{corollary}[theorem]{Corollary}
\newtheorem{lemma}[theorem]{Lemma}
\newtheorem{proposition}[theorem]{Proposition}
\newtheorem{remark}[theorem]{Remark}
\newtheorem{definition}[theorem]{Definition}
\theoremstyle{definition}
\newtheorem{example}[theorem]{Example}
\def\p{{\bf p}}
\def\l{{\bf l}}
\def\m{{\bf m}}
\def\q{{\bf q}}
\def\0{{\bf 0}}
\def\b{{\bf b}}
\let\oldv=\v
\def\v{{\bf v}}
\def\w{{\bf w}}
\def\r{{\bf r}}
\def\u{{\bf u}}
\def\x{{\bf x}}
\def\y{{\bf y}}
\def\a{{\bf a}}
\newcommand{\me}[1]{^{(#1)}}
\begin{document}
\title{Higher Order Rigidity and Energy}

\author{
  Steven J. Gortler
  \and
  Miranda Holmes-Cerfon
  \and
  Louis Theran
}
\date{}
\maketitle

\begin{abstract}

  In this paper,
  we revisit the notion of higher-order rigidity of a bar-and-joint framework.
  In particular, we provide a link between the rigidity properties of
  a framework, and the growth order of an energy function defined on
  that framework.
  Using our approach,
  we propose a general definition for the rigidity order of a
  framework, and we show that this definition does not depend on the
  details of the chosen energy function.
  Then we show how this order can be studied using higher order
  derivative tests.
  Doing so, we obtain a new proof that the lack of a second order
  flex implies rigidity.
  Our proof relies on our construction of a fourth derivative test,
  which may be applied to a critical point when the second derivative
  test fails.
  We also obtain a new proof that when
  the dimension of non-trivial first-order flex coefficients
  $\p'$
  equals $1$, then
  the lack of a
  $k$th order flex for some $k$ implies a framework is rigid. The
  higher order derivative tests
  that we study here may have applications in more general optimization problems.

\end{abstract}

\section{Introduction}

One of the most basic problems in rigidity theory is determining if a
bar-and-joint framework is  rigid in $\RR^d$.
A bar-and-joint framework $(G,\p)$ consists of a graph $G$ and a configuration
$\p:=\{\p_1 ... \p_n\}$ of $n$ points in $\RR^d$. Each edge $ij$ of $G$
has a Euclidean length
$d_{ij}:=|\p_i-\p_j|$.
The framework is called (locally)
\defn{rigid} if there is a neighborhood $U$ of $\p$
so that
any $\q \in U$
with the same edge lengths (in $G$) as $\p$ must be
congruent to $\p$.
If a framework is not rigid, it is called \defn{flexible}.
A motion $\p(t)$ with $\p(0)=\p$ that preserves the edge lengths is called a
\defn{finite flex}.
A framework that is flexible must have an
analytic finite flex~\cite{Gluck}.

As presented, the question of determining whether a framework $(G,\p)$
is rigid involves solving the system of quadratic equations
$|\q_i - \q_j|^2=
d^2_{ij}$ over the variable configuration $\q$,
to determine if $\p$ is an isolated solution (once
congruences are removed). This  is an instance of an
intractable problem.
In particular, testing rigidity has been shown to
be co-NP hard~\cite{Abbot-Hard}, so there is likely no efficient
algorithm
that works in all cases.

Instead,
``one sided'' tests that either certify
rigidity or are inconclusive are used
in practice.
Two  most important tests are: first order rigidity (e.g.,
\cite{Asimow-Roth-I}),
and
second order rigidity \cite{connelly-second}.  These have the property that
\[
  \text{first order rigid} \Longrightarrow 
  \text{second order rigid} \Longrightarrow \text{rigid}.
\]
(There is also the important test of prestress
  stability~\cite{pss} that
  has a different flavor and that
we will address in an upcoming companion paper~\cite{HOpre}.)
First and second order
rigidity make use of the notion of a $k$th-order flex, which is
an analytic trajectory
$\p(t)$
that satisfies
the constraints of a finite flex
through $k$th order in $t$.  A framework is
called first order rigid if it does not
have a non-trivial first-order flex; this condition can be checked
with linear algebra.  A framework is
called second order rigid if it does not
have a non-trivial second-order flex; checking this is already a non-linear
problem for which no efficient algorithm is known.

One might expect that, if a framework is not second order rigid,
i.e., it has a non-trivial second-order flex, one could
simply define weaker notions of rigidity by
seeing if it lacks a
non-trivial $k$th-order flex, for
increasing values of $k\ge 3$.
Unfortunately, Connelly and Servatius \cite{conSer}
described a framework $(G,\p)$ that has no non-trivial third
order flex, but is in fact flexible. Any analytic finite flex
$\p(t)$ of the framework constructed in \cite{conSer} has  the property
that $\dot{\p}(t)|_{t=0}=0$, which means
the flex has to stop at $\p$ to ``turn around''.  A flexible framework
with this behavior is called a cusp mechanism.

There are several downstream consequences from the existence of a cusp
mechanism.  The first is that it explains, post hoc, why the proof
that second order rigidity implies rigidity is quite subtle.  It needs to
rule out, at least implicitly, the existence of a cusp mechanism
without a non-trivial
second order flex.  (No such complications can occur with first order
  rigidity, and
there are many ways to straightforwardly show it implies rigidity.)
The second is
that the existence of a cusp mechanism leaves
open
the question of how to
come up with a
general definition that
assigns a numerical ``order'' to each rigid framework.
While there have been several proposals \cite{tachi,gar,stachel,narwal},
each comes with difficulties
(see Section~\ref{sec:prvOrder} below)
and no consensus on the right definition has emerged.

From the point of view of engineering, there is some informal understanding that
when a framework has
higher order flexes, it should ``feel'' floppy, even if it is
mathematically rigid. One way to make sense of
how floppy it ``feels'' is to introduce an energy function, which is
a function that increases when bars
change length from their preferred lengths, and which is so-named
because of the connection to the deformation
energies of physical materials. For a first-order rigid framework,
such an energy function will have a
minimum at $\p$ with a Hessian that is positive definite,
hence, the energy increases quadratically in distance as the
framework moves away from $\p$. For frameworks
that are not first-order rigid, the energy is expected to increase
more slowly than quadratically. Precisely
how slowly has not yet been quantified,  and indeed one might expect
the behavior could depend on the
particular choice of energy function.

Our goal in this paper is to revisit the notion of higher-order rigidity and the connection to energy functions one can define on frameworks.
We aim to provide a link between the rigidity
properties of a framework, and a notion of the growth order of an
associated physically-based
energy function, and furthermore to explore the extent to which this
growth order depends on the
choice of energy function.
In particular
this allows us to propose a formal definition of the
rigidity order of a framework, which we
believe is  natural and  robust to previous concerns.
This approach also allows us to prove existing theorems in new ways, thus
providing further insight into these
theorems and in particular into their connection to properties of
physical interest.  In sum, we hope that this paper clarifies and
unifies an area of rigidity theory where
the literature to date has been somewhat complicated and confusing.

Our specific contributions are as follows.

Inspired by previous work~\cite{tachi,gar,stachel,narwal},
in Definition~\ref{defn:order},
we present a definition for the rigidity order of a framework. We
define the rigidity order of a framework $(G,\p)$ using
the growth order of an energy function $E$ near $\p$.  While the definition
might appear to depend on the energy $E$, we show that, in fact,
every $E$ in the class of ``stiff-bar'' energies, which includes all of the
energies commonly used to model fixed-length bars,
yields the same rigidity order for $(G,\p)$.

In Theorem~\ref{thm: order jk-flex},
we show that this rigidity order can be expressed in terms of
the existence and non-existence of certain $(j,k)$-flexes (to be
defined) of $(G,\p)$.  In general, this
characterization involves exploring
$(j,k)$ flexes over all positive integers $j$ and $k$.

Next,
we use our framework to
show that in some cases, we only need to explore
$(1,k)$ flexes (i.e., we can restrict to $j=1$). We also show that in some cases
checking for a $(1,k)$ flex
can be done efficiently.
In particular,
we provide  a novel
energy-based
proof of Theorem~\ref{thm:2or}, showing that if a framework has no
non-trivial
second-order flex, then it is rigid (and has rigidity order of $2$
under our energy based definition).
Moreover, when the dimension of the space of  non-trivial first-order
flex coefficients $\p'$ is only $1$, this
test can be done efficiently.
(The efficiency in this case essentially follows
  from ~\cite[Prop. 4.1.2]{pss};
see  Remark~\ref{rem:2comp}.)

The technical
difficulty we encounter is that, when $(G,\p)$ is
not first order rigid, $\p$ will be a degenerate critical point of the energy.
To deal with this  issue, we describe a novel 4th derivative test for
classifying
a degenerate critical point. This test generalizes ideas  contained in
Cushing~\cite{cushing} (Cushing's result treats only critical points where
the Hessian has nullity one), and it may be of independent interest
beyond rigidity theory.

In the rigidity literature, there is one special case in which the
naive generalization of second
order rigidity test can be shown to work.
V. Alexandrov  showed~\cite{alexImp} that,
when the dimension of the space of  non-trivial first-order flex coefficients $\p'$ is only $1$,
testing for third and higher order flexes \emph{can} be used to
establish rigidity.
In particular, he showed that, in this case, if there exists a $k$ so that there
is no non-trivial $k$th order flex, then
the framework is rigid (Theorem~\ref{thm:alex} below).
Alexandrov's test can be done
efficiently. This efficiency essentially follows from
~\cite{alexImp}, see Lemma~\ref{lem:kcomp}.

In this paper,
we provide a novel proof of Alexandrov's Theorem~\ref{thm:alex}
that is also based on our energy approach and  critical
point analysis.
We show that when it can be applied,
Alexandrov's test tells us the rigidity order
of the framework.
Our proof is informed by Cushing's ideas in \cite{cushing},
but different in the details.  We discuss the relationship below.

The study of structures that are rigid, but
only at ``high order'' is not only of mathematical
interest but also arises in applications.
While it is true that an engineer designing a practical and unyielding
bar-and-joint structure will almost certainly desire
first order rigidity, the design of rigid yet softer structures 
is also important with its study
dating back to 
Leonardo da Vinci~\cite{leonardo}
(see Figure~\ref{fig:k33} (right)).
At the same time, designing lattices and origami patterns with controlled soft modes leads to programmable metamaterials \cite{silverberg2014using,chen2018branches,zhai2021mechanical}. 
Moreover, such structures arise naturally
under various physical processes studied
in the sciences.
 In biological systems, 
 such structures naturally arise 
as cells stiffen under stress, migrate, or reshape~\cite{manning2024rigidity, hain2025optimizing}.
 In condensed matter physics, soft modes capture collective atomic motions in crystals and glasses, explaining phenomena such as crystal nucleation \cite{meng2010free,hk}, yielding under stress and the anomalous heat capacity of disordered solids \citep{mao2018maxwell}. 
 Understanding these modes provides powerful insights into statistical mechanics, failure and plasticity.

\subsection*{Acknowledgments}
We thank Dylan Thurston for  helpful conversations
and suggestions.
We thank Georg Nawratil for pointing out the
difference between trying to define rigidity orders
vs. flexibility orders.
This material is based upon work supported by the
National Science Foundation
(NSF) under Grant No. DMS-1929284 while the authors were in residence
at the Institute for
Computational and Experimental Research in Mathematics (ICERM) in
Providence, RI,
during the semester program ``Geometry of Materials, Packings and
Rigid Frameworks.''
M.H.C. acknowledges support from the Natural Sciences and Engineering
Research Council of Canada (NSERC), RGPIN-2023-04449 /
Cette recherche a été financée par le Conseil de recherches en
sciences naturelles et en génie du Canada (CRSNG). L.S.T. was 
partially supported by UKRI by the EPSRC Small Grants Scheme number 
UKRI1112.

\section{Rigidity - Summary and definitions}
We briefly recall the required rigidity definitions.

\begin{definition}
  We fix a dimension $d$.
  Let $n$ be a positive integer.
  A \defn{configuration} is an indexed set
  $\p:=\{\p_1,\ldots,\p_n\}$ of $n$ points, each in
  $\RR^d$. Let $G$ be a graph on $n$ vertices
  with $m:=|G|$ edges.
  A \defn{framework} is a pair $(G,\p)$.
  We assume there
  are no coincident points connected
  by an edge.

  Two frameworks, $(G,\p)$ and $(G,\q)$
  are \defn{equivalent} if for each edge $ij$ in $G$,
  we have $|\p_i-\p_j|=|\q_i-\q_j|$.
  Two configurations $\p$ and $\q$
  are \defn{congruent} if for each vertex pair
  $ij$,
  we have $|\p_i-\p_j|=|\q_i-\q_j|$.
  Congruent configurations arise due to isometries of
  $\RR^d$ acting on the points.

  We say that $(G,\p)$ is (locally)
  \defn{rigid} in $\RR^d$
  if there is a neighborhood $U$ of $\p$ so that
  for any $\q\in U$ where $(G,\q)$ is equivalent to
  $(G,\p)$, we have $\q$ congruent to $\p$.
  Otherwise we say that it is (locally)
  \defn{flexible}.
\end{definition}

Euclidean edge lengths are invariant with respect to congruence,
which means that every neighborhood of a configuration
$\p$ will contain other configurations $\q$ with the same
energy, preventing $\p$ from being a strict local minimum of an
edge-based energy.
To deal with this, we will remove congruences
by pinning down configurations (freezing degrees of freedom).
This is a
standard folklore operation in rigidity theory, but
some care is needed to deal with degenerate situations, where
the affine span of the framework is smaller than the space it is embedded in.

\begin{definition}\label{def: pinned configuration}
  Let $\p$ be a configuration of $n$ points in $\RR^d$
  and set $D$ to be the minimum of $n$ and $d$. 
  We say that $\p$ is \defn{pinned}
  if $\p_1$ is at the origin,
  and then, for $2\le i\le D$, $\p_i$ is in the
  span of the first $i-1$ coordinate vectors.

  Let  $\p$ be a configuration of $n$ points
  in $\RR^d$ with an
  $\ell$-dimensional affine
  span.
  We say $\p$ is \defn{lead-spanning} if
  $\p_1, \ldots, \p_{\ell+1}$ are affinely independent.
\end{definition}

Now we can define local rigidity for pinned frameworks.

\begin{definition}\label{def: pinned rigid}
  Fix a dimension $d$.  Let $\p$
  be a pinned and lead-spanning $d$-dimensional  configuration.
  A framework $(G,\p)$ is
  \defn{pinned rigid} in $\RR^d$
  if there is a neighborhood $U\ni \p$
  so that
  for any $\q\in U$ where $(G,\q)$ is equivalent to
  $(G,\p)$ and $\q$ is pinned,
  we have $\q = \p$.
  Otherwise we say that it is
  \defn{pinned flexible}.
\end{definition}

The next proposition, which is
folklore in the rigidity literature,
states that we can restrict our study
of rigidity to the pinned setting.
For completeness, we provide a
proof in  Appendix~\ref{sec:pin}.

\begin{proposition}\label{prop: pinned rigid}
  Fix a dimension $d$.
  Let $(G,\p)$ be a framework such that $\p$
  is lead-spanning.
  Let $\q$ be a pinned configuration that is
  congruent to $\p$.
  Then $(G,\p)$ is rigid in
  $\RR^d$ if and only if
  $(G,\q)$ is
  pinned rigid in $\RR^d$.
\end{proposition}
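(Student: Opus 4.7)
The plan is to exploit the fact that the pinning conditions locally cut out a transversal to the Euclidean isometry orbit through $\p$, so that pinned rigidity is exactly rigidity modulo congruences. Since rigidity is congruence-invariant (any isometry $A$ carries a finite flex of $(G,\p)$ to one of $(G, A\p)$), I may replace $\p$ by $\q$ at the outset and assume $\p$ itself is pinned. Both directions then reduce to comparing the rigidity and pinned-rigidity neighborhoods at $\p$.

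For the forward direction, suppose $(G,\p)$ is rigid on a neighborhood $U$. Given any $\ell$-pinned $\q'\in U$ equivalent to $\p$, rigidity produces an isometry $A$ with $A\p=\q'$, and the goal is to conclude $A\p=\p$. Inductively on $i=1,\dots,\ell+1$: $A\p_1=0$ forces $A$ to be linear; $A\p_2\in\operatorname{span}(e_1)$ with $|A\p_2|=|\p_2|$ and, by closeness of $\q'$ to $\p$, on the same ray as $\p_2$, so $A\p_2=\p_2$ and hence $Ae_1=e_1$; at the $i$th step, $A$ already fixes $e_1,\dots,e_{i-2}$, and $A\p_i\in\operatorname{span}(e_1,\dots,e_{i-1})$ is close to $\p_i$ whose $(i-1)$st coordinate is nonzero (by non-degeneracy), which pins down $Ae_{i-1}=e_{i-1}$. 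Thus $A$ is the identity on $\operatorname{span}(e_1,\dots,e_\ell)$, which contains every vertex of $\p$ since $\p$ has $\ell$-dimensional affine span and $\p_1=0$; therefore $\q'=A\p=\p$.

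For the reverse direction, suppose $(G,\p)$ is pinned rigid on a neighborhood $V$. I will build a smooth pinning map $\r\mapsto A_\r$ defined near $\p$, sending each configuration $\r$ to a near-identity isometry such that $A_\r\r$ is $\ell$-pinned and $A_\p=\mathrm{Id}$. Construct $A_\r$ by a Gram--Schmidt-style sequence: translate so that $\r_1\mapsto 0$; then rotate to push $\r_2$ into $\operatorname{span}(e_1)$ on the same ray as $\p_2$; then rotate fixing the $e_1$-axis to push $\r_3$ into $\operatorname{span}(e_1,e_2)$ matching the sign of the second coordinate of $\p_3$; and so on through $\r_{\ell+1}$. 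Affine independence of $\p_1,\dots,\p_{\ell+1}$ guarantees that each of these rotations is uniquely determined and depends smoothly on $\r$ in a neighborhood of $\p$. Now choose $U\ni\p$ small enough that $A_\r\r\in V$ whenever $\r\in U$. If such an $\r$ is equivalent to $\p$, then so is the pinned configuration $A_\r\r$, and pinned rigidity forces $A_\r\r=\p$, so $\r=A_\r^{-1}\p$ is congruent to $\p$.

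The main obstacle is the construction and smoothness of the pinning map $\r\mapsto A_\r$; the whole proposition hinges on non-degeneracy, which is precisely the condition that makes the sequential rotations well-defined and continuous in $\r$ near $\p$. Once that slice is in hand, both directions are routine bookkeeping on isometries.
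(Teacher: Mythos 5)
Your proposal is correct and takes essentially the same route as the paper: one direction pins a nearby configuration by a near-identity Gram--Schmidt-style isometry (the paper's ``rotation near the identity''), and the other shows by induction over the pinned vertices, using non-degeneracy and closeness to exclude the reflection ambiguity, that the congruence must act as the identity on the affine span --- which is exactly the content of the paper's lemma that no other pinned configuration near $\p$ is congruent to $\p$. No gaps beyond minor presentational differences.
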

Proposition \ref{prop: pinned rigid} implies (by appropriate
relabeling) we can pin (using a similar procedure) \emph{any}
lead spanning subset of
$d+1$ vertices in $(G,\p)$ without
changing rigidity.
In this paper, we are interested
in the question of whether a given framework $(G,\p)$
is rigid.  We can relabel $\p$ (and $G$ in the same way)
and then apply a congruence to obtain a pinned framework
$(G',\q)$.  The above proposition says that $(G,\p)$ is
rigid if and only if $(G',\q)$ is pinned rigid, so rigidity and
pinned rigidity are equivalent questions.

Later on,
we will define other notions such as \emph{$(j,k)$-flexes} and
\emph{$(j,k)$-E-flexes} that are only defined for pinned frameworks.
In Appendix~\ref{sec:pin}, we show 
that whether or not these
exist is independent of the choice of
which vertices were pinned, as long as 
they are lead spanning.

Together these propositions imply that our main results,
which use energies to prove pinned rigidity, also prove
(unpinned) rigidity. Additionally the rigidity orders we define
are the same under any 
appropriate  choice of 
vertices pinned.
Hence:
\emph{in the remainder of the paper, when studying the rigidity
  of a framework $(G,\p)$,
  we will assume that $\p$ is
  lead-spanning and pinned. 
  All configurations,
  frameworks, trajectories, and flexes of different order that we consider are
are pinned (though not necessarily lead-spanning).}

\subsection{Rigidity and flexes}

To study higher order rigidity of a framework $(G,\p)$,
we will use families of analytic trajectories we call
$(j,k)$-flexes.  Informally, a $(j,k)$-flex starts moving at
order $j$ and preserves  the edge lengths through order $k$.
We will see that a $(1,1)$-flex coincides with the standard
rigidity-theoretic notion of an infinitesimal flex.  Now we
proceed to the formal definitions.
\begin{definition}\label{def: trajectory}
  Given a framework $(G,\p)$,
  a \defn{trajectory at $\p$}
  is a non-constant, mapping
  $\p(t)$ with $\p(0)=\p$,
  which is defined for $t \in [0,\eps]$, for some $\eps > 0$, and which 
  is a real analytic for all points in 
  $[0,\eps]$.
  When clear from the context, we will often drop the
  ``at $\p$'' phrase.

  We denote the $k$th derivative of $\p(t)$ at $t = 0$ by
  \ba
  \p^{(k)} :=  \left.\frac {d^k} {dt^k} \p(t)\right|_{t=0}.
  \ea

  We write 
\ba
\p(t) &=& \p + \frac{1}{j!} \p^{(j)}t^j + \cdots + 
\frac{1}{k!}\p^{(k)}t^k + \frac{1}{(k+1)!}\p^{(k+1)}t^{k+1} +
    \cdots
\ea
where $j$ is the first non-zero derivative of $\p(t)$.

We will also write 
\ba
\p(t) &=& \p + \a^{(j)}t^j + \cdots + 
\a^{(k)}t^k + \a^{(k+1)}t^{k+1} +
    \cdots
\ea
where $\a^{(k)}=\frac{1}{k!}\p^{(k)}$ is the $k$th Taylor
coefficient of $\p(t)$ at $t=0$.  
\end{definition}

In the above, we  use $\a^{(k)}$ to denote a coefficient, not a derivative. We use superscripts for $\a^{(k)}$
so as not to confuse it with $\a_k$, the $k$th point in the 
configuration $\a$.

\medskip

We now given the definition of a $(j,k)$ flex.
Define the following functions:
\ba
l_{ij}(\p) &:=& |\p_i-\p_j| \\
\l(\p) &:=& (l_{ij})_{ij\in G} \\
m_{ij}(\p) &:=& |\p_i-\p_j|^2 \\
\m(\p) &:=& (m_{ij})_{ij\in G}.
\ea
Here $\l(\p)$ and $\m(\p)$ are vectors in $\RR^{|G|}$
indexed by $\{ij\} \in G$.
So $\l(\p)$ (resp. $\m(\p)$) is
the vector of (resp. squared)
edge lengths of a framework $(G,\p)$.

\begin{definition}
  We say a real  valued $C^k$ function $f(t)$
  is \defn{$k$-vanishing} if its first $k$ derivatives vanish at $t=0$.
  A $C^k$ real vector valued function $\f(t)$
  is $k$-vanishing if all of its components
  are $k$-vanishing real valued functions.

  We say a real or vector valued $C^k$ function is
  \defn{$k$-active} if it is $k-1$-vanishing but not   $k$-vanishing.
\end{definition}

\begin{definition}
  Let $(G,\p)$ be a framework.
  Let $j,k$ be  integers $\ge 1$.
  A
  trajectory $\p(t)$ at $\p$ is a \defn{$(j,k)$-flex}
  of $(G,\p)$
  if
  $\p(t)$ is $j$-active
  (so $\p^{(j)}\neq0$)
  and
  $\m(\p(t))$ is $k$-vanishing.
\end{definition}

\begin{remark}
  For $i\in [1..j-1]$
  \[
    \frac{d^i}{dt^i}\m(\p(t))|_{t=0} = 0
  \]
  is true for any $j$-active trajectory.
  The additional
  restriction on a $(j,k)$-flex is that the derivatives
  of $\m(\p(t))$
  also
  vanish for $i\in [j,k]$.
\end{remark}

Because of this remark, only $(j,k)$ flexes with $k \geq j$ are
interesting, though the case $k<j$ sometimes occurs as a technical
step in proofs (e.g. the proof of Lemma \ref{lem42}).

\begin{remark}
  \label{rem:lorm}
  In this definition, we could instead use
  the vector of
  (unsquared) edge lengths $\l$,
  instead of $\m$, provided none of the lengths is 0.
  This follows from Lemma~\ref{lem:vanish1}, with $g=m_{ij}(\p(t))$
  for each $ij\in G$, and $f(g) = \sqrt{g}$. We choose to work with $\m$ in the definition because it leads to slightly simpler equations to solve for the flexes. 
\end{remark}

\begin{remark}
  \label{rem:irrFlex}
  This definition is similar to one of
  Stachel~\cite{stachel,narwal}
  (see also~\cite{stachel1999,sabitov,tarnai,gar}).
  Stachel also requires that a
  $(j,k)$ flex does not
  arise from a polynomial parameter substitution of a
  $(j',k')$ flex with $j'<j$.
  We will not do that here.
  For us, if $\p(t):=\p+\a^{(1)}t$ is a $(1,1)$ flex, then
  $\p+\a^{(1)}t^2+0t^3$ is a $(2,3)$ flex, and
  $\p+\a^{(1)}t^3+0t^4+0t^5$ is a $(3,5)$ flex.

\end{remark}

\begin{remark}
  \label{rem:trunc}
  Our definition of a $(j,k)$-flex is slightly different than some
  others that appear in the literature, because we allow
  $\p(t)$ to be an
  (analytic) $j$-active trajectory, and do not restrict it to being a
  polynomial of degree $k$.
  However, if
  \[
    \p(t) = \p + \a^{(j)}t^j + \cdots + \a^{(k)}t^k + \a^{(k+1)}t^{k+1} +
    \cdots
  \]
  is a $(j,k)$-flex, then so too is the truncation
  \[
    \p + \a^{(j)}t^j + \cdots + \a^{(k)}t^k,
  \]
  which is a polynomial.
\end{remark}

\begin{remark}

  Given a $(j,k)$-flex (with $k$ finite), the
  first $k$ derivatives 
  form
  an ordered sequence
  $(\p, \p',\p'',\p''',\ldots,\p^{(k)})$ of  configurations
  such that:
  \bna
  \p^{(i)} = 0 & \text{for all $1\le i\le j-1$}; \\
  \label{eq:flex}
  \sum_{a=0}^{l} \binom{l}{a}
  ({\p}^{(a)}_v-{\p}^{(a)}_w)\cdot({\p}^{(l-a)}_v-{\p}^{(l-a)}_w) =0
  & \text{for all $1\le l\le k$}.
  \ena
  This can be seen by taking the first $k$ derivatives of
  $\m(\p(t))$ (see, e.g., \cite{connelly-second}).

  For example, for a $(1,4)$-flex, the
  coefficients of the
  first $4$ terms of its Taylor expansion satisfy, for all edges $vw$,
  the system of nonlinear equations (a factor of 2 has been removed
  from all equations):
  \begin{align*}
    (\p_v-\p_w)\cdot(\p'_v-\p'_w) &=0\\
    (\p_v-\p_w)\cdot(\p''_v-\p''_w) + (\p_v'-\p_w')\cdot(\p'_v-\p'_w) &=0\\
    (\p_v-\p_w)\cdot(\p'''_v-\p'''_w) + 3(\p_v'-\p_w')\cdot(\p''_v-\p''_w) &=0\\
    (\p_v-\p_w)\cdot(\p''''_v-\p''''_w)
    + 4(\p_v'-\p_w')\cdot(\p'''_v-\p'''_w)
    + 3(\p_v''-\p_w'')\cdot(\p''_v-\p''_w)
    &= 0.
  \end{align*}
In some rigidity literature~\cite{connelly-second}, a flex is defined to be the sequence 
  $(\p, \p',\p'',\p''',\ldots,\p^{(k)})$  itself.
\end{remark}

If a framework $(G,\p)$ is flexible then it must have an
analytic finite flex $\p(t)$ \cite{Gluck,milnor}. We
could define such a $\p(t)$ to be a $(j,\infty)$-flex,
for the appropriately chosen $j$.  We cannot, however,
reparameterize the $(j,\infty)$-flex $\p(t)$ to be a
$(1,\infty)$-flex $\q(s) := \p(s^j)$ because
$\q(s)$ has a Puiseux series expansion in $s$ (as
opposed to a power series), and it may not even be $C^2$
at $s=0$.  Hence, to develop sufficient conditions for
rigidity in terms of $(j,k)$-flexes, we will need to
consider all possibilities for $j$ and $k$.

\subsection{Sufficient conditions for rigidity}
Now we review some sufficient conditions for rigidity that appear in the
literature and rephrase them in terms of $(j,k)$ flexes.

\subsubsection{First-order rigidity}
In the rigidity literature, a lead-spanning pinned framework
that does not have a $(1,1)$-flex is called \emph{first order rigid} or
\emph{infinitesimally rigid}.  Checking for the existence of a $(1,1)$-flex can be done efficiently using linear algebra. Later, we define a the rigidity order
of a framework.
A first-order rigid framework will have, in our definition,
rigidity order one.
\begin{theorem}
  \label{thm:1or}
  If a framework $(G,\p)$ is first-order rigid,
  then it is rigid.
\end{theorem}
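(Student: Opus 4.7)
The plan is to work in the pinned setting, where by Proposition~\ref{prop: pinned rigid} pinned rigidity is equivalent to rigidity. First I would translate the hypothesis: a $(1,1)$-flex is an analytic $\p(t)$ with $\p^{(1)}\neq 0$ such that $\m(\p(t))$ is $1$-vanishing, and differentiating once at $t=0$ this last condition reads $D\m_{\p}(\p^{(1)})=0$. Conversely, any nonzero $v\in\ker D\m_{\p}$ yields the $(1,1)$-flex $\p(t)=\p+tv$. Hence first-order rigidity is precisely the statement that $D\m_{\p}$ is injective on the pinned tangent space.

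From here the cleanest finish is the standard immersion argument: since $D\m_{\p}$ has trivial kernel, the constant rank theorem tells us that $\m$, restricted to a small neighborhood of $\p$ inside the pinned configuration space, is injective. Any pinned $\q$ near $\p$ with $(G,\q)$ equivalent to $(G,\p)$ satisfies $\m(\q)=\m(\p)$, so $\q=\p$. This is pinned rigidity, and Proposition~\ref{prop: pinned rigid} converts it into rigidity of $(G,\p)$.

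More in keeping with the energy viewpoint pursued in the rest of the paper, one can instead consider $E(\q):=\|\m(\q)-\m(\p)\|^{2}$. Then $E(\p)=0$, and because the outer factor $\m(\q)-\m(\p)$ vanishes at $\p$, we have $\grad E(\p)=0$ and the Hessian at $\p$ simplifies to $2\,D\m_{\p}^{\top}D\m_{\p}$. Injectivity of $D\m_{\p}$ makes this positive definite, so the ordinary second derivative test identifies $\p$ as a strict local minimum of $E$. Any pinned $\q$ equivalent to $(G,\p)$ satisfies $E(\q)=0=E(\p)$, so $\q=\p$ in a neighborhood, giving pinned rigidity and hence rigidity.

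The argument is essentially routine; the only care needed is the bookkeeping around pinning (ensuring $D\m_{\p}$ and its kernel are taken on the correct pinned tangent space, and that non-degeneracy of $\p$ matches the affine span used to define the pinning), all of which is covered by Definition~\ref{def: pinned configuration} and Proposition~\ref{prop: pinned rigid}. I expect no nontrivial obstacle; the real work in the paper is at second and higher order, where the Hessian degenerates and the above one-line critical point analysis must be replaced by the fourth derivative test.
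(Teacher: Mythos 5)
Your proposal is correct, and both of your variants go through; but they are not the route the paper takes for Theorem~\ref{thm:1or} itself. The paper disposes of this theorem by citation, sketching Connelly's argument: if $(G,\p)$ were flexible it would admit an \emph{analytic} finite flex (Gluck/Milnor), whose leading Taylor coefficient yields a $(j,j)$-flex and hence a $(1,1)$-flex, contradicting first-order rigidity. That route leans on the nontrivial existence of analytic flexes, which your first argument avoids entirely: identifying first-order rigidity with injectivity of $D\m_{\p}$ on the pinned space and invoking local injectivity of an immersion is the classical Asimow--Roth-style proof, self-contained modulo Proposition~\ref{prop: pinned rigid}. Your second, energy-based variant is essentially the paper's own later proof of the stronger Theorem~\ref{thm:main0}: your $E(\q)=\|\m(\q)-\m(\p)\|^{2}$ is (up to constants) the ``algebraic'' stiff-bar energy, and your direct computation that the Hessian at $\p$ is $2\,D\m_{\p}^{\top}D\m_{\p}$ replaces the paper's combination of Theorem~\ref{thm: E min and rigidity} and Lemma~\ref{lem: stiff-bar Hessian K}, which work for an arbitrary stiff-bar energy. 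What the paper's formulation buys is exactly that extra generality plus the quantitative conclusion (rigidity order $1$, i.e.\ quadratic energy growth), which your argument could also deliver with one more line via Proposition~\ref{prop:indic}(a), though it is not needed for Theorem~\ref{thm:1or} as stated. The only point worth stating carefully in your write-up is the one you already flag: ``first-order rigid'' here means no \emph{pinned} $(1,1)$-flex, so the kernel in question is that of the pinned rigidity matrix, and non-degeneracy of $\p$ is what licenses the appeal to Proposition~\ref{prop: pinned rigid}.
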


There are many, many,  ways to prove
Theorem~\ref{thm:1or} (see \cite{Asimow-Roth-I,connelly-second,Gluck}
for some examples).
In~\cite{connelly-second}, it is proven
as follows: assume $(G,\p)$ has a finite flex $\p(t)=\p+\a^{(j)}t^j + \cdots$
for some $j$ and coefficient
$\a^{(j)}$.
This gives rise to the $(j,j)$-flex $\p+\a^{(j)}t^j$,
from which it follows that 
$\q(s):=\p+\a^{(j)}s$ is a
$(1,1)$ flex, with $s=t^j$.

For later, we need some specific facts and notation about
$(1,1)$ flexes.

\begin{definition}
  The space of first derivatives $\p'$ of the $(1,1)$  flexes
  for (pinned) $(G,\p)$, together with the zero vector,
  is a linear
  subspace of (pinned) configuration space,
  which we will denote as $K$ (for kernel).
  We will also chose a complementary space in (pinned)  configuration space
  and
  denote it $\overline{K}$.
\end{definition}

Note that $K$ is the kernel of the
so-called (pinned) rigidity matrix of
$(G,\p)$. The rigidity matrix $R(\p)$ is such that the
linear system of equations
$$(\p_v - \p_w)\cdot (\p'_v - \p'_w)=0 \qquad
\text{for all edges } vw,$$
can be written as $R(\p)\p'=0$. Pinning certain degrees of freedom is
equivalent to removing the corresponding columns of $R(\p)$ and the
corresponding elements of $\p'$.


\subsubsection{Second-order rigidity}
The next rigidity test we consider
is second
order rigidity.  A lead-spanning and pinned framework $(G,\p)$ is called
second order rigid if it does not have a $(1,2)$-flex.
A second order rigid framework will have rigidity order
two, as we define it below.  The main fact about second order
rigidity is that it implies rigidity.
\begin{theorem}[{\cite{connelly-second}}]
  \label{thm:2or}
  If a framework $(G,\p)$ is second order rigid, then it is
  rigid.
\end{theorem}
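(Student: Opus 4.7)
The plan is to prove rigidity via an energy argument: build a non-negative function $E$ on $\ell$-pinned configuration space with $E(\p) = 0$, and show that the absence of a $(1,2)$-flex forces $\p$ to be a \emph{strict} local minimum of $E$.  Once this is established, any nearby $\ell$-pinned $\q$ equivalent to $\p$ satisfies $E(\q) = 0 = E(\p)$ and therefore coincides with $\p$, giving pinned rigidity and hence rigidity by Proposition~\ref{prop: pinned rigid}.  I would use the stiff-bar energy
\[
    E(\q) := \sum_{ij \in G} \bigl( m_{ij}(\q) - m_{ij}(\p) \bigr)^2.
\]
A direct calculation gives $\grad E(\p) = 0$ and shows that the Hessian $H := \grad^2 E(\p)$ is positive semidefinite with $\ker H = K$, so if $(G,\p)$ is first-order rigid the usual second derivative test concludes, and otherwise $\p$ is a degenerate critical point whose Hessian kernel is exactly $K$.

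To handle the degeneracy I would set up a fourth derivative test by reduction to $K$.  Take $\overline{K}$ orthogonal to $K$, so $H$ is block diagonal with the $\overline{K}$-block positive definite; apply the implicit function theorem to $\partial_{\overline{K}} E = 0$ to obtain a smooth germ $\u^*: K \to \overline{K}$ with $\u^*(0) = 0$ and $\grad \u^*(0) = 0$ (hence $\u^*(\v) = O(|\v|^2)$).  The reduced energy $F(\v) := E(\p + \v + \u^*(\v))$ on $K$ has a strict local minimum at $0$ if and only if $E$ has one at $\p$.  By construction $F(0) = 0$, $\grad F(0) = 0$, and $\grad^2 F(0) = 0$; the cubic form $\grad^3 F(0)$ likewise vanishes on $K$, because writing $g_{ij}(\q) := m_{ij}(\q) - m_{ij}(\p)$, every term in $\grad^3 E(\p)$ carries a factor of the form $\grad g_{ij}(\p) \cdot \v$, and this vanishes for $\v \in K$.

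The heart of the argument is to compute $\grad^4 F(0)$ and verify it is positive definite on $K$.  Substituting $\u^*(t\v) = t^2 \w(\v) + O(t^3)$ and expanding $m_{ij}$ along the curve $\p + t\v + t^2 \w(\v)$, a short Taylor calculation yields
\[
    F(t\v) = t^4 \sum_{ij \in G} \beta_{ij}\bigl(\v, \w(\v)\bigr)^2 + O(t^5),
    \qquad
    \beta_{ij}(\v, \w) := 2(\p_i - \p_j) \cdot (\w_i - \w_j) + |\v_i - \v_j|^2.
\]
Moreover, the $\w(\v)$ produced by the implicit function theorem is precisely the minimizer of the convex quadratic $\w \mapsto \sum_{ij} \beta_{ij}(\v, \w)^2$.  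The condition $\beta_{ij}(\v, \w) = 0$ for every edge is exactly the second-order system required for $(\v, \w)$ to extend to a $(1,2)$-flex, so the hypothesis forces $c(\v) := \min_\w \sum_{ij} \beta_{ij}(\v, \w)^2 > 0$ for every $\v \in K \setminus \{0\}$.  Since $c$ is continuous and homogeneous of degree $4$, compactness of the unit sphere in $K$ yields $F(\v) \ge c_0 |\v|^4$ near $0$ for some $c_0 > 0$, as required.

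The main obstacle is the fourth derivative test itself.  The classical second derivative test is inconclusive because $\dim K$ may be positive, and Cushing's test~\cite{cushing} only handles $\dim K = 1$.  The delicate point is tracking how the implicit-function correction $\u^*(\v) = O(|\v|^2)$ enters the Taylor expansion of $F$, exploiting the cancellations $\grad g_{ij}(\p) \cdot \v = 0$ on $K$, and then recognizing the resulting quartic form as the value of the quadratic program $\min_\w \sum_{ij} \beta_{ij}(\v, \w)^2$ whose positivity encodes the second-order rigidity hypothesis.
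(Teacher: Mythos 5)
Your proposal is correct, but it reaches the theorem by a genuinely different route than the paper. The paper works with an arbitrary stiff-bar energy and never reduces to $K$: it builds the explicit two-parameter family of test trajectories $\delta\p(t;\p_0',\p_0'') = \p_0' t + \p_0'' t^2$ with $\p_0'\in K$, $\p_0''\in\overline{K}$ ranging over a sphere, proves this family is ``indicative at order $4$'' (compact parameter set, a uniform Lagrange-remainder bound, and an ellipsoid argument for the initial-coverage property), and then converts a trajectory along which the energy vanishes through order $4$ into a $(1,4)$ $E$-flex and, via the flex/E-flex equivalence (Theorem~\ref{thm:coin}) together with Lemma~\ref{lem:isactive}, into a $(1,2)$-flex. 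You instead fix the algebraic energy, perform a Lyapunov--Schmidt reduction over $\overline{K}$ via the implicit function theorem, and show the reduced function on $K$ has leading \emph{homogeneous} quartic form $c(\v)=\min_{\w}\sum_{ij}\beta_{ij}(\v,\w)^2$, whose strict positivity on the unit sphere of $K$ is exactly the absence of a $(1,2)$-flex; homogeneity then lets an ordinary Taylor--Peano estimate dominate the remainder, which is precisely what fails for the inhomogeneous truncation $f_4$ in the paper's cautionary example, and it is also the step left open in \cite{hk} -- your reduction supplies it. What each approach buys: the paper's route proves the statement for every stiff-bar energy simultaneously, yields the sharper rigidity-order statement (Theorem~\ref{thm:main}), and produces a reusable general fourth-derivative test that also powers the $\dim(K)=1$ analysis; your route is shorter and more self-contained for this one theorem, avoids the coverage argument entirely, and makes the least-squares structure behind second-order rigidity explicit. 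The points you leave implicit -- that $\u^*(\v)$ is the unique minimizer of $E$ in the $\overline{K}$-slice over a uniform neighborhood (so strict minimality of the reduced function transfers back to $E$), that the $t^4$ coefficient of $F(t\v)$ depends only on the quadratic coefficient $\w(\v)$ of $\u^*$ and not on higher corrections, and that $c$ is continuous and homogeneous of degree $4$ -- are all standard and check out.
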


In this paper, we will give a new proof of Theorem~\ref{thm:2or}
by analyzing critical points of
a physically-motivated energy function. It will turn out that
the critical point in question will be degenerate,
so we develop a  novel general
4th derivative test that can certify a strict local
minimum.  Certifying rigidity using the new
$4$th derivative test is equivalent to ruling out
a $(1,2)$-flex, making the connection to second-order
rigidity.
We will actually prove a more precise statement than
Theorem \ref{thm:2or}, namely Theorem~\ref{thm:main} below.

\begin{remark}
  \label{rem:2comp}
  If 
  $\p+ \a^{(1)}t + \a^{(2)}t^2$
  is a $(1,2)$-flex and $\p+\b^{(1)} t$ is
  a $(1,1)$-flex, then
  $\p+ \a^{(1)}t + (\a^{(2)}+\b^{(1)})t^2$
  will also be a $(1,2)$-flex~\cite[Prop. 4.1.2]{pss}
  Hence, a framework has a $(1,2)$-flex if
  and only if it has one with $\a^{(2)}\in \overline{K}$.
\end{remark}

\subsubsection{Higher order rigidity}
The rigidity tests described above for first order rigidity
second order rigidity are the classical ones. Moreover, they nest nicely:
first order rigidity
implies second order rigidity.
It has, however, long been known
that there are frameworks that are rigid but not second order
rigid.  Hence, there has been interest in defining a notion
of ``higher order rigidity'' using flexes with the property that
every rigid framework is ``rigid at some order'' and that rigidity at
``lower order''
implies rigidity at ``higher order''.

In \cite{connelly-second} the following
natural proposal is discussed: a framework $(G,\p)$ is ``$k$th-order rigid'' if
it does not have $(1,k)$-flex.  It is left as a question in
\cite{connelly-second}
whether a framework that is $k$th-order rigid, in this sense, must be rigid.
In \cite{conSer}, a negative answer is given, in the form of a framework that
is ``$3$rd-order rigid'', but not rigid.
The example in \cite{conSer} is the
so-called ``double-Watt mechanism'', which has a cusp in its configuration
space.  The precise statement (translated to our terminology) is as follows.
\begin{theorem}[{\cite{conSer}}]\label{thm:3or}
  There exists a framework $(G,\p)$ that
  has a $(2,\infty)$-flex but no $(1,3)$-flex.
\end{theorem}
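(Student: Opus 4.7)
The plan is to exhibit the double-Watt mechanism of Connelly–Servatius explicitly and verify its two properties directly. The key idea is to build a framework whose pinned configuration space has, at $\p$, a cuspidal singularity that is locally analytically equivalent to $\{(x,y) : y^2 = x^3\}$ (in the $\overline K$ directions, modulo trivial first-order flexes in $K$). A semicubical cusp admits the smooth analytic parameterization $t \mapsto (t^2, t^3)$, which will yield the $(2,\infty)$-flex; and its defining equation admits no formal power series solution with non-zero linear term, which will provide the obstruction to a $(1,3)$-flex.

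First I would construct the framework. Take two Watt four-bar linkages sharing a common vertex $v_0$, each positioned at the symmetric configuration where the midpoint of its coupler bar lies at an inflection of its coupler curve. Orient the two linkages so that the two coupler-curve tangents at $v_0$ coincide, while their higher-order Taylor coefficients differ in a controlled way. With appropriate choices of arm lengths and pivot separations, the simultaneous edge-length constraints of the two linkages, together with the pinning, cut out a variety whose local analytic model near $\p$ is a semicubical cusp. The essential input is the classical plane-geometry fact that a Watt coupler curve has a cubic inflection with a tunable cubic coefficient, which gives enough freedom to match the two curves at linear and quadratic order while producing a genuine cusp at cubic order.

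The $(2,\infty)$-flex then follows by parameterizing the analytic branch of the cusp as $\p(t) = \p + \p^{(2)} t^2 + \p^{(3)} t^3 + (\text{higher order})$ with $\p^{(2)}, \p^{(3)} \neq 0$, which gives $\m(\p(t)) = \m(\p)$ identically, so $\p(t)$ is $2$-active and $\m(\p(t))$ is $\infty$-vanishing. To rule out a $(1,3)$-flex, suppose for contradiction that $\p(t) = \p + \p' t + \p'' t^2 + \p''' t^3 + \cdots$ with $\p' \neq 0$ satisfies the first three $(1,k)$-flex equations displayed after Remark~\ref{rem:trunc}. The first of these forces $\p' \in K$; using Remark~\ref{rem:2comp} and its natural extension to third order, I would normalize $\p''$ and $\p'''$ into $\overline K$. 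Projecting the constraint system onto the local cuspidal coordinates $(x,y)$ shows that any such trajectory must satisfy $y(t)^2 \equiv x(t)^3 \pmod{t^4}$; matching coefficients at $t^2$ forces the linear coefficient of $y$ to vanish, and then matching at $t^3$ forces the linear coefficient of $x$ to vanish as well. A check of the remaining directions in $K$ (which for the double-Watt are limited, as internal flexes of each individual Watt that do not move $v_0$ are resolved by the pinning) then shows $\p' = 0$, the required contradiction.

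The main obstacle is the first step: verifying that the joint configuration space at $\p$ really is a semicubical cusp rather than a node, a pair of smooth branches, or a higher-order singularity. This requires a careful choice of parameters in the two Watt linkages and a direct computation with their coupler curves, and it constitutes the technical heart of the original Connelly–Servatius construction.
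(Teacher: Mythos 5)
The paper does not prove Theorem~\ref{thm:3or} at all---it is quoted from Connelly--Servatius---so you are attempting an independent reconstruction, and as written it has two genuine gaps. First, the construction itself. You propose two Watt linkages whose (smooth) coupler curves pass through the shared vertex at an inflection, tangent to first and second order but differing at third order. Locally each linkage's configuration is parameterized by its coupler curve, so the configuration space of the combined framework is essentially the fiber product of the two curves; if both curves are smooth immersed arcs that agree only to finite order, that fiber product is an isolated point, i.e.\ the framework is \emph{rigid} near $\p$ and has no $(2,\infty)$-flex. What makes the double-Watt a cusp mechanism is quite different: the shared vertex is placed at a position of instantaneously zero velocity for each linkage (the instant center of the coupler), which is why each linkage moving alone gives a first-order flex (so $\dim K=2$) and why the traced curves, and ultimately the configuration-space germ, acquire the cusp. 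You acknowledge that verifying the cusp is ``the technical heart,'' but the sketch as stated points at a configuration that would not even be flexible, so the deferral is not merely a matter of bookkeeping.

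Second, and more fundamentally, the step ruling out a $(1,3)$-flex does not follow from the local model of the configuration space. A $(1,3)$-flex is a degree-$3$ jet that annihilates the actual quadratic edge-length functions $\m(\q)-\m(\p)$ modulo $t^4$; these functions generate, in general, a strictly smaller ideal than the full ideal of the configuration-space germ, so a jet can kill the edge constraints to order $3$ without killing a reduced local equation such as $y^2-x^3$ to order $3$. Your computation that $y^2=x^3$ admits no formal solution with nonzero linear term is correct for the plane cusp, but ``projecting the constraint system onto local cuspidal coordinates'' transfers the problem in the wrong direction: nonexistence for the reduced model does not imply nonexistence for the original quadrics. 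Indeed, a rigid framework whose configuration space is locally the single point $\{\p\}$ can perfectly well possess a $(1,3)$-flex (any second-order-flexible but rigid framework does), so no argument that uses only the zero-set germ can establish the ``no $(1,3)$-flex'' half of Theorem~\ref{thm:3or}. One must work order by order with the edge equations themselves---which is exactly what the Connelly--Servatius computation does, and what the cited reference supplies.
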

Below, we define the rigidity order of a framework $(G,\p)$.  Our
definition has property that a framework $(G,\p)$ is rigid if
and only if it has a finite rigidity order.  Necessarily,
the rigidity order of  $(G,\p)$ being equal to $k$ is not
characterized by $(G,\p)$ not having a $(1,k)$-flex.  The
two concepts are different.

However, there
is a special case in which our rigidity order coincides with
the notion of $k$th-order rigidity as described in \cite{connelly-second}.
The following result of V. Alexandrov \cite{alexImp} states that,
if $\dim(K) = 1$, ruling out a $(1,k)$-flex {\em does} imply rigidity.
\begin{theorem}[{\cite[Theorem 10]{alexImp}}]
  \label{thm:alex}
  Let $(G,\p)$ be a framework such that
  $\dim(K)=1$.
  If there exists a $k$ such that the $(G,\p)$ has no $(1,k)$-flex,
  then $(G,\p)$ is rigid.
\end{theorem}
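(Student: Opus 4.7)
Plan: I would prove the contrapositive---assume $(G,\p)$ is flexible, and construct a $(1,k)$-flex for every $k\ge 1$, contradicting the hypothesis. By Gluck's theorem cited in the introduction, flexibility yields an analytic non-constant finite flex $\p(t)$ with $\m(\p(t))\equiv \m(\p)$. The role of the hypothesis $\dim(K)=1$ is to reduce the problem, via a splitting-lemma argument, to a one-variable question that can be controlled to all orders.

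Introduce the real-analytic energy $E(\q):=\sum_{ij\in G}(m_{ij}(\q)-m_{ij}(\p))^2$. Then $E\ge 0$, $E(\p)=0$, $\nabla E(\p)=\0$, and a direct computation gives $\nabla^2 E(\p)=2\,R(\p)^{\top}R(\p)$, so the Hessian is positive semidefinite with kernel exactly $K$ and hence has nullity one. Apply the smooth Gromoll--Meyer splitting lemma to obtain a local diffeomorphism $\Phi$ between neighborhoods of the origin and $\p$ in pinned configuration space with
\[
E\circ\Phi(x,y) = g(x) + Q(y),
\]
where $x\in\RR$, $y$ parameterizes a complement of $K$, $Q$ is positive definite, and $g$ is smooth with $g(0)=g'(0)=g''(0)=0$. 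I choose $\Phi$ so that $d\Phi(0)$ sends the $x$-axis onto $K$; the curve $\gamma(x):=\Phi(x,0)$ then satisfies $\gamma(0)=\p$ and $\gamma'(0)\in K\setminus\{0\}$. Note $g(x)=E(\gamma(x))\ge 0$ near $0$.

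Next, push the finite flex through the splitting. Write $\Phi^{-1}(\p(t))=(x(t),y(t))$, smooth in $t$. The identity $E(\p(t))\equiv 0$ becomes $g(x(t))+Q(y(t))\equiv 0$, and non-negativity of both summands forces $g(x(t))\equiv 0$ and $y(t)\equiv 0$; thus $\p(t)=\gamma(x(t))$. The crucial step is to show $x(t)$ is \emph{not flat} at $t=0$: since $\p(t)-\p$ is analytic and nonzero it cannot be flat, whereas $\gamma(x)-\p=\gamma'(0)\,x+O(x^2)$, so a flat $x(t)$ would force $\p(t)-\p$ to be flat. Hence $x(t)=c\,t^m+O(t^{m+1})$ for some finite $m$ and $c\ne 0$, so the image of $x(t)$ on a sufficiently small $[0,\eps]$ contains a one-sided interval at $0$. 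Vanishing of $g$ on such an interval forces every derivative of $g$ at $0$ to vanish, i.e.\ $g$ is flat at $0$.

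Finally, transfer the flatness back to the edge-length constraints. From flatness of $g$ at $0$, $E(\gamma(x))=o(x^N)$ for every $N$; since each summand $(m_{ij}(\gamma(x))-m_{ij}(\p))^2$ is non-negative and they sum to $E(\gamma(x))$, each $m_{ij}(\gamma(x))-m_{ij}(\p)$ is individually flat at $0$. For arbitrary $k$, let $\gamma_k$ be the degree-$k$ Taylor polynomial of $\gamma$; then $\gamma_k(t)=\gamma(t)+O(t^{k+1})$ yields $\m(\gamma_k(t))=\m(\p)+O(t^{k+1})$, and $\gamma_k'(0)=\gamma'(0)\in K\setminus\{0\}$, so $\gamma_k$ is a $(1,k)$-flex, contradicting the hypothesis. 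I expect the main obstacle to be the flatness step: the one-dimensionality of $K$ is essential here, since only for a one-variable reduced function $g$ does vanishing along a single analytic parameterization $x(t)$ force $g$ to be flat at $0$; in higher nullity the reduced $g$ lives on the multi-dimensional $K$ and a single analytic curve is too small a test set, as witnessed by the Connelly--Servatius cusp mechanism of Theorem~\ref{thm:3or}.
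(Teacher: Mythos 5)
Your proposal is correct, but it takes a genuinely different route from the paper. You prove the contrapositive: starting from an analytic finite flex (Gluck), you apply a Gromoll--Meyer splitting of the algebraic energy as $g(x)+Q(y)$ with the one-dimensional $x$-direction along $K$, push the flex through the splitting to force $y(t)\equiv 0$ and $g\equiv 0$ on a one-sided interval (this is exactly where $\dim(K)=1$ enters, as you note), conclude that $g$ and hence each $m_{ij}(\gamma(x))-m_{ij}(\p)$ is flat along the canonical curve $\gamma$ tangent to $K$, and truncate $\gamma$ to produce a $(1,k)$-flex for every $k$. The paper instead proves the stronger Theorem~\ref{thm:main2} directly, with no change of variables: it builds explicit families of test trajectories from a $(1,k-1)$-flex, shows they are indicative at order $2k$ (Lemmas~\ref{lem:standInd} and~\ref{lem:isactiveK}), and invokes the derivative-test machinery of Proposition~\ref{prop:indic} together with Theorem~\ref{thm:coin}; your splitting step is much closer in spirit to Cushing's change-of-variables reduction, which the paper discusses in Remark~\ref{rem:genCush} but deliberately avoids. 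What each buys: the paper's route yields the quantitative statement that every stiff-bar energy grows always-$2k$-quickly, hence pins down the rigidity order, which your contrapositive argument does not give; your route is shorter, isolates the role of $\dim(K)=1$ very cleanly, and shows the nice structural fact that a flexible framework with $\dim(K)=1$ has $(1,k)$-flexes for all $k$, all obtained by truncating a single smooth curve tangent to $K$. Two small points to tighten: you need the analytic finite flex in the \emph{pinned} setting (either curve selection applied to the pinned equivalence set, or pin an unpinned flex via Lemma~\ref{lem:isoTraj} and Proposition~\ref{prop: jk-flex change pinning}); and the requirement that $d\Phi(0)$ carry the $x$-axis into $K$ needs no special choice of $\Phi$, since $g''(0)=0$ and invertibility of $d\Phi(0)$ already force $d\Phi(0)e_x\in\ker H_E(\p)=K$.
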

(There is no contradiction with Theorem~\ref{thm:3or}
  as the double-Watt mechanism described in \cite{conSer} has
$\dim(K) = 2$.)

In this paper, we will give a new proof of  Theorem~\ref{thm:alex}.  Like
our new proof of Theorem \ref{thm:2or}, the approach is to
analyze critical points of an energy.
We will actually prove the more precise statement than Theorem \ref{thm:alex},
namely Theorem~\ref{thm:main2} below.
For what follows, we will need this next
lemma, which is similar to Remark~\ref{rem:2comp}.
This result is also in
~\cite[Section 3.4]{gar} and
~\cite{alexImp}.

\begin{lemma}
  \label{lem:kcomp}
  Given a 1-active trajectory $\q(s)$ with $\q(0)=\p$, let $Z = \mbox{span}(\q')$, and let $\overline{Z}$ be a space complementary to $Z$. Then, for each integer $k$, there is an analytic reparameterization $\p(t) = \q(s(t))$ with $s'(0)\neq 0$ such that $\p'\in Z$ and
    $\p\me{j}\in \overline{Z}$ for $2\leq j\leq k$. Furthermore, for fixed $s'(0)$, the reparameterization is unique. 
\end{lemma}

The following corollary immediately follows. 
\begin{corollary}
    If $\mbox{dim}(K)=1$, then any $(1,k)$ flex 
    $\q(s)=\p+\q's+\frac{1}{2!}\q''s^2 + \cdots$ 
    can be analytically reparameterized into the form
    $\p(t)=\p+\p't+\frac{1}{2!}\p''t^2 + \cdots$ 
    with $\p\me{j}\in \bar{K}$ for $2\leq j\leq k$.
    Furthermore, for fixed $\p'$, the reparameterization is unique. 
\end{corollary}

The proof of Lemma \ref{lem:kcomp} proceeds by solving for the derivatives of $s(t)$ at $t=0$ using the chain rule.  Notice that if we look for such a reparameterization $s(t)$, then the chain rule implies 
$\p' = \q's'.$
Hence, if we choose $s'=1$, then $\p'=\q'$. 
After two iterations of the chain rule we have
\[
\p'' = \q''(s')^2 + \q's''. 
\]
We know that $\q'' = \lambda \q' + \w_1$ for some $\w_1 \in \bar Z$. Hence, if we choose $s'' = -\lambda$, then we'll have $\p''\in \bar Z$. Furthermore, this is the unique such choice of $s''$. After three iterations of the chain rule we have
\[
\p''' = \q'''(s')^3 + 3\q''s's'' + \q's'''.
\]
We could proceed by choosing $s'''$ to compensate the component of $\q'''+ 3\q''s''$ that lies in $Z$. However, the presentation is easier by assuming we already have a trajectory with $\q''\in \bar Z$ (obtained for example by the reparameterization at the second step), and then we only have to choose $s'''$ to compensate the $\q'''$ term. In this second version, we construct $s(t)$ by iterating the parameterizations obtained at each step. 

\begin{proof}\emph{(Proof of Lemma \ref{lem:kcomp})}
    We show the following statement: suppose $\q(s)$ satisfies $\q\me{j}\in \bar Z$ for $2\leq j\leq k-1$. Then there is an analytic reparameterization of the form $s(t) = t + \frac{1}{k!}s\me{k}t^k$ such that $\p\me{j}\in \bar Z$ for $2\leq j\leq k$. The statement of the Lemma then follows by iteratively applying reparameterizations, up to the desired value of $k$. The proof uses a general formula for multiple iterations of the chain rule, called Fa\`{a} di Bruno's formula, which is presented and discussed in more detail later (Eqn. \eqref{eq:faa}).

    To show the statement, we look for a parameterization of the form $s(t) = t + \frac{1}{k!}s\me{k}t^k$ for some coefficient $s\me{k}$. First note that 
    because $s'=1$ and $s\me{j}=0$ for $2\leq j\leq k-1$, and using
    \eqref{eq:faa}, we have $\p\me{j}=\q\me{j}$ for $2\leq j\leq k-1$. Consider $\p\me{k}$. By \eqref{eq:faa} we also have,
    \[
   \p\me{k} = \q\me{k}(s')^k + \q's\me{k}
    = \q\me{k} + \q's\me{k}. 
    \]
    We know that $\q\me{k} = \lambda_k\q' + \w_k$ for some unique number $\lambda_k\in \R$ and some unique vector $\w_k\in \bar Z$. 
    Therefore $\q\me{k} = (\lambda_k-s\me{k})\q' + \w_k$.
    Choose $s\me{k} = -\lambda_k$ to get $\p\me{k} = \w_k\in \bar Z$. By the uniqueness of the decomposition of $\q\me{k}$, the choice of $s\me{k}$ is unique. 

    The uniqueness of the overall parameterization follows because every parameterization can be constructed in this iterative way, and we have shown that each step of the iteration is uniquely defined. 
\end{proof}

A consequence of Theorem \ref{thm:alex} (and Lemma~\ref{lem:kcomp})
is that, when $\dim(K)=1$,
certifying rigidity by ruling out a $(1,k)$-flex for a fixed $k$
can be done efficiently.
This is because, when $\dim(K) = 1$, for any fixed $k$, we can determine
whether there is a $(1,k)$-flex by iteratively solving a sequence of $k$ linear systems.

To see why, let us first
consider the
general problem of finding a $(1,2)$-flex.  The requires solving
for $\p'$ and $\p''$,
in the equations
\[
  (\p_v - \p_w)\cdot (\p'_v - \p'_w) = 0 \qquad
  (\p_v - \p_w)\cdot (\p''_v - \p''_w)
  =
  -(\p'_v - \p'_w)\cdot (\p'_v - \p'_w),
\]
for all edges $vw$ of $G$.  This is a nonlinear problem, and there is no known
efficient algorithm.  However, in the special case that $\dim(K) =
1$, the solutions
for $\p'$ are scalar multiples of each other.
Meanwhile,  $\p(t) = \p + \p't + \frac{1}{2}\p''t^2$
is a $(1,2)$-flex of $(G,\p)$ if and only if, for any non-zero scalar $\alpha$,
$\q(t) = \p + \alpha\p't + \alpha^2\frac{1}{2}\p''t^2$ is also a $(1,2)$-flex.
Thus we can fix
a scale (and signs) for $\p'$ at will.

This means we can
easily test for the existence of a $(1,2)$-flex: first solve a linear system
to find any $(1,1)$ flex $\p'$; then fix $\p'$ and try to solve the
linear  system
\[
  (\p_v - \p_w)\cdot (\p''_v - \p''_w) =     -(\p'_v - \p'_w)\cdot
  (\p'_v - \p'_w)
  \qquad \text{for all edges $vw$ of $G$}.
\]
This is linear as the right hand side has been fixed.
As discussed above, there is a solution iff there is a $(1,2)$-flex,
since they are
all related by scaling.
Meanwhile note that $K$ is the kernel
of the coefficient matrix of the second linear
system, which is the same as the one used to
define $(1,1)$ flexes\footnote{Readers familiar with rigidity theory
will recognize it as the rigidity matrix of $(G,\p)$.}.
Thus when there is a solution, it has a unique solution
in $\overline{K}$.

The algorithm for $k\ge 3$ is to continue the pattern.  Suppose that we have
found a $(1,k)$-flex, $\p(t) = \p + \p' t + \cdots + \frac{1}{k!}\p^{(k)}$ with the
property that $\p^{(i)}\in \overline{K}$ for $2\le i\le k$.  With these
choices fixed, the system \eqref{eq:flex} for $k+1$ is linear system
in $\p^{(k+1)}$,
namely
\[
  (\p_v - \p_w)\cdot (\p^{(k+1)}_v - \p^{(k+1)}_w)
  =-\frac{1}{2}\sum_{a=1}^{l} \binom{l}{a}
  ({\p}^{(a)}_v-{\p}^{(a)}_w)\cdot({\p}^{(l-a)}_v-{\p}^{(l-a)}_w) =0
  \qquad \text{for all $1\le l\le k$},
\]
(where the kernel of the coefficient matrix is still $K$).
If there is a solution, we get
a unique solution in $\overline{K}$ (see also Lemma~\ref{lem:kcomp}).
If there is no solution in $\overline{K}$, then
from Lemma~\ref{lem:kcomp}
we get a certificate that there is no
$(1,k+1)$-flex.

By induction, using Lemma \ref{lem:kcomp} and the preceding argument,
once we fix $\p'$, if there is a $(1,k)$-flex, we can find it 
uniquely by searching
for $\p^{(i)}$ in $\overline{K}$ one step at a time,  without having to back up.
Combining the algorithm with Theorem \ref{thm:alex}, we get:
\begin{theorem}
  \label{thm:eff}
  Let $(G,\p)$ be a framework with
  $\dim(K)=1$.  For any fixed $k\in \mathbb{N}$, there is an
  efficient algorithm that either certifies that $(G,\p)$ is
  rigid or that it has a $(1,k)$-flex.
\end{theorem}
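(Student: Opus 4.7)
The plan is to formalize the iterative linear-algebra procedure sketched in the discussion preceding the theorem, and verify that at each step one of three outcomes is reached in polynomial time: output a $(1,k)$-flex, output ``rigid,'' or pass to the next stage.

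First, I would compute the kernel $K$ of the pinned rigidity matrix $R(\p)$ by Gaussian elimination. Since $\dim(K)=1$ by hypothesis, any nonzero $\p'\in K$ is unique up to scaling; the algorithm selects such a $\p'$ and, using for instance the row-echelon structure of $R(\p)$, a complementary subspace $\overline{K}$ in pinned configuration space. The partial flex is initialized as $\p + \p't$, which is a $(1,1)$-flex and supplies the base case of the induction.

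Next, I would run an induction on $l=2,\dots,k$. Suppose coefficients $\p^{(2)},\dots,\p^{(l-1)}\in\overline{K}$ have already been constructed so that $\p+\p' t+\sum_{i=2}^{l-1}\p^{(i)}t^i$ is a $(1,l-1)$-flex. The order-$l$ equation in \eqref{eq:flex} is then a linear system in the unknown $\p^{(l)}$ whose coefficient matrix is precisely $R(\p)$ and whose right-hand side is a fixed vector computed from the previously stored coefficients. I solve this system. Either there is no solution, or, since $\ker R(\p) = K$, there is a unique solution with $\p^{(l)}\in\overline{K}$. In the no-solution case, Lemma~\ref{lem:kcomp} guarantees that no $(1,l)$-flex of $(G,\p)$ exists; since every $(1,k)$-flex is in particular a $(1,l)$-flex for every $l\le k$, no $(1,k)$-flex exists, and Theorem~\ref{thm:alex} then certifies that $(G,\p)$ is rigid, which the algorithm reports. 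Otherwise the partial flex is extended to order $l$ and the induction continues. If it completes all $k$ stages, the algorithm outputs the $(1,k)$-flex it has explicitly constructed.

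Correctness hinges on a single non-trivial input: the equivalence between ``failure to extend inside $\overline{K}$'' and ``no $(1,l)$-flex exists,'' which is exactly the content of Lemma~\ref{lem:kcomp} (this is the step that uses $\dim(K)=1$ in an essential way). Everything else is routine linear algebra: each of the $k$ stages solves one linear system of size polynomial in $|V(G)|$ and $|E(G)|$, and the right-hand side at stage $l$ is a sum of $O(l)$ quadratic expressions in previously stored coefficients, giving polynomial running time for each fixed $k$. I do not anticipate any serious obstacle; the main point is simply to check that $\overline{K}$ can be fixed once and for all and that the induction's ``bookkeeping'' lines up with \eqref{eq:flex} and Lemma~\ref{lem:kcomp}.
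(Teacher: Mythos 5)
Your proposal is correct and follows essentially the same route as the paper: compute $K$, fix $\p'$ spanning it, iteratively solve the linear systems \eqref{eq:flex} for $\p^{(l)}\in\overline{K}$ (unique when solvable, since $\ker R(\p)=K$), use Lemma~\ref{lem:kcomp} to conclude that a failed extension rules out any $(1,l)$-flex, and invoke Theorem~\ref{thm:alex} to certify rigidity. The only detail worth making explicit, as the paper does, is the scaling invariance $\p+\alpha\p't+\alpha^2\p''t^2+\cdots$ that justifies fixing the scale (and sign) of $\p'$ once and for all, so that "no backing up" is legitimate.
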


We don't know in advance which $k$ to pick, so we don't get an
algorithm that efficiently decides rigidity, even in this special
case.  However,
the results of this paper indicate that, if, for large $k$,
$(G,\p)$ has a $(1,k)$-flex, it will feel very ``soft''.

\section{Rigidity and energy}
\label{sec:enOr}

Energy formulations have a long history of
being used to study the
rigidity of structures (see e.g. ~\cite{koiter,salerno,gar}).

\begin{definition}
  \label{def:energy}
  Let $(G,\p)$ be a lead-spanning and pinned framework.
  For each edge $ij \in G$,
  let $E_{ij}(l) : \RR\to \RR$ be a univariate
  function. We define the \defn{edge-based
  energy}
  \ba
  E(\q) := \sum_{ij \in G} E_{ij}(l_{ij}(\q))
  \ea
  where $\q$ is a pinned configuration.

  We say that an edge-based
  energy $E(\q)$ is \defn{bar-like}
  at $\p$,
  if: for each edge $ij\in E(G)$,
  $$d_{ij} := |\p_i - \p_j| \neq 0;$$
  each
  $E_{ij}$ is analytic at $d_{ij}$; and each $E_{ij}$
  has a strict local minimum at $d_{ij}$.  These conditions
  imply that
  \ba
  \frac{d}{dl}E_{ij}|_{l=d_{ij}}&=&0.
  \ea

  We say that an edge-based energy $E(\q)$
  is a \defn{stiff-bar energy} at $\p$,
  if $E$ is bar-like at $\p$ and, in addition, each
  $E_{ij}$ has positive curvature at $d_{ij}$;
  i.e.,
  \ba
  \frac{d^2}{dl^2}E_{ij}|_{l=d_{ij}}&>&0.
  \ea
  When clear from the context, we will often drop the
  ``at $\p$'' phrase.
\end{definition}

\begin{remark}
  Because $l_{ij}(\q)$ involves a square root,
  we need to assume that $(G,\p)$ has no zero length edges, in order
  for a stiff-bar energy $E(\q)$ to be analytic at $\p$.
\end{remark}

Here are some examples of stiff bar energies:
\begin{enumerate}
  \item \emph{Harmonic spring potential.} This energy treats each
    edge as a harmonic spring, with spring constant $k_{ij} > 0$ and
    rest length $d_{ij} > 0$:
    \[ E_{ij}(l) = \frac{1}{2}k_{ij}(l-d_{ij})^2.
    \]
  \item \emph{Algebraic energy.} This energy avoids the square root
    needed in computing $l_{ij}(\q)$:
    \[ E_{ij}(l) = \frac{1}{2}k_{ij}(l^2-d_{ij}^2)^2.
    \]
  \item \emph{Lennard-Jones potential.} This energy is commonly used
    in molecular dynamics simulations to model interactions between
    atoms \cite{LJ,LJnew}. The parameters $\sigma_{ij},\epsilon_{ij}$
    govern the location of the minimum, and the depth of the
    potential, respectively:
    \[
      E_{ij}(l) = 4\epsilon_{ij} \left(
        \left(\frac{\sigma_{ij}}{l}\right)^{12} -
      \left(\frac{\sigma_{ij}}{l}\right)^6\right).
    \]
  \item \emph{Morse potential.} This potential is used to model
    interactions between bodies with very short-ranged attractive
    interactions \cite{morse,morse2}. The parameter $d_{ij}$ is the
    location of the minimum, $D_{ij}$ is the depth, and $a_{ij}$
    governs the range of the attractive part of the energy:
    \[ E_{ij}(l) = D_{ij}(1-e^{-a_{ij}(l-d_{ij})})^2.\]
\end{enumerate}

\begin{remark}
In this paper, we  define our stiff bar
energies using terms of the form
$E_{ij}(l_{ij}(\q))$, i.e., functions of lengths.
We do this since many energies in various applications domains are
typically written this way.  We could just
as easily replace these terms  with functions
of squared lengths instead.
In particular, if we let 
$F_{ij}(m) := E_{ij}(\sqrt{m})$ then 
$E_{ij}(l_{ij}(\q))=F_{ij}(m_{ij}(\q))$.
And if $E_{ij}$ has a strict local minimum
with positive curvature at $d_{ij}$
(which is non-zero by assumption),
then $F_{ij}$ will have a strict local minimum
with positive curvature at $d^2_{ij}$ (see e.g., Remark~\ref{rem:writtenOut}).
\end{remark}

Let us record a simple connection between bar-like energies and (pinned) rigidity.  We
give a proof for completeness.
\begin{theorem}\label{thm: E min and rigidity}
  Let $E$ be an energy that is bar-like at $\p$.  Then
  $\p$ is a local minimum of $E$.  Moreover,
  a framework $(G,\p)$ is rigid if and only if $\p$
  is a strict local minimum of $E$.
\end{theorem}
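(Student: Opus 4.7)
The plan is to deduce both statements directly from the univariate strict-local-minimum property of each $E_{ij}$, using continuity of the length functions $l_{ij}$ to pass between neighborhoods in configuration space and neighborhoods on the real line. The argument is essentially bookkeeping, and there is no genuine obstacle; the only point requiring modest care is choosing a single neighborhood of $\p$ on which all the per-edge strict-minimum neighborhoods are simultaneously active.

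For the first claim (that $\p$ is a local minimum), I would proceed edgewise. For each edge $ij\in G$, bar-likeness furnishes an open interval $I_{ij}\ni d_{ij}$ on which $E_{ij}(l)\ge E_{ij}(d_{ij})$. Since $l_{ij}(\q)$ is continuous in $\q$ at $\p$ (using the hypothesis $d_{ij}\ne 0$, which ensures $l_{ij}$ is even analytic there), there is a neighborhood $U$ of $\p$ in $\ell$-pinned configuration space on which $l_{ij}(\q)\in I_{ij}$ for every edge $ij$ simultaneously. Summing the per-edge inequalities over $ij\in G$ gives $E(\q)\ge E(\p)$ on $U$.

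For the ``rigid $\Rightarrow$ strict local minimum'' direction, I would shrink $U$ further so that additionally $(G,\p)$ satisfies the defining property of pinned rigidity on $U$. Take any $\q\in U$ with $\q\ne \p$. Pinned rigidity forces $(G,\q)$ not to be equivalent to $(G,\p)$, so there exists at least one edge $i^\ast j^\ast$ with $l_{i^\ast j^\ast}(\q)\ne d_{i^\ast j^\ast}$. On that edge, the \emph{strict} local minimum of $E_{i^\ast j^\ast}$ yields $E_{i^\ast j^\ast}(l_{i^\ast j^\ast}(\q))>E_{i^\ast j^\ast}(d_{i^\ast j^\ast})$, while on every other edge we retain the non-strict inequality from the first paragraph. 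Summing gives $E(\q)>E(\p)$.

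For the converse, I would argue by contrapositive: assume $(G,\p)$ is pinned flexible. Then for every neighborhood $U$ of $\p$ there is a pinned $\q\in U$ with $\q\ne\p$ and $l_{ij}(\q)=d_{ij}$ for all $ij\in G$. For such $\q$, each summand of $E(\q)$ equals the corresponding summand of $E(\p)$, so $E(\q)=E(\p)$. Hence $\p$ is not a strict local minimum. This completes the equivalence, and combined with Proposition~\ref{prop: pinned rigid} it translates at once into the unpinned setting used in the rest of the paper.
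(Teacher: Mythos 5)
Your proposal is correct and follows essentially the same route as the paper's proof: both compare the energy edge-by-edge on a common neighborhood, use the strict local minimum of each $E_{ij}$ to identify $E(\q)=E(\p)$ with $(G,\q)$ being equivalent to $(G,\p)$, and then read off the equivalence between strict minimality and (pinned) rigidity. Splitting the equivalence into the two explicit directions is only a presentational difference from the paper's argument.
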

\begin{proof}
  Fix an edge $ij\in E(G)$.  Because $E$ is bar-like, there is a
  neighborhood $U\in \p$ such that,
  for all $\q \in U$, $E_{ij}(\q)\ge E_{ij}(\p)$.  Comparing $E(\p)$
  and $E(\q)$ term-by-term
  shows that, for all $\q\in U$, $E(\q) \ge E(\p)$.  This shows that
  $\p$ is a local minimum of
  $E$.

  For the second part, we observe that, for the same neighborhood
  $U$, $E(\q) = E(\p)$ if and only
  if $E_{ij}(\q) = E_{ij}(\p)$ for all edges $ij\in E(G)$.  Because
  $E_{ij}$ is a function of
  edge length that has a local minium at the edge lengths of
  $(G,\p)$, this implies that
  $E(\q) = E(\p)$ if and only if $(G,\q)$ has the same edge lengths
  as $(G,\p)$. The second
  part now follows: $\p$ is a strict local minimum if and only if,
  for every neighborhood $U\supseteq V\ni \p$,
  $E(\q) = E(\p)$ implies that $\q = \p$; by the previous discussion,
  this is equivalent to
  $(G,\q)$ and $(G,\p)$ having the same edges implying that $\p = \q$.
\end{proof}
In this statement, it is important that we are considering
pinned configurations.  Otherwise, $\p$ will never be a
strict local minimum of $E$, because $E$ is
invariant to congruences.

\begin{remark}
  When the energy $E$ is not bar-like, i.e.,
  $\frac{d}{dl}E_{ij}|_{l=d_{ij}} \neq0$
  for some edges $ij$, $\p$ is not necessarily a local minimum of $E$
  when $(G,\p)$ is rigid.  However, some energies that
  are not bar-like can also be used to prove rigidity.
  Using energies that are not bar-like to prove rigidity is
  related to the notion of prestress stability \cite{pss},
  and we extend our results incorporate non-bar-like energies
  in an upcoming companion paper~\cite{HOpre}.
\end{remark}

\subsection{Energy growth}
When a framework is rigid,  it can
be informative to study at what order an energy
changes as we move away from $\p$.
These next definitions formalize the
growth rate of $E$ as a function of distance from $\p$.
\begin{definition}
  Let $E$ be a function
  over (pinned) configuration space
  with
  a local minimum at $\p$.
  Let $s > 0$ be a real number.

  We say that $E$ grows \defn{always-$s$-quickly} at $\p$ if
  there is some $c_1>0$
  and a  neighborhood $U$ of $\p$
  so that for all
  $\q$
  in $U$, we have
  \ba
  E(\q) -E(\p) &\geq& c_1|\q-\p|^s .
  \ea

  We say that $E$ grows \defn{sometimes-$s$-slowly} at $\p$  if
  there exists a trajectory $\p(t)$ at $\p$,
  and constants $c_2,\delta>0$,
  such that for $t \in [0,\delta]$
  \ba
  E(\p(t)) -E(\p)&\leq& c_2|\p(t)-\p|^s .
  \ea
  We say that $E$ grows \defn{sometimes-$\infty$-slowly} at $\p$  if
  there exists a
  trajectory at $\p$,
  $\p(t)$ so that
  \ba
  E(\p(t)) -E(\p) =0 .
  \ea
  We say that $E$ grows \defn{$s$-tightly} at $\p$  if
  it grows sometimes-$s$-slowly and always-$s$-quickly at $\p$.
  In this case we say
  that $s$ is the \defn{tight growth order} for $E$ at $\p$.
  When clear from the context, we will often drop the
  ``at $\p$'' phrase.
\end{definition}
Since the definitions of growth involve trajectories, it is not
a priori clear whether a tight growth order exists or
what $s$ can be.  An important fact is that, for analytic
functions, the situation is very nice.

\begin{theorem}[\cite{netto}]\label{thm: netto}
  Let $E$ be a function $E : \mathbb{R}^N\to \mathbb{R}$ which is
  analytic at a local minimum
  at $\p$.  Then
  \begin{enumerate}[(1)]
    \item For every trajectory $\p(t)$ at $\p$, there is a  \defn{growth order}
      $s_\p\in \{\mathbb{Q},\infty\}$ of $E(\p(t))$, i.e., there are
      constants $c_1, c_2 > 0$ such that
      \[
        c_1|\p(t) - \p|^{s_\p} \le
        E(\p(t)) - E(\p)
        \le c_2 |\p(t) - \p|^{s_\p};
      \]
    \item If $\p$ is a strict local minimum, $E$ grows $s$-tightly
      for a rational number $s$,
      and there is a trajectory $\p(t)$ at $\p$ so that $E(\p(t))$
      has growth order $s$;
    \item If $\p$ is a weak local minimum, there is a trajectory
      $\p(t)$ at $\p$ along which
      $E(\p(t))$ is constant (i.e., has $s_\p = \infty$).
  \end{enumerate}
\end{theorem}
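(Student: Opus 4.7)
The plan is to tackle the three parts in sequence, with (1) and (3) being comparatively direct and (2) requiring the heavier machinery of Łojasiewicz-type theorems in real analytic geometry.

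For part (1), I would exploit the analyticity of both $\p(t)$ and $E$ to reduce to a Taylor coefficient calculation. Since the trajectory is non-constant by definition, write $\p(t) - \p = t^j \v(t)$ with $\v$ analytic and $\v(0) \neq 0$, so $|\p(t) - \p| = t^j |\v(t)| \sim |\v(0)|\, t^j$ as $t\to 0^+$. The composition $g(t) := E(\p(t)) - E(\p)$ is analytic with $g(0) = 0$; since $\p$ is a local minimum, $g(t) \geq 0$ for $t$ near $0$. Either $g \equiv 0$ (the $s_\p = \infty$ case), or $g(t) = a t^r + O(t^{r+1})$ for some smallest $r \geq 1$ and necessarily $a > 0$. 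Then $E(\p(t)) - E(\p) = \Theta(t^r) = \Theta(|\p(t) - \p|^{r/j})$, giving the rational growth exponent $s_\p = r/j$ with the required two-sided constants.

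For part (3), if $\p$ is not a strict local minimum, then the zero set $Z = \{\q \in U : E(\q) = E(\p)\}$ of the real analytic function $E - E(\p)$ has $\p$ as a non-isolated point. Milnor's curve selection lemma for real analytic sets then produces a non-constant real-analytic arc $\p(t)$ with $\p(0) = \p$ whose image lies in $Z$, giving a trajectory along which $E$ is constant, i.e., $s_\p = \infty$. For part (2), assuming $\p$ is a strict local minimum, I would first apply Łojasiewicz's inequality to the analytic functions $f = E - E(\p)$ and $g = |\q - \p|^2$: on a small neighborhood $U$ we have $g^{-1}(0) = \{\p\} \subseteq f^{-1}(0)$, so Łojasiewicz yields constants $c, N > 0$ with $|\q - \p|^{2N} \leq c\bigl(E(\q) - E(\p)\bigr)$ on $U$. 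This already shows $E$ grows always-$s$-quickly for $s = 2N$, so the Łojasiewicz exponent $\sigma := \inf\{s > 0 : E \text{ grows always-}s\text{-quickly at } \p\}$ is finite. I would then invoke the refined form of Łojasiewicz's theorem (see for example Bochnak--Coste--Roy, or Kurdyka--Mostowski) which says that this exponent is rational and is attained along a real-analytic arc through $\p$. Combined with part (1), that arc is a trajectory with growth order exactly $\sigma$, so $E$ grows sometimes-$\sigma$-slowly; combined with the Łojasiewicz bound, $E$ grows always-$\sigma$-quickly, certifying $\sigma$-tight growth.

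The main obstacle is the last ingredient in part (2): producing an analytic arc that actually realizes the Łojasiewicz exponent. One standard route is to apply the curve selection lemma to the semianalytic set where the ratio $\bigl(E(\q) - E(\p)\bigr)/|\q - \p|^\sigma$ is bounded above by a fixed small constant, and to use a Puiseux expansion of the trace of $E$ along the resulting arc to extract the rationality of $\sigma$ in tandem with the rational exponent produced by part (1). These tools are classical but nontrivial in real analytic geometry, and constitute the substance of what the citation [netto] provides; my proposal would cite them rather than re-derive them from scratch.
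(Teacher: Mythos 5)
This statement is not proved in the paper at all: Theorem~\ref{thm: netto} is imported verbatim from the reference \cite{netto} (Barone-Netto, Gorni, Zampieri), so there is no internal proof to compare your argument against; the paper only adds the remark that in case (2) the exponent $s$ is the leading exponent of the Puiseux series of $m(r):=\min_{|\q-\p|=r}\bigl(E(\q)-E(\p)\bigr)$. Judged on its own, your sketch is a sound reconstruction of the standard real-analytic-geometry proof. Parts (1) and (3) are correct and essentially complete: (1) is the elementary comparison of the leading exponents of the analytic functions $E(\p(t))-E(\p)$ and $|\p(t)-\p|$ (with positivity of the leading coefficient coming from the local-minimum hypothesis), and (3) is a direct application of the curve selection lemma to the zero set of $E-E(\p)$, of which $\p$ is a non-isolated point exactly when the minimum is not strict. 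For part (2) your architecture is the right one --- a Łojasiewicz inequality gives always-$s$-quick growth for some finite $s$, and the rationality and attainment of the optimal (Łojasiewicz) exponent, together with an analytic arc realizing it, give tight growth --- but note two points. First, a small slip: for the two-function Łojasiewicz inequality $|\q-\p|^{2N}\le c\,(E(\q)-E(\p))$ the hypothesis you need is $f^{-1}(0)\subseteq g^{-1}(0)$ with $f=E-E(\p)$ and $g=|\q-\p|^2$, i.e.\ that strict minimality forces $f^{-1}(0)=\{\p\}$ locally; you wrote the inclusion in the opposite (trivial) direction, though strictness makes the two zero sets coincide so nothing breaks. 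Second, the substantive content of (2) --- that the infimal exponent is rational, that the inequality holds with that exponent itself, and that it is realized along a real-analytic arc --- is exactly what you defer to Bochnak--Coste--Roy/Kurdyka--Mostowski-type results; since the paper itself defers the whole theorem to \cite{netto}, citing that machinery rather than re-deriving it is an acceptable resolution, but be aware that this deferred material is where essentially all of the work of the theorem lives.
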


In (2),
let $m(r)$, be the function that
expresses the minimal value of
$E(\q)-E(\p)$
as a function the radius $r:=|\q-\p|$.
Then
the quantity $s$ is simply the leading exponent in
the Puiseux series for $m(r)$ at $r=0$.
Some remarks are in order:
\begin{remark} \label{remark:netto}
  \begin{enumerate}[(1)]
    \item Every flexible framework has an analytic finite flex and
      hence $E$ grows sometimes-$\infty$-slowly at $\p$. So, the tight growth
      order of any bar-like energy $E$ at a framework $(G,\p)$ is finite if and
      only if $(G,\p)$ is rigid.
    \item Even if $E$ is a degree $d$ polynomial, $s$ can be much
      larger than $d$, and rigidity
      theory provides examples.
      In particular, the ``algebraic energy'' described in
      Definition~\ref{def:energy}
      is a degree $4$ polynomial,
      but, as we will show, at any framework with a $(1,k)$-flex,
      this energy grows sometimes-$2k$-slowly.
      There are such frameworks for unbounded $k$ \cite{leonardo}.
    \item The tight growth order $s$ of an analytic function $E$ at a
      strict local minimum $\p$
      does not need to be an integer \cite{netto}.
  \end{enumerate}
\end{remark}

We now introduce a notion that connects rigidity and
energy functions.
\begin{definition}\label{def: f-flex}
  Let $\f(\q)$
  be an  vector valued analytic function on (pinned) configuration
  space in some neighborhood of interest.
  Suppose $\p(t)$ is a $j$-active trajectory at $\p$ and
  $\f(\p(t))$ is $k$-vanishing.
  Then we say that $\p(t)$ is a
  \defn{$(j,k)$ $\f$-flex}.
\end{definition}

\begin{remark}
  Definition \ref{def: f-flex} generalizes the notion of a $(j,k)$-flex of a
  framework $(G,\p)$, since a $(j,k)$-flex may also be called a
  $(j,k)$ $\m$-flex.
\end{remark}
In this paper, we will be mainly be interested in
E-flexes, where $E(\q)$ is some stiff bar energy at $\p$. 

The following lemma 
gives us the connection between the existence of a
$(j,2k)$ E-flex and sometimes-slow growth.

\begin{lemma}
  \label{lem:eflex}
  Let $E$ be a function which is analytic at a local minimum $\p$, and suppose there exists
  a $(j,2k)$ E-flex
  $\p(t)$ at $\p$.
  Then E grows sometimes-s-slowly where $s=\frac{2k+2}{j}$.
\end{lemma}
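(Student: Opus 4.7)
The plan is to read off the leading-order Taylor behavior of $f(t) := E(\p(t)) - E(\p)$ and of $|\p(t) - \p|^s$ at $t = 0$ and then compare them. Since $\p(t)$ is $j$-active, $\p(t) = \p + \p^{(j)} t^j/j! + O(t^{j+1})$ with $\p^{(j)} \neq 0$, so $|\p(t) - \p| = (|\p^{(j)}|/j!)\, t^j(1 + O(t))$ for small $t > 0$. Raising to the power $s = (2k+2)/j$ gives $|\p(t) - \p|^s = C_1 t^{2k+2}(1 + O(t))$ with a constant $C_1 > 0$.

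Since $E$ is analytic at $\p$ and $\p(t)$ is analytic, $f$ is analytic in $t$ near $0$, and the $(j,2k)$ E-flex hypothesis says $f$ is $2k$-vanishing. Hence $f(t) = a t^d + O(t^{d+1})$ for some integer $d \geq 2k + 1$ and some $a \in \RR$; the case $f \equiv 0$ makes the conclusion trivial, so I may assume $a \neq 0$. The central step will be to improve this to $d \geq 2k+2$ by showing that the leading order $d$ must be even.

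For this parity argument, I would use that the analytic trajectory $\p(t)$, nominally defined on $[0,\eps]$, extends uniquely to an analytic map on an open two-sided neighborhood of $0$. The $\ell$-pinning conditions are linear equations that hold on $[0,\eps]$, so they are preserved by analytic continuation, and the extension still takes values in $\ell$-pinned configuration space; by continuity, those extended configurations lie in any prescribed neighborhood of $\p$ once $|t|$ is small enough. Because $\p$ is a local minimum of $E$ in pinned configuration space, this yields $f(t) \geq 0$ on a two-sided neighborhood of $0$. But an analytic function whose leading nonzero Taylor term sits at an odd order changes sign at the origin, so two-sided nonnegativity forces $d$ to be even, and combined with $d \geq 2k+1$ this gives $d \geq 2k+2$ and hence $f(t) = O(t^{2k+2})$.

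Combining with the first paragraph, $f(t) = O(t^{2k+2}) = O(|\p(t)-\p|^s)$, so $f(t) \leq c_2 |\p(t) - \p|^s$ on some interval $[0,\delta']$, which is exactly the definition of sometimes-$s$-slowly. The main obstacle will be the parity step: without the two-sided extension, one only obtains $f(t) = O(t^{2k+1})$, one power short of the target exponent $s = (2k+2)/j$. A minor subtlety is verifying that the analytic continuation of $\p(t)$ into $t < 0$ yields genuine pinned configurations near $\p$, but this is immediate from the pinning equations being linear and hence preserved under analytic continuation.
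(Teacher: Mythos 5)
Your proof is correct and follows essentially the same route as the paper: both arguments hinge on upgrading the $2k$-vanishing of $E(\p(t))-E(\p)$ to order $2k+2$ via the local-minimum/parity argument and then comparing $t^{2k+2}$ against $|\p(t)-\p|\sim c\,t^{j}$. Your explicit two-sided analytic continuation (with the observation that the pinning conditions persist) is just a more carefully spelled-out version of the paper's remark that an odd leading order would force $E$ to decrease along the trajectory in some direction.
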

\begin{proof}

  Since $\p$ is a  local minimum of $E$,
  a $(j,2k)$ E-flex must also be $(j,2k+1)$ E-flex.
  This implies that
  $E(\p(t))$
  is $(2k+1)$-vanishing, as an odd leading order would imply that the
  function decreases locally in some direction.

  So if we define
  \[
    a_{2k+2} =
    \frac{1}{(2k+2)!}\left.\frac{d^{2k+2}}
    {dt^{2k+2}}E(\p(t))\right|_{t=0}
  \]
  the univariate Taylor's theorem with Peano remainder gives
  \[
    E(\p(t)) - E(\p) = a_{2k+2}t^{2k + 2} + o(t^{2k + 2}).
  \]
  Because $E(\p(t))$ is non-decreasing,
  $a_{2k+2}$ is non-negative.  If
  $a_{2k+2} = 0$, then, for small enough $t$,
  \[
    E(\p(t)) - E(\p) \le t^{2k+2};
  \]
  otherwise, for small enough $t$, the $o(t^{2k+2})$ term is bounded
  in magnitude by $a_{2k+2}t^{2k+2}$, and so
  \[
    E(\p(t)) - E(\p) \le 2a_{2k+2}t^{2k+2}.
  \]
  Combining the cases, we set $A = \max\{1,2a_{2k+2}\}$
  \bna
  \label{eq:top}
  E(\p(t)) - E(\p) \le At^{2k+2}.
  \ena
  This upper-bounds the growth of the energy in terms of $t$.
  Now we get a bound on the growth of the energy in terms of $|\p(t)|$.
  From the definition of a $(j,2k)$
  E-flex, the trajectory
  $\p(t)$ must be $j$-active.  This, then, implies that
  $|\p(t) - \p|^2$ is is $2j$-active, because this squared length is
  a homogeneous quadratic polynomial  in $t$.  If we set
  \[
    b_{2j} =
    \frac{1}{(2j)!}
    \left.
    \frac{d^{2j}}{dt^{2j}} |\p(t) - \p|^2
    \right|_{t=0},
  \]
  then $2j$-activity implies that $b_{2j}\neq 0$, and
  then, because $|\p(t) - \p|^2$ is increasing at $t=0$,
  $b_{2j} > 0$.
  Taylor's Theorem implies that
  \[
    |\p(t) - \p|^2 = b_{2j}t^{2j} + o(t^{2j}).
  \]
  For sufficiently small $t$, the magnitude of the
  $o(t^{2j})$ is bounded by $\frac{1}{2}b_{2j}$, and so,
  for $t$ in the same range,
  \bna
  \label{eq:bot}
  |\p(t) - \p|^2 \ge \frac{b_{2j}}{2} t^{2j}.
  \ena
  Combining Equations (\ref{eq:top}) and (\ref{eq:bot}),
  for small enough $t$,
  \[
    E(\p(t)) - E(\p) \le A
    \left(\frac{2}{b_{2j}}\right)^{\frac{2k+2}{j}}|\p(t) - \p|^{(2k+2)/j}.
  \]
\end{proof}
\begin{remark}\label{rem:eflex}
  The proof of Lemma \ref{lem:eflex} shows more, which we note for
  later.  If the constant $a_{2k+2}$ appearing in the proof is
  non-zero, then it must be positive, and so the $(j,2k)$-E-flex
  $\p(t)$ has a  growth order of $\frac{2k + 2}{j}$.
\end{remark}

Let the radius be defined as $r(t):=|\p(t)-\p|$.
This has an inverse $t(r)$ in a neighborhood of $0$.
Consider the
function $\delta E(r) :=E(\p(t(r)))-E(\p)$.
The quantity
$\frac{2k+2}{j}$ in Lemma~\ref{lem:eflex}
can be though of as a lower bound on the leading exponent of
the Puiseux series for $\delta E(r)$ at $r=0$.

The upshot of Lemma \ref{lem:eflex} is that the existence of
a $(j,2k)$ $E$-flex gives a lower bound on $s$, the order
of growth of $E$ at $\p$. It turns out that the maximum over all these lower bounds, gives the growth order of $E$.

\begin{theorem}\label{thm:se}
Let $E$ be a function which is analytic at a strict local minimum $\p$. Then the tight order of growth of $E$ is given by 
\begin{equation}\label{sEflex}
    s_E =  \max \left\{ \frac{2k+2}{j} : \text{$(G,\p)$
    has a $(j,2k)$ E-flex}\right\}.
  \end{equation}
\end{theorem}

\begin{proof}
    $E$ has tight growth order of $s_E\in \mathbb Q$, as
  established in Theorem \ref{thm: netto} and Remark
  \ref{remark:netto}.  By hypothesis, $E$ grows
  always $s_E$-quickly at $\p$.

  Suppose
  now that $\p(t)$ is a  $(j,2k)$ E-flex.  Lemma
  \ref{lem:eflex} then implies that $E$ grows sometimes
  $\frac{2k+2}{j}$ slowly.
  Thus we get the inequality
  $\frac{2k+2}{j} \le s_E$.
  
  Next we show that there must be a
  $(j,2k)$ E-flex such that
  $\frac{2k+2}{j}=s_E$.
  From the definition of tight growth
  there
  is a trajectory
  $\q(t) = \p + \sum_{i=j_0}^\infty \a^{(i)}t^i$,
  with $\a^{(j_0)}\neq 0$,
  so that $E(\q(t))$ has  a growth order of $s_E$.
  Because $(G,\p)$ is a strict local minimum,
  $E$ is increasing along $\q(t)$, which implies that the
  Taylor expansion of $E(\q(t))$ at zero is of the form
  \[
    E(\q(t)) = E(\p)+\sum_{i=2k_0 + 2}^\infty a_i t^i;
  \]
  with $a_{2k_0+2}>0$.
  We have $\q(t)$ is a $(j_0,2k_0)$ E-flex of $(G,\p)$.
  From Lemma \ref{lem:eflex} and Remark \ref{rem:eflex},
  $\q(t)$ witnesses that $E$ grows
  sometimes at order $\frac{2k_0 + 2}{j_0}$.
  Hence  $s_E = \frac{2k_0 + 2}{j_0}$.
  (This establishes that we obtain a
  maximum in \eqref{sEflex}, not just a supremum.)
\end{proof}

Practically, finding the particular $(j,2k)$ which realizes Theorem \ref{thm:se} is not always feasible, since it is more complicated than ruling out the existence of a $(j,2k)$ $E$-flex for some particular $j$ and $k$.
Our
results Theorems \ref{thm:main} and \ref{thm:main2} below
show that, in certain situations, the nonexistence of certain flexes does lead to a conclusive statement about the growth order.  
These results correspond, respectively, to the classical notions
of first  and second-order rigidity
and  Theorem \ref{thm:alex}.

\subsection{Flexes and E-flexes}
The next step is to relate $(j,2k)$ E-flexes of a framework to $(j,k)$-flexes,
which connects the energy formalism to rigidity.  Along these lines,
Salerno \cite{salerno} proved the following.
\begin{theorem}[{\cite[Section 2.2]{salerno}}]
  \label{thm:sal}
  Let $(G,\p)$ be a framework and let $E(\q)$ be the harmonic
  spring energy.
  A trajectory $\p(t)$ is a  $(j,k)$-flex of $(G,\p)$ if and only if
  it is a $(j,2k)$ E-flex of $(G,\p)$.
\end{theorem}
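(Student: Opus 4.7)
The plan is to exploit the sum-of-squares structure of the harmonic spring energy. Writing $g_{ij}(t) := l_{ij}(\p(t)) - d_{ij}$, so that each $g_{ij}$ is analytic with $g_{ij}(0) = 0$, we have
\[
E(\p(t)) - E(\p) = \frac{1}{2}\sum_{ij \in G} k_{ij}\, g_{ij}(t)^2.
\]
Since $j$-activity of $\p(t)$ appears on both sides of the claimed equivalence, it suffices to show that $\m(\p(t))$ is $k$-vanishing if and only if $E(\p(t))$ is $2k$-vanishing. By Remark~\ref{rem:lorm}, $\m(\p(t))$ being $k$-vanishing is equivalent to $\l(\p(t))$ being $k$-vanishing, which, since the $d_{ij}$ are constants, is equivalent to each $g_{ij}$ being $k$-vanishing. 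So the whole problem reduces to relating the vanishing order of each $g_{ij}$ to that of the weighted sum $\sum k_{ij} g_{ij}^2$.

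For the forward direction, if each $g_{ij}$ is $k$-vanishing and $g_{ij}(0)=0$, then $g_{ij}(t) = O(t^{k+1})$, so $g_{ij}(t)^2 = O(t^{2k+2})$; summing finitely many such terms gives $E(\p(t)) - E(\p) = O(t^{2k+2})$, and therefore $E(\p(t))$ is $2k$-vanishing. This direction is essentially a one-line calculation.

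The substantive part is the reverse direction. Assuming $E(\p(t))-E(\p) = O(t^{2k+1})$, the key observation is that, because $k_{ij} > 0$, each summand $\tfrac{1}{2}k_{ij}\,g_{ij}(t)^2$ is non-negative for all $t$ in a neighborhood of $0$. Hence term-by-term
\[
0 \le \tfrac{1}{2}k_{ij}\, g_{ij}(t)^2 \le E(\p(t)) - E(\p) = O(t^{2k+1}),
\]
so $g_{ij}(t)^2 = O(t^{2k+1})$ for each edge $ij$. Now $g_{ij}$ is analytic with $g_{ij}(0) = 0$, so (unless it is identically zero, in which case it is $k$-vanishing for free) its Taylor series has a leading nonzero term $c\,t^m$ with $m \ge 1$, and then $g_{ij}^2$ has leading term $c^2\, t^{2m}$. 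The bound $g_{ij}^2 = O(t^{2k+1})$ forces $2m \ge 2k+1$, and since $2m$ is even this upgrades to $2m \ge 2k+2$, i.e.\ $m \ge k+1$. Therefore $g_{ij}$ is $k$-vanishing, finishing the reverse direction.

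The only real (mild) obstacle is the passage from the order of vanishing of the sum to the order of vanishing of each individual summand in the reverse direction; both the strict positivity of the spring constants $k_{ij}$ and the non-negativity of each squared deviation are essential to this step, and together they make the term-by-term bound trivial. A broader equivalence between $(j,k)$-flexes and $(j,2k)$ $E$-flexes for a general stiff-bar energy should require more work, since individual edge-energy contributions need no longer be pure squares; but for the specifically quadratic harmonic spring energy, the argument above is essentially immediate.
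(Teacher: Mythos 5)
Your argument is correct, and it takes a genuinely different route from the paper. The paper does not prove Theorem~\ref{thm:sal} in isolation: it subsumes it in Theorem~\ref{thm:coin}, which handles an arbitrary stiff-bar energy via Fa\`a di Bruno's formula (Lemma~\ref{lem:vanish2}) together with an induction on $k$, where at each inductive step the identity
\[
\frac{1}{(2k)!}\frac{d^{2k}}{dt^{2k}}E(\p(t))\Big|_{t=0} = \frac{1}{2(k!)^2}\sum_{ij}(E_{ij}'')\bigl(l_{ij}^{(k)}\bigr)^2
\]
and the positivity $E_{ij}''>0$ force $l_{ij}^{(k)}=0$. You instead exploit the exact sum-of-squares form of the harmonic energy, $E(\p(t))-E(\p)=\tfrac12\sum_{ij}k_{ij}\,g_{ij}(t)^2$ with $g_{ij}=l_{ij}(\p(t))-d_{ij}$: the forward direction is immediate from squaring, and the reverse direction follows by squeezing each non-negative summand below $O(t^{2k+1})$ and using the parity of the leading exponent of $g_{ij}^2$ to upgrade to order at least $2k+2$. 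This avoids Fa\`a di Bruno and the induction entirely, and as a bonus the parity step recovers (for the harmonic case) the paper's extra observation that a $(j,2k)$ E-flex is automatically a $(j,2k+1)$ E-flex. What the paper's heavier machinery buys is generality: Theorem~\ref{thm:coin} covers every stiff-bar energy, where the edge contributions are no longer exact squares. Your closing remark is apt, though the gap to the general case is smaller than you suggest: near $d_{ij}$ a stiff-bar $E_{ij}$ admits two-sided bounds $c_1(l-d_{ij})^2\le E_{ij}(l)\le c_2(l-d_{ij})^2$ with $c_1,c_2>0$, so your squeeze argument could be adapted to give an alternative proof of the reverse direction of Theorem~\ref{thm:coin} as well. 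Two small points you implicitly use and should be aware of: analyticity of $g_{ij}$ (hence of $E(\p(t))$) requires $d_{ij}\neq 0$, which the paper guarantees by assuming no coincident points joined by an edge (the same caveat as in Remark~\ref{rem:lorm}); and since trajectories are defined only for $t\in[0,\eps]$, your one-sided asymptotics suffice because the leading Taylor coefficient already controls the behavior as $t\to 0^+$.
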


We generalize Salerno's result to arbitrary stiff-bar energies.

\begin{theorem}
  \label{thm:coin}
  Let $E$ be a stiff-bar energy at $\p$.
  A trajectory $\p(t)$ is a  $(j,k)$-flex of $(G,\p)$ if and only if
  it is a $(j,2k)$ E-flex of $(G,\p)$.
  Furthermore, $\p(t)$ is a $(j,2k)$ E-flex if and only if it is a
  $(j,2k+1)$ E-flex.
\end{theorem}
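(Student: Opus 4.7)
The plan is to reduce both equivalences to a single statement about the orders of vanishing of the edge-length deviations $\delta_{ij}(t) := l_{ij}(\p(t)) - d_{ij}$. First I would use Remark~\ref{rem:lorm} (i.e., Lemma~\ref{lem:vanish1} applied with $f(g)=\sqrt{g}$, valid because $d_{ij}\neq 0$) to replace $\m(\p(t))$ by $\l(\p(t))$: $\m(\p(t))$ is $k$-vanishing iff each $\delta_{ij}(t)$ is $k$-vanishing. Thus $\p(t)$ is a $(j,k)$-flex iff it is $j$-active and each $\delta_{ij}(t)$ has all derivatives through order $k$ equal to zero at $t=0$.

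Next I would use the stiff-bar hypothesis to factor each bond energy. Analyticity of $E_{ij}$ at $d_{ij}$ together with $E'_{ij}(d_{ij})=0$ and $E''_{ij}(d_{ij})>0$ lets us write
\[
E_{ij}(l)-E_{ij}(d_{ij}) \;=\; (l-d_{ij})^2\, h_{ij}(l),
\]
where $h_{ij}$ is analytic at $d_{ij}$ with $h_{ij}(d_{ij})=\tfrac{1}{2} E''_{ij}(d_{ij})>0$. Summing over edges,
\[
E(\p(t))-E(\p) \;=\; \sum_{ij\in G} \delta_{ij}(t)^2\, h_{ij}(l_{ij}(\p(t))).
\]
By continuity and positivity at $t=0$, the factor $h_{ij}(l_{ij}(\p(t)))$ is positive for $t$ in a neighborhood of $0$, so each summand is a non-negative analytic function of $t$ there.

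For the forward implication, if $\p(t)$ is a $(j,k)$-flex, each $\delta_{ij}(t)=O(t^{k+1})$, so each summand is $O(t^{2k+2})$ and $E(\p(t))-E(\p)=O(t^{2k+2})$, giving both a $(j,2k)$-E-flex and a $(j,2k+1)$-E-flex. For the reverse implication I would compare leading orders. If some $\delta_{ij}$ is not identically zero, its leading monomial is $c_{ij}t^{\alpha_{ij}}$ with $c_{ij}\neq 0$, so the $ij$-summand has even leading order $2\alpha_{ij}$ with strictly positive leading coefficient $c_{ij}^2\, h_{ij}(d_{ij})$. Since all nonzero leading coefficients in the sum are positive, no cancellation can occur, so the leading order of $E(\p(t))-E(\p)$ is either $\infty$ (when all $\delta_{ij}\equiv 0$) or the even integer $2\min_{ij}\alpha_{ij}$. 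Thus $E(\p(t))$ being $2k$-vanishing forces $\min_{ij}\alpha_{ij}\geq k+1$, showing each $\delta_{ij}$ is $k$-vanishing and hence $\p(t)$ is a $(j,k)$-flex. The same evenness observation immediately yields the second assertion that $(j,2k)$- and $(j,2k+1)$-E-flex coincide.

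The main obstacle is the no-cancellation step in the reverse direction: it is precisely where the ``stiff'' part of stiff-bar (positivity of $h_{ij}(d_{ij})$) is essential, since without it, individual summands could vanish to odd orders or cancel one another, and higher-order vanishing of $E(\p(t))$ would not force higher-order vanishing of the individual $\delta_{ij}$. Every other step is either a direct application of analytic Taylor expansion or the chain-rule content already packaged into Lemma~\ref{lem:vanish1}.
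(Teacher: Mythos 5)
Your proof is correct, and it takes a genuinely different route from the paper's. The paper proves the cycle $(j,k)$-flex $\Rightarrow$ $(j,2k+1)$ E-flex $\Rightarrow$ $(j,2k)$ E-flex $\Rightarrow$ $(j,k)$-flex, with the two nontrivial implications handled by Fa\`a di Bruno's formula (Lemma~\ref{lem:vanish2}) and, for the last implication, an induction on $k$ in which the $2k$th derivative of $E(\p(t))$ is computed explicitly as $\tfrac{1}{2(k!)^2}\sum_{ij}E''_{ij}\,(l^{(k)}_{ij})^2$ and positivity of the $E''_{ij}$ forces $l^{(k)}_{ij}=0$. You instead exploit analyticity of $E_{ij}$ at $d_{ij}$ to factor $E_{ij}(l)-E_{ij}(d_{ij})=(l-d_{ij})^2h_{ij}(l)$ with $h_{ij}(d_{ij})=\tfrac12 E''_{ij}(d_{ij})>0$, and then run a leading-order, no-cancellation argument on the sum $\sum_{ij}\delta_{ij}(t)^2 h_{ij}(l_{ij}(\p(t)))$; this collapses the induction and the combinatorial bookkeeping of Fa\`a di Bruno into one step, and it makes the evenness of the leading vanishing order of $E(\p(t))-E(\p)$ (hence the $(j,2k)$ iff $(j,2k+1)$ statement) completely transparent, whereas the paper obtains that as a byproduct of the cyclic implications. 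What the paper's derivative-by-derivative route buys in exchange is the explicit formula for the first nonvanishing derivative of the energy (used in the spirit of the indicative-trajectory tests) and an argument that, in principle, only needs finitely many derivatives rather than full analyticity of the $E_{ij}$ and of the trajectory --- though since the paper's definitions already impose analyticity on both, your reliance on convergent power series and on the identification of a $k$-vanishing analytic $\delta_{ij}$ with $\delta_{ij}(t)=O(t^{k+1})$ costs nothing here. Your use of Remark~\ref{rem:lorm} to pass between $\m$- and $\l$-vanishing matches the paper, and your closing observation that positivity of $h_{ij}(d_{ij})$ is exactly where ``stiff'' enters is the same mechanism as the paper's use of $E''_{ij}>0$.
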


The proof of this theorem is based on Taylor expansions of
$l_{ij}(\p(t))$ and $E_{ij}(l_{ij}(\p(t)))$. A key tool is Faà di
Bruno's formula, which extends the chain rule to higher derivatives
\cite{faa1,faa2}. Given  univariate analytic functions $f,g$,
Fa\`{a} di Bruno's formula says that
\begin{equation}\label{eq:faa}
  \frac{1}{n!}\frac{d^n}{dt^n}f(g(t))\Big|_{t=0} = \sum_{\mathbf j}
  \frac{1}{j_1!1!^{j_1}j_2!2!^{j_2}\cdots j_n!
  n!^{j_n}}f^{(j_1+\cdots +j_n)}(g')^{j_1}(g'')^{j_2}\cdots (g^{(n)})^{j_{n}},
\end{equation}
where $f$ and its derivatives are evaluated at $g(0)$ and $g$ and
its derivatives are evaluated at $t=0$.
The sum is over all sequences of integers $\mathbf j = (j_1,\ldots,
j_{n})$ satisfying
\begin{equation}\label{eq:faaj}
  1\cdot j_1 + 2\cdot j_2 + \cdots + n\cdot j_{n} = n.
\end{equation}

\begin{remark}
\label{rem:writtenOut}
  Written out explicitly, the first few derivatives are:
  \begin{align*}
    \frac{d}{dt}f(g(t))\big|_{t=0} &=   f'g'\\
    \frac{d^2}{dt^2}f(g(t))\big|_{t=0}&=  f'g''+ f''(g')^2 \\
    \frac{d^3}{dt^3}f(g(t))\big|_{t=0}&= f'g'''+ 3f''g'g'' + f'''(g')^3 \\
    \frac{d^4}{dt^4}f(g(t))\big|_{t=0}&= f'g^{(4)}+
    f''\left(4g'g'''+3(g'')^2\right) + f'''6(g')^2g'' + f^{(4)}(g')^4 \\
    \frac{d^5}{dt^5}f(g(t))\big|_{t=0}&= f'g^{(5)}+
    f''\Big(5g'g^{(4)} + 10g''g'''\Big) + f'''\Big( 10(g')^2g''' +
    15g'(g'')^2\Big)
    \\&\qquad + f^{(4)}10(g')^3g'' +f^{(5)}(g')^5 \\
    \frac{d^6}{dt^6}f(g(t))\big|_{t=0}&= f'g^{(6)}
    +f''\Big(6g'g^{(5)} + 15g''g^{(4)} + 10(g''')^2 \Big)
    \\& \qquad
    + f'''\Big( 15(g')^2g^{(4)} + 60g'g''g'''+ 15(g'')^3  \Big)
    + f^{(4)}\left(20(g')^3g''' + 45(g')^2(g'')^2 \right)
    \\& \qquad
    + f^{(5)}15(g')^4g'' + f^{(6)}(g')^6
  \end{align*}
\end{remark}

We now state two lemmas regarding Taylor expansions of arbitrary
functions. The first lemma is used to prove that a $(j,k)$
$\m$-flex is equivalent to a $(j,k)$ $\l$-flex (Remark \ref{rem:lorm}).

\begin{lemma}\label{lem:vanish1}
  Given  univariate analytic functions $f$ and $g$,
  if $g$ is $k$-vanishing then $f\circ g$ is $k$-vanishing.
  If $g$ is $k$-active and $f'|_{g(0)} \neq 0$, then $f\circ g$ is $k$-active.
\end{lemma}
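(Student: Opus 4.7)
My plan is to apply Fa\`a di Bruno's formula \eqref{eq:faa} directly to $(f\circ g)^{(n)}(0)$ and observe that the vanishing/activity hypotheses on $g$ kill all but at most one term. Recall that each summand in \eqref{eq:faa} is indexed by a tuple $(j_1,\ldots,j_n)$ of non-negative integers satisfying the partition constraint \eqref{eq:faaj}, and contains the factor $(g'(0))^{j_1}(g''(0))^{j_2}\cdots (g^{(n)}(0))^{j_n}$.

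For the first claim, I would assume $g$ is $k$-vanishing, so $g^{(i)}(0)=0$ for every $1\le i\le k$. Fix any $n$ with $1\le n\le k$ and any admissible tuple $(j_1,\ldots,j_n)$. Since $\sum_{i}i\cdot j_i = n\ge 1$, at least one $j_i$ must be positive for some $i$ with $1\le i\le n\le k$. The corresponding factor $(g^{(i)}(0))^{j_i}$ is zero, so the whole summand vanishes. Summing over all tuples gives $(f\circ g)^{(n)}(0)=0$ for $1\le n\le k$, hence $f\circ g$ is $k$-vanishing.

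For the second claim, I would assume $g$ is $k$-active and $f'(g(0))\neq 0$. Since being $k$-active implies being $(k-1)$-vanishing, the first part already gives that $f\circ g$ is $(k-1)$-vanishing, so it remains to show $(f\circ g)^{(k)}(0)\neq 0$. Apply Fa\`a di Bruno at $n=k$: for any tuple $(j_1,\ldots,j_k)$ with $\sum_{i}i\cdot j_i = k$, if some $j_i>0$ with $i<k$, then $g^{(i)}(0)=0$ kills that term. The only surviving possibility is $j_1=\cdots=j_{k-1}=0$ and $j_k=1$, which contributes $\frac{1}{k!}f'(g(0))\,g^{(k)}(0)$ to $\frac{1}{k!}(f\circ g)^{(k)}(0)$. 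So $(f\circ g)^{(k)}(0)=f'(g(0))\,g^{(k)}(0)\neq 0$ since both factors are non-zero by hypothesis, and $f\circ g$ is $k$-active.

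There is no serious obstacle here; Fa\`a di Bruno's formula does all the work, and the vanishing conditions on $g$ are exactly sharp enough to reduce the sum to a single non-zero term in the active case.
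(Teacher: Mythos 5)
Your proof is correct and follows essentially the same route as the paper: both parts apply Fa\`a di Bruno's formula, noting for the first claim that every summand contains a factor $g^{(i)}(0)$ with $i\le k$, and for the second that the only surviving term at order $k$ is $f'(g(0))\,g^{(k)}(0)$. Your version just spells out the partition-index bookkeeping slightly more explicitly than the paper does.
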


\begin{proof}
  The first statement follows directly from \eqref{eq:faa}, since
  each term in the $j$-th derivative of $f\circ g$ involves
  products of some of the first $j$ derivatives of $g$, which are
  assumed to vanish.

  For the second statement, notice that if $g$ is $k$-active then
  it is $k-1$ vanishing, so $f\circ g$ is too. The $k$th derivative of $f\circ g$ is
  \[
    \frac{d^k}{dt^k}f(g(t))\Big|_{t=0} =
    f'g^{(k)} + R,
  \]
  where the terms contained in $R$ each involve a product of at
  least one derivative of the form $g^{(j)}$ with $j<k$. Since
  these terms vanish, so does $R$. Since $f'\neq 0$ and
  $g^{(k)}\neq 0$, we have that $\frac{d^k}{dt^k}f(g(t))\Big|_{t=0}
  \neq 0$, so $f\circ g$ is $k$-active.
\end{proof}

The second lemma is a key step in the proof of Theorem \ref{thm:coin}.
It will be applied to functions $g=l_{ij}$, $f = E_{ij}$, along a
trajectory $\p(t)$.

\begin{lemma}\label{lem:vanish2}
  Suppose $f'|_{g(0)} = 0$, but $f''|_{g(0)} \neq 0$.
  Then 
  (i) If $g$ is $k$-vanishing, then $f\circ g$ is $2k+1$-vanishing.
  (ii) If $g$ is $k-1$-vanishing, then
  \begin{equation}\label{eq:fg2k}
    \frac{1}{(2k)!}\frac{d^{2k}}{dt^{2k}}f\circ g\Big|_{t=0} =
    \frac{1}{2(k!)^2}(f'')(g^{(k)})^2.
  \end{equation}
\end{lemma}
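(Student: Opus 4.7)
The plan is to apply Faà di Bruno's formula \eqref{eq:faa} and carefully identify which multi-indices $\mathbf{j}=(j_1,\ldots,j_n)$ produce nonzero terms under our two different vanishing hypotheses. The hypothesis $f'|_{g(0)} = 0$ means that any term whose ``order of $f$'' is $j_1+\cdots+j_n = 1$ automatically vanishes. So every nonzero term has $j_1 + \cdots + j_n \geq 2$ (or $\geq 2$ by default because $\sum j_i = 0$ forces $n = 0$). The vanishing hypotheses on $g$ kill additional terms by forcing $j_i = 0$ whenever $g^{(i)}(0) = 0$.

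For part (i), if $g$ is $k$-vanishing, then any nonzero term in the expansion of $\tfrac{1}{n!}\tfrac{d^n}{dt^n} f\circ g|_{t=0}$ must have $j_i = 0$ for every $i \leq k$, which means all mass in $\mathbf{j}$ sits at indices $i \geq k+1$. Combined with $j_1 + \cdots + j_n \geq 2$, the constraint $\sum_{i \geq k+1} i\, j_i = n$ forces
\[
n \;\geq\; (k+1)\sum_{i\geq k+1} j_i \;\geq\; 2(k+1) \;=\; 2k+2.
\]
Hence every derivative of order $n \leq 2k+1$ vanishes, so $f\circ g$ is $(2k+1)$-vanishing.

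For part (ii), assume $g$ is only $(k-1)$-vanishing and take $n = 2k$. The same reasoning now forces $j_i = 0$ for $i \leq k-1$, and nonzero terms require $\sum j_i \geq 2$. The constraint $\sum_{i\geq k} i\, j_i = 2k$ with $\sum_{i\geq k} j_i \geq 2$ admits only the solution $j_k = 2$, $j_i = 0$ otherwise: indeed each nonzero $j_i$ contributes at least $k$ to a sum of $2k$, so at most two of them can be nonzero, and if exactly two are nonzero they must both equal $1$ at index $k$. Substituting this single surviving multi-index into \eqref{eq:faa} yields
\[
\frac{1}{(2k)!}\frac{d^{2k}}{dt^{2k}} f\circ g \Big|_{t=0} \;=\; \frac{1}{2!\,(k!)^2}\, f''(g^{(k)})^2,
\]
which is exactly \eqref{eq:fg2k}.

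The only real work is the combinatorial bookkeeping in part (ii): one must rule out ``split'' multi-indices like $j_k = 1, j_{k+\ell} = 1$. This is the main (minor) obstacle, and it is handled by the observation above that each nonzero $j_i$ with $i \geq k$ contributes at least $k$ to $n$, so the composition $n = 2k$ with $\sum j_i \geq 2$ admits no splitting beyond $j_k = 2$. Once this uniqueness is established, extracting the coefficient $\frac{1}{2(k!)^2}$ is immediate from Faà di Bruno.
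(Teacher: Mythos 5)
Your proposal is correct and follows essentially the same route as the paper: apply Fa\`{a} di Bruno's formula and count which multi-indices $\mathbf j$ can give nonzero terms, using $f'|_{g(0)}=0$ to discard the $\sum j_i=1$ term and the vanishing of low-order derivatives of $g$ to discard the rest; your explicit inequality $n=\sum_i i\,j_i\ge (k+1)\sum_i j_i\ge 2k+2$ and the uniqueness argument forcing $j_k=2$ are just a more spelled-out version of the paper's bookkeeping, and the coefficient $\tfrac{1}{2(k!)^2}$ is extracted identically.
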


\begin{remark}
  It is additionally possible to show under the same assumptions that (iii) If $f\circ g$ is
  $2k$-vanishing, then $g$ is $k$-vanishing. As this statement is a
  special case of Theorem \ref{thm:coin} we omit its proof. Thus,
  Lemma \ref{lem:vanish2} and (iii) imply the following:
  \begin{itemize}
    \item If $g$ is $k$-active, then $f\circ g$ is $2k$-active.
    \item If $f\circ g$ is $2k$-active, then $g$ is $k$-active.
  \end{itemize}
\end{remark}

\begin{proof}
  (i) Notice that for  $n\leq 2k+1$, each term in the sum
  \eqref{eq:faa} to compute $(f\circ g)^{(n)}|_{t=0}$ must involve
  at least one derivative $g^{(j)}$ with $j\leq k$. This follows by
  considering combinations of integers satisfying \eqref{eq:faaj},
  and observing that $f'=0$ so the term involving $g^{(2k+1)}$
  vanishes. Hence, each term in the sum is zero by assumption, so
  $(f\circ g)^{(n)}|_{t=0}=0$.

  (ii) Suppose $g$ is $k-1$-vanishing, and consider the terms in
  the sum \eqref{eq:faa} for $(f\circ g)^{(2k)}|_{t=0}$. All terms
  contain at least one derivative $g^{(j)}$ with $j\leq k-1$ except
  one term, leading to expression \eqref{eq:fg2k}.
\end{proof}

\begin{lemma}
\label{lem:formula}
Let $E$ be a stiff-bar energy 
at $\p$.
Suppose that $\p(t)$ is a $(j,k-1)$
flex. Then 
  \ba
  \frac{1}{(2k)!}\frac{d^{2k}}{dt^{2k}}E(\p(t))
  \Big|_{t=0} = \frac{1}{2(k!)^2}\sum_{ij}(E_{ij}'')
  (l_{ij}^{(k)})^2
  \ea
  where $l^{(k)}_{ij} := \frac{d^k} {dt^k}l_{ij}(\p(t))\big|_{t=0}$.
\end{lemma}
\begin{proof}
 Apply Lemma \ref{lem:vanish2} (ii)  to
  each $g=l_{ij}(\p(t))$, which is $k-1$-vanishing.
\end{proof}

\begin{proof}[Proof of Theorem \ref{thm:coin}.]
  We show the following are equivalent:
  \begin{enumerate}[(i)]
    \item $\p(t)$ is a  $(j,k)$ flex.
    \item $\p(t)$ is a $(j,2k+1)$ E-flex.
    \item $\p(t)$ is a $(j,2k)$ E-flex.
  \end{enumerate}
  (i)$\Rightarrow$(ii): Apply Lemma \ref{lem:vanish2}  to each
  $f=E_{ij}(l)$ with $g=l_{ij}(\p(t))$. Specifically:
  if $\p(t)$ is a $(j,k)$ flex, then each $l_{ij}(\p(t))$ is
  $k$-vanishing, so by Lemma \ref{lem:vanish2},
  $E_{ij}(l_{ij}(\p(t)))$
  is $2k+1$-vanishing. Therefore
  $E(\p(t))$
  is also $2k+1$-vanishing.

  (ii)$\Rightarrow$(iii) follows by definition.

  (iii)$\Rightarrow$(i): We prove this by induction. Consider the
  base case, $k=1$, and suppose that $\p$ is a $(j,2)$ E-flex, so
  that $\frac{d^2}{dt^2}E(\p(t))|_{t=0} =0$. Since  $E$ is stiff
  bar, we have, taking two time derivatives and evaluating at $t=0$,
  \[
    \frac{d^2}{dt^2}E(\p(t))\Big|_{t=0} = \sum_{ij}
    (E_{ij}'')(l_{ij}')^2 = 0
  \]
  where $E''_{ij} := \frac{d^2}{dl^2}E_{ij}(l)\big|_{l=d_{ij}} $,
  $l_{ij}' := \frac{d}{dt}l_{ij}(\p(t))\big|_{t=0}$.

  Since $E_{ij}'' > 0$, the above can only hold if $l_{ij}' = 0$,
  which implies that $p$ is a $(j,1)$ $\l$-flex and
  also a $(j,1)$ $\m$-flex (see Remark~\ref{rem:lorm}).

  Now suppose the statement holds for $k-1$, and suppose that
  $\p(t)$ is a $(j,2k)$ E-flex. Then by definition $\p(t)$ is
  also a $(j,2(k-1))$ E-flex, so by the induction hypothesis it
  is a $(j,k-1)$-flex.
  From Lemma~\ref{lem:formula}
  we have
  \ba
  \frac{1}{(2k)!}\frac{d^{2k}}{dt^{2k}}E(\p(t))
  \Big|_{t=0} = \frac{1}{2(k!)^2}\sum_{ij}(E_{ij}'')
  (l_{ij}^{(k)})^2
  \ea
  where $l^{(k)}_{ij} := \frac{d^k} {dt^k}l_{ij}(\p(t))\big|_{t=0}$.
  Since each $E_{ij}''>0$, the above can only equal 0 if each
  $l_{ij}^{(k)}=0$, hence, $\p(t)$ is a $(j,k)$ $\l$-flex and
  also a $(j,k)$ $\m$-flex (see Remark~\ref{rem:lorm}).
\end{proof}

\begin{remark}
  Because Theorem \ref{thm:coin} holds for any stiff-bar energy,
  and part (i)
  does not depend on $E$, rigidity statements that can be
  formulated in terms
  of (non-)existence of $(j,k)$ $E$-flexes either hold for every
  stiff-bar energy
  $E$ or for none of them.  For example, Lemma \ref{lem:eflex} and
  Theorem \ref{thm:coin} imply that, if $(G,\p)$ has a $(j,k)$-flex then,
  any stiff-bar energy $E$ at $\p$ grows sometimes-$s$-slowly for
  $s = \frac{2k+2}{j}$.
\end{remark}

\subsection{Related work}
Salerno~\cite{salerno}
(see also~\cite{vass})
describes an energy-based
rigidity analysis
that tests for
$1$-active trajectories that
are $(1,2k)$ E-flexes for higher and higher values of $k$.
The approach stops when $k$ cannot be increased.
In the case that
$\dim(K)=1$, for each $k$, the test can be
done efficiently using linear algebra.
When $\dim(K) >1$ the method becomes more
complicated and relies on numerical optimization.

In light of Lemma~\ref{lem:eflex},
Salerno's method is
certifying that $E$  grows sometimes-$s$-slowly where
$s=\frac{2k+2}{j}$, for higher and higher values of $k$.
But the halting-level of this  method does not show that
$E$ is growing always-$k$-quickly for some value of $k$.
In particular,
it only considers  $1$-active trajectories.
When  $(G,\p)$ is a cusp mechanism, this method
will halt at some finite $k$, even though $E$ grows
sometimes-$\infty$-slowly.

Responding to the existence of cusp mechanisms,
Garcea et al~\cite{gar} modified Salerno's method
for the case where $\dim(K)>1$, and considered
$(j,2k)$ E-flexes
over
various values of $j$.
This
gives  them the possibility of finding
higher values of  $s$  for sometimes-slow growth of $E$ than will
be found by Salerno's method.
But they say~\cite[Section 3.6.2]{gar},
that, in the general case,
this approach will not be able to
establish that $E$ grows always-$s$-quickly for any $s$,
as  they can
only explore up to some finite $j$.

Notably, like, Salerno,
in the $\dim(K)=1$ case,
they only consider
$1$-active trajectories.
(A  $1$-vanishing trajectory would instead require a
Puiseux series in their Equation (35)).
No explicit mathematical
justification is given for this, but
in light of the results of our paper, this
is provably a correct method for determining
the rigidity order of a framework.
In this paper
we  prove,
from the point of view of critical point analysis,
that in the case of
$\dim(K)=1$,
only $1$-active trajectories need to be
explored
in order to determine the tight growth order of $E$.

Our paper is inspired by earlier work in~\cite{hk}.
In that paper, they show that second-order rigidity is equivalent
to $\p$ being a strict (global) minimum of a certain
4th order approximation of a specific energy function.
But they do not prove that the strict minimality of this
4th order function implies strict (local) minimality of
the full energy function. So~\cite{hk}
does not provide  a new
proof that second-order rigidity implies rigidity.

Another recent paper~\cite{varda}
discusses some connections between rigidity and energy.  In particular
their Appendix 2  provides an alternative proof
of Salerno's Theorem~\ref{thm:sal} for
$k=2$.

\section{Definition of rigidity order}
\label{sec:orders}

Given this setup, we define the rigidity order of a framework using
the tight growth order for stiff bar energies. We must first establish that
the tight growth order is independent of the particular energy in question.
We do this by relating the tight order to the existence of $(j,k)$-flexes.

\begin{theorem}\label{thm: order jk-flex}
  Let $(G,\p)$ be a rigid framework.
  Then there exists an $s\in \mathbb Q$ such that every stiff bar
  energy for $(G,\p)$
  has tight growth order of $s$ at $\p$.
  Furthermore,  $s$ may be computed as
  \begin{equation}\label{sflex}
    \frac{s}{2} =  \max \left\{ \frac{k+1}{j} : \text{$(G,\p)$
    has a $(j,k)$-flex}\right\}.
  \end{equation}
\end{theorem}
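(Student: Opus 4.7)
The plan is to show that for every stiff bar energy $E$ at $\p$, the tight growth order $s_E$ equals $2\cdot\max\{(k+1)/j:(G,\p)\text{ has a }(j,k)\text{-flex}\}$. Since the right-hand side is $E$-independent, this simultaneously identifies $s_E$ with a common rational value $s$ and gives the formula. Existence of $s_E$ itself is immediate: rigidity and Theorem~\ref{thm: E min and rigidity} make $\p$ a strict local minimum of the analytic function $E$, and Theorem~\ref{thm: netto}(2) then supplies $s_E\in\mathbb{Q}$ together with an analytic trajectory $\widetilde\p(t)$ at $\p$ for which $E(\widetilde\p(t))-E(\p)$ has growth order exactly $s_E$.

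For the direction $s_E/2\geq (k+1)/j$, I would combine Theorem~\ref{thm:coin} and Lemma~\ref{lem:eflex}: any $(j,k)$-flex is also a $(j,2k)$ E-flex, so by Lemma~\ref{lem:eflex} there is a witnessing trajectory along which $E$ grows no faster than $|\cdot-\p|^{(2k+2)/j}$; pairing this with the always-$s_E$-quick lower bound from $s_E$-tightness yields $c_1 r^{s_E}\leq c_2 r^{(2k+2)/j}$ along that trajectory as the distance $r$ shrinks to $0$, which forces $s_E\geq (2k+2)/j$. Taking the supremum over all $(j,k)$-flexes completes this direction.

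The main content is the reverse inequality, for which I analyze the Netto trajectory $\widetilde\p$ directly. Expanding $\widetilde\p(t)=\p+\tfrac{1}{j!}\p^{(j)}t^j+O(t^{j+1})$ with $\p^{(j)}\neq 0$ makes $\widetilde\p$ $j$-active and yields $|\widetilde\p(t)-\p|=\Theta(t^j)$. Writing $E(\widetilde\p(t))-E(\p)=a_n t^n+O(t^{n+1})$ with $a_n\neq 0$ and comparing with $\Theta(|\widetilde\p(t)-\p|^{s_E})$ forces $n=j\,s_E$, which is therefore a positive integer with $a_n>0$ (since $E(\widetilde\p(t))>E(\p)$ for small $t>0$). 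The delicate step, which I expect to be the main obstacle, is a parity argument: because $\widetilde\p$ is analytic at $0$ it extends to a two-sided neighborhood, and for small $t>0$ the configuration $\widetilde\p(-t)$ is close to but distinct from $\p$, so strict minimality forces $E(\widetilde\p(-t))>E(\p)$; comparing Taylor expansions yields $(-1)^n a_n>0$, hence $n$ is even. Writing $n=2(k+1)$ makes $\widetilde\p(t)$ a $(j,2k+1)$ E-flex, and Theorem~\ref{thm:coin} converts it into a $(j,k)$-flex realizing $(k+1)/j=n/(2j)=s_E/2$.

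The only wrinkle is the boundary case $k=0$ (excluded from the convention $j,k\geq 1$), which occurs precisely when $n=2$. Because $E$ is sometimes-$2$-slow at any analytic critical point (its Hessian provides a quadratic upper bound in any direction), one has $s_E\geq 2$ in general, and the presence of any $(1,1)$-flex would force $s_E\geq 4$ by the lower bound direction; so $n=2$ forces first-order rigidity. Even then, for any nonzero pinned perturbation $\v$ the trajectory $\p+\v t^2$ is a genuine $(2,1)$-flex, giving $\max\{(k+1)/j\}\geq 1$; combined with the lower bound $\max\leq s_E/2=1$, the supremum is attained at exactly $1=s_E/2$, closing the argument.
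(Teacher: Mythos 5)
Your proof is correct and follows essentially the same route as the paper: existence of $s_E$ via Theorem~\ref{thm: netto}, the inequality $\frac{k+1}{j}\le s_E/2$ via Theorem~\ref{thm:coin} and Lemma~\ref{lem:eflex}, and attainment by Taylor-expanding $E$ along the Netto witnessing trajectory, using minimality on a two-sided neighborhood to force an even leading order, and converting back to a $(j,k)$-flex with Theorem~\ref{thm:coin}. Your only real departure is the explicit treatment of the boundary case $n=2$ (i.e.\ $k=0$, the first-order rigid situation), where the $(2,1)$-flex $\p+\v t^2$ realizes the maximum; this is a corner the paper's argument glosses over, and handling it is a welcome addition rather than a change of method.
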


\begin{proof}
  Let $E$ be some particular stiff bar energy for $(G,\p)$.
  Then  $E$ has tight growth order of $s_E\in \mathbb Q$, as
  established in Theorem \ref{thm: netto} and Remark
  \ref{remark:netto}.  By hypothesis, $E$ grows
  always $s_E$-quickly at $\p$.
  By Theorem \ref{thm:se}, $s_E$ may be computed as the maximum of $(2k+2)/j$ over all $(j,2k)$ E-flexes. By Theorem \ref{thm:coin}, a $(j,2k)$ E-flex is a $(j,k)$-flex, and conversely. Hence, $s_E$ may be computed as the maximum of $(2k+2)/j$ over all $(j,k)$-flexes. 

  We have established that \eqref{sflex} holds for a particular
  energy $E$ with tight growth order $s_E$. Since the right-hand
  side is independent of $E$, so is $s_E$.
  


\end{proof}

We thus propose the following
definition for
the rigidity order
of a framework.
\begin{mdframed}
  {
    \begin{definition}\label{defn:order}
      Let $(G,\p)$ be a rigid framework.
      The \defn{rigidity order}
      of $(G,\p)$ is defined as
      $\nu=s/2\in \mathbb Q$, where $s$ is the tight growth order
      at $\p$
      of any stiff bar energy associated with $(G,\p)$.
      (The choice of energy is not material due to  Theorem
      \ref{thm: order jk-flex}.)
      In light of Theorem \ref{thm: order jk-flex},
      $\nu=s/2$ can also be
      computed using Equation \eqref{sflex}.
    \end{definition}
  }
\end{mdframed}

As we show next in this paper, in some cases, one can determine the
rigidity order by looking for the lack of specific
flexes. In the general case, the tight growth order
could possibly be explored using symbolic algebra
methods~\cite{netto,pham}.
We note that nothing,
a-priori, rules out fractional values for this
rigidity order. An example that seems to have a
fractional order appears in~\cite[Example 7]{narwal}.

\subsection{Related work}
\label{sec:prvOrder}
Our work is inspired and informed by related ideas
from the literature.
Garcea, et al. \cite{gar} say that a framework
$(G,\p)$ with a $(j,k)$ flex is
an ``infinitesimal mechanism of order $k/j$''.
But as described in Remark~\ref{rem:irrFlex},
a $(1,1)$ flex can be reparameterized
to be a $(2,3)$ or a $(3,5)$ flex,
each with a different value of $k/j$.
Note that reparameterization
does not cause any difficulties
with the quantity $\frac{k+1}{j}$, or 
have any effect on the
growth order.

In an attempt to work around this  issue,
Stachel~\cite{stachel1999,stachel} defines
a $(j,k)$ flex to be ``irreducible'' if  it is not  a
reparameterization of a $(j',k')$ flex
where $j'<j$.
He then says that a framework
with an irreducible $(j,k)$ flex
has a ``fractional order infinitesimal flexibility
of $k/j$''.
But problems can still arise even
assuming irreducibility.
As described in~\cite{narwal}
Stachel later described
a framework such that,
for each $j$, has an irreducible
$(j,3j-1)$ flex, but no
$(j,3j)$ flex. For this framework,
the fraction
$(3j-1)/j$ does not achieve a maximum value
over all $j$.
Note that, from our point of view
all of these $(j,3j-1)$ flexes correspond to a
$\frac{k+1}{j}$ value of $3$.

In response to this example,
in a very recent paper~\cite{narwal2},
Nawratil proposes a further change to Stachel's definition by restricting
the trajectories $\p(t)$ to those that (quoting directly)
\begin{quote}
  {\em can be
    extended to a minimal parametrization of a branch of order $j$ of an
    algebraic curve, which corresponds to a one-dimensional
    irreducible component
    of a variety determined by an ideal, whose generators are contained in
    the linear
    family of quadrics spanned by [the
  squared-length edge constraints].}
\end{quote}
Nawratil indicates in \cite{narwal2}
that,
there are well defined values $j$ and $k$ associated with the
``highest real flex''
of the framework, subject to this
limitation.

Generally speaking,
the focus in these works on a flexibility-order
instead of a rigidity-order
leads to the fractional
value $k/j$ which
does not behave  as simply as
the fractional
value $\frac{k+1}{j}$. In our definition no
specific maximum values for $j$ and $k$ are defined,
or considered important. All that matters for us is
the order of energy growth.

Instead of studying flexibilty and rigidity
directly in the language of $(j,k)$ flexes,
Tachi~\cite[Section 5]{tachi} defines
$(G,\p)$ to be ``$s$-order infinitesimally
flexible'' if there is a
$C^\infty$ trajectory $\p(t)$
such that
\ba
|\p(t)_i - \p(t)_j|^2 = |\p_i - \p_j|^2 + o(|\p(t) - \p|^s),
\ea
for all edges $ij\in E(G)$.
Relating this back to flexes,
it can be shown that
if $(G,\p)$ has
a $(j,k)$ flex, then
\ba
|\p(t)_i - \p(t)_j|^2 - |\p_i - \p_j|^2 \le c|\p(t) - \p|^{\frac{k+1}{j}}
\ea
for some $c>0$.
Thus for any $\eps>0$, $(G,\p)$
would be $(\frac{k+1}{j}-\eps)$-order
infinitesimally flexible under Tachi's definition.
In particular, it can be shown that
if we define $s^*$ to be the
supremum over $s$ such $(G,\p)$ is
$s$-order infinitesimally flexible
according to Tachi's definition,
then $s^*$ is
equal to the rigidity order as we have defined it.  By looking at
rigidity order instead of flexibility
order, we avoid the need for taking
a supremum.

In~\cite{narwal}, Nawratil defines
a  very different
notion  of a flexibility-order
using algebraic multiplicity. But, as he points
out, since his definition is based on complex numbers,
it cannot assign any finite order to a
rigid
framework of a graph that is generically
flexible. This issue is not a problem for his
more recent definition in~\cite{narwal2}.

\section{Main results on rigidity tests}
The characterization of rigidity order using
$(j,k)$ flexes requires looking over all
positive integer values for $j$.
But in some cases, we can actually fix $j=1$ and
still obtain tests for rigidity order,
just looking at $(1,k)$ flexes.

We can now summarize
the main results of our paper regarding
rigidity tests and
rigidity orders

\begin{theorem}
  \label{thm:main0}
  Let $(G,\p)$ be a framework
  that  has  no $(1,1)$-flex.
  Then the rigidity  order of $(G,\p)$ is $1$.
  Moreover, if it has a $(1,1)$-flex, then it is either flexible
  or it is rigid with a rigidity order of at least $2$.
\end{theorem}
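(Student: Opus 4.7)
My plan is to reduce the theorem to the formula
\[
    \nu = \max\left\{\frac{k+1}{j} : (G,\p) \text{ has a } (j,k)\text{-flex}\right\}
\]
provided by Theorem~\ref{thm: order jk-flex}, and then bound this maximum under each hypothesis. Throughout I assume the pinned configuration space is nontrivial (otherwise the framework is rigid in a vacuous sense and there are no trajectories).

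For the first statement, suppose $(G,\p)$ has no $(1,1)$-flex. Then $(G,\p)$ is first-order rigid, hence rigid by Theorem~\ref{thm:1or}, so $\nu$ is defined. To show $\nu = 1$ I would bound the set in the maximum above and below. The upper bound is the only step with real content: I claim every $(j,k)$-flex of $(G,\p)$ must have $k \le j-1$, so that $(k+1)/j \le 1$. Indeed, if some $(j,k)$-flex had $k \ge j$, it is in particular a $(j,j)$-flex; writing its Taylor expansion $\p(t) = \p + (\p^{(j)}/j!)\, t^j + O(t^{j+1})$ with $\p^{(j)} \neq 0$, and computing the $j$th derivative of each $m_{ij}(\p(t))$ at $t = 0$, one gets $2(\p_i - \p_j)\cdot(\p^{(j)}_i - \p^{(j)}_j)$. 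Vanishing of this is exactly the first-order flex equation on $\p^{(j)}$, so the straight-line trajectory $\p + \p^{(j)} t$ would be a nontrivial $(1,1)$-flex, contradicting the hypothesis. The matching lower bound is immediate: for any nonzero $\v$ in the pinned configuration space, the trajectory $\p(t) := \p + \v t^2$ is $2$-active, and the first derivative of $\m(\p(t))$ at $t=0$ vanishes automatically because $\p'(0) = 0$, so this is a $(2,1)$-flex realizing $(k+1)/j = 1$.

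For the second statement, suppose $(G,\p)$ has a $(1,1)$-flex. If the framework is flexible there is nothing more to prove; if it is rigid, $\nu$ is defined, and the assumed $(1,1)$-flex itself contributes $(k+1)/j = 2$ to the set in the maximum, giving $\nu \ge 2$ with no further work.

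The only step with genuine content is the Taylor-expansion argument extracting a first-order flex from the leading coefficient of a hypothetical $(j,j)$-flex; this is where I expect the main (modest) obstacle to lie, mostly in phrasing it cleanly. The rest is direct bookkeeping against Theorem~\ref{thm: order jk-flex}.
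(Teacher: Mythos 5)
Your argument is correct, but it takes a genuinely different route from the paper's. The paper proves Theorem \ref{thm:main0} as the degree-two instance of its energy/critical-point template: it sets $f(\delta\p)=E(\p+\delta\p)-E(\p)$, observes that the linear trajectories $\p' t$ are indicative at order $2$, and runs the second derivative test (Proposition \ref{prop:indic}) together with Lemma \ref{lem: stiff-bar Hessian K} (the Hessian kernel of a stiff-bar energy equals $K$) and Theorem \ref{thm: E min and rigidity}; the second half uses the $(1,2)$ $E$-flex coming from a kernel vector and Lemma \ref{lem:eflex}. You instead import rigidity from the classical Theorem \ref{thm:1or} and then read the order off the formula $\nu=\max\{(k+1)/j\}$ of Theorem \ref{thm: order jk-flex}: your upper bound is the leading-coefficient argument that, absent a $(1,1)$-flex, every $(j,k)$-flex has $k\le j-1$ (this is exactly the classical extraction of a first-order flex from a $(j,j)$-flex that the paper itself sketches right after Theorem \ref{thm:1or}, and it is sound in the pinned setting since derivatives of an $\ell$-pinned trajectory respect the pinning), and your lower bound is the always-available $(2,1)$-flex $\p+\v t^2$; the second statement is then immediate because a $(1,1)$-flex contributes the ratio $2$. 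Both routes ultimately rest on Theorem \ref{thm: order jk-flex} (yours explicitly, the paper's implicitly through the definition of rigidity order), and your bookkeeping of the exact value $\nu=1$ is, if anything, more explicit than the paper's, which phrases the first part contrapositively as ``not rigid implies a $(1,1)$-flex'' and leaves the always-$2$-quickly growth from the positive-definite Hessian largely implicit. What you give up is the paper's self-contained energy-based proof of the rigidity step, which is deliberately structured as a warm-up for the fourth- and $2k$th-derivative tests used in Theorems \ref{thm:main} and \ref{thm:main2}; what you gain is brevity and a purely flex-theoretic argument once Theorem \ref{thm: order jk-flex} is in hand.
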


\begin{theorem}
  \label{thm:main}
  Let $(G,\p)$ be a framework
  that  has a $(1,1)$-flex but no
  $(1,2)$-flex. Then the rigidity  order of $(G,\p)$ is $2$.
  Moreover, if it has a $(1,2)$-flex, then it is either flexible
  or it is rigid with a rigidity order of at least $3$.
\end{theorem}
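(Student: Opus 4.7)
The plan is to reduce the rigidity order to the characterization provided by Theorem~\ref{thm: order jk-flex}, and then to pin down the extremal $(j,k)$-flexes by analyzing a stiff-bar energy $E$ at $\p$. Theorem~\ref{thm:coin} freely translates between $(j,k)$-flexes of $(G,\p)$ and $(j,2k)$ $E$-flexes, so the genuinely new ingredient needed is a fourth-derivative test at a degenerate critical point of $E$.

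\textbf{The first claim.} Suppose $(G,\p)$ has a $(1,1)$-flex but no $(1,2)$-flex. For the lower bound $\nu\ge 2$, Equation~\eqref{sflex} applied to the given $(1,1)$-flex immediately yields $\nu\ge(1+1)/1=2$; equivalently, Theorem~\ref{thm:coin} promotes the $(1,1)$-flex to a $(1,2)$ $E$-flex and Lemma~\ref{lem:eflex} shows $E$ grows sometimes-$4$-slowly, so the tight growth order $s$ satisfies $s\ge 4$. For the matching upper bound $\nu\le 2$, I would prove directly that $E$ grows always-$4$-quickly at $\p$; this forces $s\le 4$, and combined with the previous bound gives $s=4$ and $\nu=2$. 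Establishing always-$4$-quick growth is the analytic heart of the proof: the Hessian of $E$ at $\p$ is degenerate with kernel equal to the first-order flex space $K$, so on the complementary subspace $\overline{K}$ the Hessian is positive definite and already contributes quadratic (hence stronger than quartic) growth, while on $K$ the hypothesis that there is no $(1,2)$-flex is equivalent (by Theorem~\ref{thm:coin}) to the absence of a $(1,4)$ $E$-flex, which should translate into strict positivity of an appropriate quartic form in the $K$-directions. A careful splitting of nearby configurations into $K$ and $\overline{K}$ components, together with control of the cross-terms between them, should then assemble these pieces into a uniform quartic lower bound.

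\textbf{The second claim.} Suppose $(G,\p)$ has a $(1,2)$-flex. If $(G,\p)$ is flexible there is nothing to prove; otherwise it is rigid and Equation~\eqref{sflex} applied to this $(1,2)$-flex gives $\nu\ge(2+1)/1=3$. Equivalently, Theorem~\ref{thm:coin} turns the $(1,2)$-flex into a $(1,4)$ $E$-flex, and Lemma~\ref{lem:eflex} produces a trajectory along which $E$ grows no faster than $|\p(t)-\p|^6$, yielding $s\ge 6$ and $\nu\ge 3$.

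The main obstacle is the fourth-derivative test behind the upper bound $\nu\le 2$ of the first claim. The Hessian is degenerate with kernel of arbitrary dimension, so the classical second-derivative test does not apply, and one needs a quantitative quartic estimate rather than merely a qualitative strict-minimum certificate; the estimate must moreover be uniform across all directions of approach to $\p$. Cushing's \cite{cushing} argument only addresses nullity-one degeneracies, so extending it to arbitrary $\dim(K)$ --- in particular, controlling how a first-order motion in $K$ interacts with the $\overline{K}$-correction forced at second order along a candidate minimizing trajectory --- is the delicate point requiring the novel fourth-derivative test announced in the paper's introduction.
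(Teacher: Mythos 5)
Your treatment of the lower bounds (the $\nu\ge 2$ direction of the first claim and the whole second claim) matches the paper: Theorem~\ref{thm:coin} plus Lemma~\ref{lem:eflex} (equivalently Equation~\eqref{sflex}) do exactly what you say. But the heart of the theorem is the upper bound, and there you have described the problem rather than solved it: the phrases ``should translate into strict positivity of an appropriate quartic form in the $K$-directions'' and ``control of the cross-terms \ldots should then assemble these pieces'' are precisely the step needing a new idea, and the route you sketch is one the paper explicitly shows cannot work as stated. Positivity on $\overline K$ from the Hessian, plus a positive quartic form in the $K$-directions, plus ``cross-term control'' is slice-by-slice reasoning: the example $f(x,y)=(x-y^2)^2$ has a definite Hessian in the $x$-direction and strict quartic growth along the $y$-axis yet has a non-strict minimum; and the example \eqref{eq:rud} shows that even when the full fourth-order Taylor polynomial has a strict minimum, the higher-order remainder can destroy it, because that polynomial may grow slower than fourth order along curves such as $x=t^2$, $y=t$. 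So the chain ``no $(1,4)$ $E$-flex $\Rightarrow$ positive quartic form on $K$ $\Rightarrow$ always-$4$-quick growth'' has genuine holes in both implications.

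What closes the gap in the paper is the specific family of parabolic test trajectories $\delta\p(t;\p'_0,\p''_0)=\p'_0t+\p''_0t^2$ with $\p'_0\in K$, $\p''_0\in\overline K$, and $(\p'_0,\p''_0)$ ranging over a compact sphere (Lemma~\ref{lem: delta p indicative}, built on Lemma~\ref{lem:ind2a}). Three ingredients are needed that your sketch does not supply: (i) this family is indicative at order $4$ --- in particular its trajectories initially cover a neighborhood of $\p$ (the nested-ellipsoid argument) and the cubic coefficient vanishes identically because $E-E(\p)\ge 0$; (ii) Proposition~\ref{prop:indic}(a), a Lagrange-remainder estimate made uniform over the compact parameter sphere, which is what converts ``$a_4>0$ for every parameter'' into the bound $E(\q)-E(\p)\ge c\,|\q-\p|^4$ on a whole neighborhood --- this uniformity is exactly what slice-by-slice arguments lose; and (iii) Lemma~\ref{lem:isactive}: if $a_4$ vanishes for some parameter, the offending trajectory must have $\p'_0\neq 0$ (otherwise it lies in $\overline K$, where the Hessian is definite), hence is $1$-active, so it is a $(1,4)$ $E$-flex and Theorem~\ref{thm:coin} yields a $(1,2)$-flex. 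Without (iii) you would only obtain a $(j,4)$ $E$-flex for some $j$, and a $2$-active flex (such as the $(2,3)$-flex obtained by reparameterizing any $(1,1)$-flex, cf.\ Remark~\ref{rem:irrFlex}) does not contradict your hypothesis. You correctly located the difficulty --- the interaction between a first-order motion in $K$ and the forced second-order correction in $\overline K$ --- but the proof of the theorem essentially \emph{is} the construction and verification of this fourth-derivative test, and that is what is missing from your proposal.
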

\begin{theorem}
  \label{thm:main2}
  Let $(G,\p)$ be a framework
  with $\dim(K)=1$.
  Suppose that for some $k$,
  it has a $(1,k-1)$ flex but no
  $(1,k)$ flex. Then its rigidity order is $k$.
  Moreover, if it has a $(1,k)$ flex, then it is either flexible
  or it is rigid with a rigidity order of at least $k+1$.
\end{theorem}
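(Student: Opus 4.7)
\medskip
\noindent
\textbf{Proof proposal.}
My plan is to combine Alexandrov's theorem (Theorem~\ref{thm:alex}), the characterization of rigidity order via $(j,k)$-flexes (Theorem~\ref{thm: order jk-flex}), and one new key lemma specific to the $\dim(K)=1$ setting. The rigidity of $(G,\p)$ is immediate from Theorem~\ref{thm:alex}, since by hypothesis there is no $(1,k)$-flex. By Theorem~\ref{thm: order jk-flex}, the rigidity order equals $\max\{(k'+1)/j : (G,\p) \text{ has a } (j,k')\text{-flex}\}$. The assumed $(1,k-1)$-flex contributes the value $k/1 = k$, giving a lower bound of $k$ on the order. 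To get the matching upper bound, I need to rule out every $(j,k')$-flex with $k' \ge jk$; equivalently, rule out every $(j,jk)$-flex.

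The technical heart of the argument is the following lemma, which I claim:
\emph{If $\dim(K) = 1$ and $(G,\p)$ has a $(j, jk)$-flex, then $(G,\p)$ has a $(1,k)$-flex.}
Given this lemma, the contrapositive rules out $(j, jk)$-flexes, so the rigidity order is at most $k$, completing the first part. The ``moreover'' clause is then immediate from Theorem~\ref{thm: order jk-flex}, since a $(1,k)$-flex contributes $(k+1)/1 = k+1$ to the maximum.

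To prove the lemma I adapt the reparameterization idea from Lemma~\ref{lem:kcomp}. Start with a $(j,jk)$-flex $\p(t)$, and let $v$ be a unit vector spanning $K$. The flex equations at orders $j, j+1, \ldots, 2j-1$ simplify (because $\p^{(i)} = 0$ for $1 \le i \le j-1$) to $R(\p)\p^{(l)} = 0$, so $\p^{(l)} \in K$ in this range; in particular $\p^{(j)}/j! = \alpha v$ for some $\alpha \ne 0$. After replacing $v$ by $-v$ if needed (using that trajectories are parameterized by $t \in [0,\eps]$, so signs can be handled even when $j$ is even), I set $\tau(t) := (\p(t)-\p)\cdot v = t^j g(t)$ with $g(0) > 0$, and define $\sigma(t) := t \cdot g(t)^{1/j}$, which is analytic with $\sigma'(0) \ne 0$. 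Its analytic inverse $t(\sigma)$ produces a reparameterized flex $\q(\sigma) := \p(t(\sigma))$ satisfying $(\q(\sigma)-\p)\cdot v = \sigma^j$ identically.

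The key claim, proved by induction on $l$, is that $\q^{(l)} = 0$ for every $l$ that is not a positive multiple of $j$. For such $l$, the identity $(\q(\sigma)-\p)\cdot v = \sigma^j$ gives $\q^{(l)} \cdot v = 0$, i.e., $\q^{(l)} \in \overline{K}$. Meanwhile, by the inductive hypothesis, all nonzero Taylor coefficients of $\q$ below level $l$ live at indices that are multiples of $j$, so in the $l$th flex equation $\sum_{a=0}^{l}\binom{l}{a}(\q^{(a)}_i - \q^{(a)}_j)\cdot(\q^{(l-a)}_i - \q^{(l-a)}_j) = 0$, the only surviving cross-terms are those with $a=0$ and $a=l$, because $l$ cannot be written as a sum of two (nonzero) multiples of $j$. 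This collapses the equation to $R(\p)\q^{(l)} = 0$, giving $\q^{(l)} \in K$. Combined with $\q^{(l)} \in \overline{K}$, we get $\q^{(l)} = 0$. Consequently $\q(\sigma)$ depends on $\sigma$ only through $\sigma^j$, so $\r(s) := \q(s^{1/j})$ is a \emph{genuinely analytic}, $1$-active trajectory. A final order count: $\m(\q(\sigma))$ vanishes to order $jk+1$ and involves only powers of $\sigma^j$, hence vanishes to order $\ge j(k+1)$, which translates to $\m(\r(s))$ being $k$-vanishing, so $\r$ is a $(1,k)$-flex.

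The main obstacle is the combinatorial bookkeeping in the induction: one must verify carefully that for each non-multiple-of-$j$ level $l$, every cross-term in the flex equation contains at least one factor $\q^{(a)}$ with $0 < a < l$ and $a$ not a multiple of $j$, hence vanishes by the inductive hypothesis. The sign/parity issue for even $j$ is a minor technicality, resolved by the freedom to flip the sign of $v$.
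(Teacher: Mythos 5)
Your proposal is essentially correct, but it takes a genuinely different route from the paper. The paper proves Theorem~\ref{thm:main2} by critical-point analysis of a stiff-bar energy: it builds the family of test trajectories of Lemma~\ref{lem:standInd} from the given $(1,k-1)$-flex, shows it is indicative at order $2k$, and then uses Proposition~\ref{prop:indic}, Lemma~\ref{lem:isactiveK}, Theorem~\ref{thm:coin} and Lemma~\ref{lem:eflex} to convert ``rigidity order $>k$'' into a $(1,k)$-flex; in particular this yields a new, self-contained proof of Alexandrov's Theorem~\ref{thm:alex}. You instead cite Theorem~\ref{thm:alex} for rigidity, invoke Theorem~\ref{thm: order jk-flex} to reduce everything to flexes, and supply a new flex-level lemma: if $\dim(K)=1$, a $(j,jk)$-flex can be reparameterized (forcing its $K$-component to be exactly $\sigma^j$) and then, by induction through the flex equations, purged of all Taylor coefficients at orders that are not multiples of $j$, after which the substitution $s=\sigma^j$ produces a $(1,k)$-flex. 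The reduction is sound: a $(j,k')$-flex with $(k'+1)/j>k$ is in particular a $(j,jk)$-flex, so your lemma gives the upper bound, and the lower bound and the ``moreover'' clause follow from Theorem~\ref{thm: order jk-flex} exactly as you say. Your induction itself is correct: for $l$ not a multiple of $j$ (and $l\le jk$), every middle term of the order-$l$ flex equation contains a factor $\q^{(a)}$ with $a$ a non-multiple of $j$ below $l$, so the equation collapses to $R(\p)\q^{(l)}=0$, and $\q^{(l)}\in K\cap\{\v:\v\cdot\p'=0\}=\{0\}$ — this is exactly where $\dim(K)=1$ enters, as it must, given the double-Watt example. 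Two remarks on the comparison: your lemma by itself already implies Theorem~\ref{thm:alex} (a flexible framework has a $(j,\infty)$-flex, hence a $(j,jk)$-flex, hence a $(1,k)$-flex), so the citation of Alexandrov could be dropped, making your route an alternative elementary proof of that theorem as well; on the other hand, your route leans on Theorem~\ref{thm: order jk-flex}, whose proof already carries the analytic/energy machinery, whereas the paper's proof exhibits the energy mechanism (the $2k$ derivative test) directly, which is the point of the paper.

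There is one overreach that needs a patch. Your key claim is stated as ``$\q^{(l)}=0$ for every $l$ not a positive multiple of $j$,'' but the flex equations for $\q(\sigma)$ are only guaranteed through order $jk$ (that is all that $k$-vanishing of $\m(\p(t))$, transported by Lemma~\ref{lem:vanish1}, gives you), so the induction stops at $l=jk$. Indeed the all-$l$ claim is false in general: adding $t^{jk+1}\w$ to a $(j,jk)$-flex leaves it a $(j,jk)$-flex but ruins the higher coefficients. Consequently the step ``$\q(\sigma)$ depends on $\sigma$ only through $\sigma^j$, so $\r(s):=\q(s^{1/j})$ is genuinely analytic,'' and the final order count based on it, do not follow as written. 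The fix is routine: after the induction, truncate $\q$ at order $jk$ (Remark~\ref{rem:trunc} keeps it a $(j,jk)$-flex); by the induction the truncation is a polynomial in $\sigma^j$, so substituting $s=\sigma^j$ gives a polynomial, $1$-active trajectory $\r(s)$, and since $\m$ of the truncation is a $jk$-vanishing function of $\sigma$ involving only powers of $\sigma^j$, $\m(\r(s))$ is $k$-vanishing, i.e., $\r$ is a $(1,k)$-flex. With that adjustment your argument is complete.
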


\vspace{.1in}
\begin{mdframed}
  {
    Theorem~\ref{thm:main2} tells us that
    when $(G,\p)$ is rigid and $\dim(K)=1$,
    the algorithm from Theorem \ref{thm:eff} can be used to
    compute its rigidity order. We recall that
    for fixed $k$, the
    algorithm is efficient.
    (Since $k$ is not  bounded
      by a polynomial in $n$,
    the algorithm, while of practical use, is not ``polynomial time''.)
  }
\end{mdframed}

\begin{remark}
  We see from these theorems that a framework
  cannot have a non-integer rigidity order below
  $3$, and when $\dim(K)=1$ it must have an integer
  rigidity order.
  Generalizing beyond the context of rigidity theory,
  at a strict local minimum
  of any analytic function $E$, the tight growth order
  must be an integer if it is below $6$. When the
  nullity of the Hessian of $E$ is $1$ at a critical point $\p$, the
  tight growth order of $E$ at $\p$ must be an integer.
\end{remark}

\section{Critical point analysis and a 4th derivative test}
Our study of rigidity in this paper is done using
an energy function $E(\q)$, for which we are interested in showing
that $\p$ is a strict local minimum. When the Hessian of $E$ is
positive definite at $\p$ then the second derivative test
certifies a strict local minimum. However, higher-order rigidity
requires certifying a local minimum when the Hessian of $E$
is degenerate.

We develop such higher-order derivative tests, based on extending
the Taylor-expansions considered in the second-derivative test.
The second-derivative test can be viewed as a Taylor-expansion of
a function along \emph{lines} through the origin. For a
fourth-derivative test, we consider a Taylor-expansion along a
suitable family of \emph{parabolas} through the origin.
Higher-order derivative tests consider families of subsequently
higher-degree polynomials, though we discuss how these tests only
apply in certain restricted settings. 

In the general setup, we will have a sufficiently smooth
function $f$ of $n$ scalar variables.
We will assume without loss of generality that $f$ has a critical point
at the origin, at which $f(0)=0$.

\subsection{The $2$nd derivative test}
Let $f$ be a $C^2$ function 
over $\RR^N$
with a critical point at the origin.
Let $H$ be the Hessian matrix
of $f$ at $0$.
The \defn{second derivative test} from first year
calculus tells us  \cite{Nocedal}:
\begin{enumerate}[\rm (a)]
  \item If $H$ is positive definite (PD),
    then $0$ is a strict local minimum
    of $f$.
  \item If $H$ is negative definite (ND), then $0$ is a strict
    local maximum of $f$.
  \item If $H$ is indefinite (ID), then $0$ is a saddle of $f$.
  \item Otherwise, $H$ is either positive semi-definite (PSD) or
    negative semi-definite (NSD)
    and the test is inconclusive.
\end{enumerate}
In case (d), we say that the critical point is \defn{degenerate}.  If $H$ is
non-zero in case (d) there is some information: if it is non-zero
and PSD, then the
origin cannot be a strict or weak local maximum; similarly if it
is  non-zero and
NSD, then the origin cannot be a strict or weak local minimum.
To go further requires some kind of higher order derivative test
(assuming that $f$ is sufficiently smooth).

\subsubsection{The problem with extending the 2nd derivative test}
In the univariate setting, devising such a higher order derivative test
is straightforward.  A degenerate critical point has a zero
second derivative.
Thus a non-zero third derivative certifies a saddle. If the third
derivative vanishes, a positive fourth derivative certifies a
strict local minimum while
a negative fourth derivative certifies a
strict local maximum, etc.

The multivariate setting is much more subtle.
One way to understand the second derivative test is through the multivariate
Taylor's theorem. When the Hessian is
PD, then the second-order approximation of $f$ is increasing
at second-order,
and the magnitude of the remainder term grows at order higher
than two.
As a result,  the remainder term
cannot overcome the second-order growth in a
neighborhood of the origin.

What happens when we look at, say, the fourth order approximation
of a function $f$? We will see next that an inhomogeneous
fourth order approximation might grow in all directions, but
this growth may actually be slower than fourth order! When that happens,
its behavior can still be dominated by the remainder term.

\begin{figure}
    \includegraphics[width=0.48\linewidth]{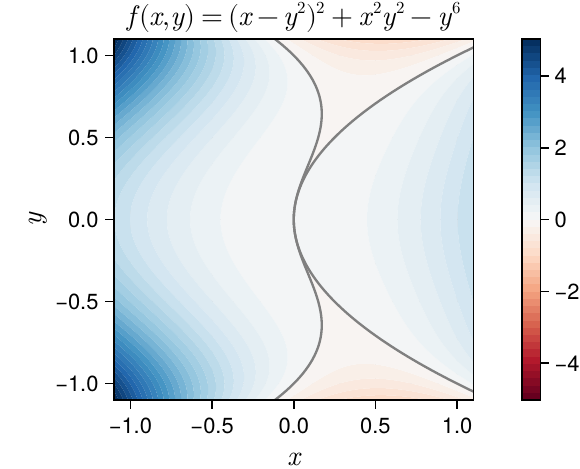}
    \includegraphics[width=0.48\linewidth]{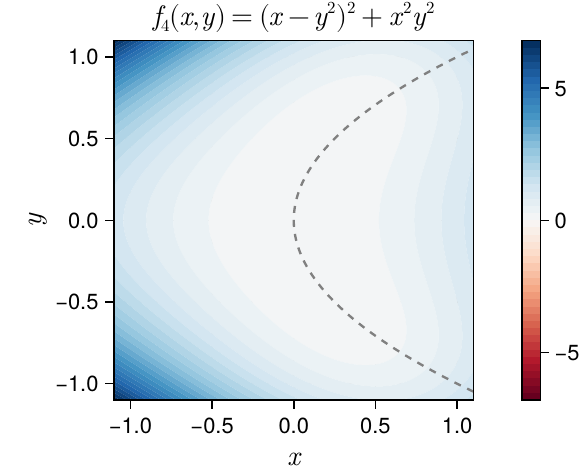}
    \caption{Left: A sixth order polynomial that has a saddle at the origin. The zero set is shown as solid curves.  Right: Its fourth degree Taylor approximation has a strict minimum at the origin. This fourth degree polynomial only grows at 6th order 
    at the origin along the shown dashed curve.}\label{fig:ex6.1}
\end{figure}

\begin{example}
Consider the function
\bna
\label{eq:rud}
f(x,y):=(x-y^2)^2 + x^2y^2 - y^6.
\ena
This function  has a saddle
at the origin (Figure  \ref{fig:ex6.1}(left)).
To see this directly, one can first show that $f$ factors as
  $f(x,y)=(x-y^2)g(x,y)$ with $g(x,y)=x-y^2+y^2(x+y^2)$. Choose a
  point on the parabola $x=y^2$ where $g(x,y)\neq 0$ and which is
  arbitrarily close to the origin. Then change $x$ by a small
  amount in either direction, to show that $f$ must cross zero near
this point.

Now consider an approximation to this function which keeps all
terms up to 4th order in its Taylor expansion (Figure  \ref{fig:ex6.1}(right)):
\ba
f_4(x,y):=(x-y^2)^2 + x^2y^2 .
\ea
This approximation has a strict
local minimum at $0$: The first term is positive except
on the parabola $x=y^2$, and the
second term increases the function
everywhere except at the origin.
Therefore, simply considering the fourth-order Taylor expansion
of a multivariate function is not sufficient to determine its
behavior near a degenerate critical point.

The problem is that
$f_4(x,y)$  actually grows slower than
fourth order in $|(x,y)|$ along some curve.
For example, along the trajectory $(x(t),y(t))=(t^2,t)$, we have
$f_4(x(t),y(t))=t^6$ (Figure  \ref{fig:ex6.1}(right)). Thus,
adding a higher order term to $f_4$
can change the nature of the critical point.
(This behavior does not
  arise with the second-derivative test,
  where at a critical point, the
second-order approximation is homogeneous.)
\end{example}

\vspace{.1in}
An alternative way to understand the
multivariate
second derivative test is by thinking
of it as an analysis of the family of
all univariate slices arising from restricting
$f$ to  lines through the origin.
When the Hessian is, say, PD,
each univariate slice will have a zero first derivative and
positive second derivative,
implying that the function increases in every direction.
From this perspective, as well, the multivariate higher order setting
presents additional subtleties.

\begin{figure}
\centering
    \includegraphics[width=0.48\linewidth]{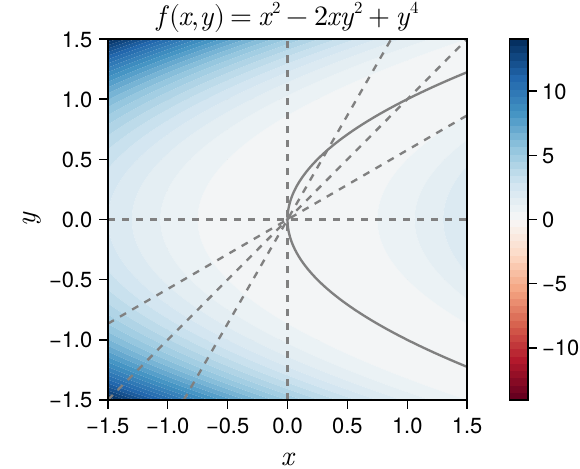}
    \caption{A fourth degree polynomial that has a non-strict local minimum at 
    the origin. The zero locus is shown as a solid curve. Notably, this function has a strict (univariate) local minimum along any linear trajectory
    through the origin (dashed lines).
    }\label{fig:ex6.2}
\end{figure}

\begin{example}
Consider the function
\ba f(x,y):=(x-y^2)^2=x^2-2xy^2+y^4
\ea
The origin is a degenerate critical point. The Hessian is PSD
with the $y$-axis
as its kernel direction. One might try to
analyze the critical point
by looking slice-by-slice
at all lines through the origin (Figure  \ref{fig:ex6.2}).
Doing this we find that $f$ has a strict local minimum in the
$y$ direction (due to the  $y^4$ term). We also find that
$f$ is a strict local minimum in any slice that is not the
$y$ direction (due to the dominating $x^2$ term).
But we would be wrong to
conclude that $f$ has a strict local minimum in two dimensions at
the origin. In fact $f$ is $0$ at any point on the parabola
$x=y^2$, and so the origin is actually a non-strict local minimum!

What went wrong in this reasoning?
As we look at lines
that are
getting closer and closer to the $y$-axis, we are finding a
univariate slices of $f$ that are staying positive, as we leave the
origin, but only for  shorter and shorter distances from the origin (Figure  \ref{fig:ex6.2}).
In particular,
we are unable
to obtain a uniform bound for the radius of some punctured disk
in 2D about the origin where
$f$ stays positive.
At the root of this problem is that as we look
at different slices, the leading degree of the
univariate polynomial in the slice changes,
from second degree generically, to fourth degree when we get
exactly to the $y$-axis.
If we had a uniform leading degree
for all of our slices, we would have obtained our needed uniform
bound.  In the next section
we will pursue this idea further and see
how we can come up with higher order derivative tests,
by using curves instead of lines.
\end{example}

\subsection{Indicative trajectories}
\label{sec:ind}

As just described,
we can interpret the second derivative tests as
probing $f$ using a set of ``test trajectories'' corresponding to
lines
through the origin.
The family of trajectories are parameterized
by a point $\a^{(1)}$ on the
unit sphere in $\RR^N$  as follows:
\ba
\x(t;\a^{(1)}) := \a^{(1)} t .
\ea
Given $\a^{(1)}$, we then consider
\ba
g(t) := f(\x(t;\a^{(1)})) .
\ea
If, for all $\a^{(1)}$,
$g(t)$ is positive for $t\neq 0$
(resp. negative/mixed-sign) up
through second-order in $t$,
then $f(\x)$ must have an strict local minimum
(resp. strict local maximum/saddle)
at the origin.
Otherwise,
the test is inconclusive.

We will show how to generalize
the second derivative test by identifying a suitable set of
test trajectories.  The key properties
that the set must have
are captured by the next
definition.
\begin{definition}
  \label{def:indic}
  Let $k$ be a positive integer.
  Let $f:\R^N\to \R$ be a
  $C^{2k+1}$  function
  with a critical point at the origin and with
  $f(0)=0$.
  Let $S$ be a set of parameter values.
  A \defn{family of test trajectories} $\{\x(t;\a)\}$
  is a set of trajectories
  defined over a shared interval
  $t \in [0,\eps]$
  (see Definition \ref{def: trajectory}), such that,
  for fixed $\a \in S$, the function $\x(t;\a)$
  is an analytic (in $t$), non-constant trajectory, with $\x(0,\a) = 0$.

  A family of test trajectories is \defn{indicative at order $2k$ for $f$}
  if it has the following properties:
  \begin{enumerate}[\rm ({I}1)]
    \item
      The parametrization set $S$ is compact.
    \item
      The function $\x(t;\a)$ admits a 
      $C^{2k+1}$ extension over an open neighborhood of $[0,\eps] \times S$.
    \item The family $\{\x(t;\a)\}$ of trajectories
      \defn{initially covers}
      a neighborhood of the origin: for every
      $0<\delta \le \eps$, there is a
      neighborhood $U$ of the origin, such that, for all $\x\in
      U$, there is an $\a \in S$
      and a
      $t\in [0,\delta]$, such that $\x = \x(t;\a)$.

    \item For all test trajectories $\x(t;\a)$, the Taylor
      expansion in $t$ at the origin
      is of the form
      \[
        f(\x(t;\a)) =  a_{2k}(\a) t^{2k} + o(t^{2k}).
      \]
      That is, all of the terms below order $2k$ are zero.
      In other words, for all $\a \in S$,
      $f(\x(t;\a))$ is $2k-1$-vanishing
      and also has a zero constant term.
      Thus $\x(t;\a)$ is a $(j,2k-1)$ f-flex
      for some $j$.
  \end{enumerate}
\end{definition}

The next proposition says that we can
use a family of indicative trajectories
to analyze a critical point. The ideas
behind this proposition can be seen
in~\cite[Proof of Theorem 1]{cushing}.

\begin{proposition}
  \label{prop:indic}
  Let $k$ be a positive integer.
  Suppose  $f(\x)$
  is a $C^{2k+1}$
  function
  with a critical point at the origin,
  and $f(0)=0$.
  Suppose we have
  family  $\{\x(t;\a)\}$ of test
  trajectories that is indicative at
  order $2k$ for $f$.
  \begin{enumerate}[\rm (a)]
    \item
      If for all parameter settings $\a$,
      $a_{2k}(\a)$ is positive,
      then there exists a $c > 0$ and a
      neighborhood $U$ of the origin
      such that for all $\x\in U$,
      $f(\x) \geq c|\x|^{2k}$.  In
      particular, $f$ has a strict local minimum
      at the origin.

    \item
      If for all parameter settings $\a$,
      $a_{2k}(\a)$ is negative,
      then there exists a $c > 0$ and a
      neighborhood $U$ of the origin
      such that for all $\x\in U$,
      $f(\x) \le -c|\x|^{2k}$.  In
      particular, $f$ has a strict local maximum
      at the origin.
    \item
      If  $a_{2k}(\a)$ is positive for
      for some parameter values $\a$, and negative for others,
      then the origin is a saddle point of $f$.
    \item Otherwise, $a_{2k}(\a)$ is either
      identically zero, or is
      sometimes positive and sometimes zero, or
      is  sometimes negative and sometimes zero.
      In any of these cases, the test is inconclusive.
  \end{enumerate}
\end{proposition}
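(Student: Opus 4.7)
The plan is to promote the pointwise asymptotic $f(\x(t;\x_0))=a_{2k}(\x_0)t^{2k}+o(t^{2k})$ to a uniform inequality over $\x_0\in S$, and then invoke the initial-covering property (I3) to package the trajectory bound into a bound on $f(\x)$ over an honest neighborhood of $0$. For case (a), I would proceed in four steps. First, joint $C^{2k+1}$ regularity of $f\circ\x$ makes $a_{2k}(\x_0)=\frac{1}{(2k)!}\partial_t^{2k}(f\circ\x)(0;\x_0)$ continuous on $S$, so compactness (I1) gives $c_0:=\min_{\x_0\in S}a_{2k}(\x_0)>0$. Second, the uniform bound $R:=\sup_{S\times[0,\eps]}|\partial_t^{2k+1}(f\circ\x)|$ exists by compactness, and the Lagrange form of Taylor's theorem combined with (I4) gives
\[
f(\x(t;\x_0))\ \geq\ a_{2k}(\x_0)t^{2k}-\tfrac{R}{(2k+1)!}t^{2k+1}\ \geq\ \tfrac{c_0}{2}t^{2k}
\]
for all $\x_0\in S$ and $t\in[0,\delta_0]$, where $\delta_0:=\min\{\eps,\,c_0(2k+1)!/(2R)\}$ does not depend on $\x_0$. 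Third, since $\partial_t\x$ is continuous on the compact set $S\times[0,\eps]$, it is bounded by some $M$, hence $|\x(t;\x_0)|\leq Mt$ by integrating from $\x(0;\x_0)=0$. Fourth, applying (I3) to $\delta_0$ produces a neighborhood $U\ni 0$ on which every $\x$ equals some $\x(t;\x_0)$ with $t\in[0,\delta_0]$, yielding $f(\x)\geq \tfrac{c_0}{2}t^{2k}\geq \tfrac{c_0}{2M^{2k}}|\x|^{2k}$, and hence a strict local minimum.

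Case (b) follows by applying (a) to $-f$ (the same family is indicative for $-f$ with coefficient $-a_{2k}$). For case (c), pick $\x_0^\pm\in S$ with $a_{2k}(\x_0^\pm)$ of opposite signs; the asymptotic guarantees that $f(\x(t;\x_0^\pm))$ has the sign of $a_{2k}(\x_0^\pm)$ for all sufficiently small $t>0$, and since $\x(t;\x_0^\pm)\to 0$ we obtain points of each sign arbitrarily close to the origin, so $0$ is a saddle. For case (d) it suffices to note that when $a_{2k}$ has zeros, the $2k$-th order information is silent along the corresponding trajectories; simple examples obtained by perturbing the fourth-order approximation in \eqref{eq:rud} realize strict minima, weak minima, and saddles all consistent with the same vanishing behavior of $a_{2k}$, so the test is genuinely inconclusive.

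The main obstacle is that the naive trajectory-by-trajectory bound $f(\x(t;\x_0))\geq \tfrac{c_0}{2}t^{2k}$ only holds up to a time $\delta(\x_0)$ that could \emph{a priori} shrink to $0$ as $\x_0$ varies; this is exactly the pathology illustrated by $(x-y^2)^2$ restricted to lines through the origin in the preceding subsection. Hypotheses (I1) and (I2), compactness of $S$ and joint $C^{2k+1}$ smoothness, are precisely what is needed to choose a single $\delta_0$ and a single Lipschitz constant $M$ that work for every $\x_0\in S$; once that uniform control is in place, (I3) converts the trajectory bound into the desired neighborhood bound in one line.
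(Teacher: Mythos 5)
Your proposal is correct and follows essentially the same argument as the paper's proof: continuity of $a_{2k}$ plus compactness of $S$ give a uniform positive lower bound, a uniform bound on the $(2k+1)$st $t$-derivative with the Lagrange remainder gives $f(\x(t;\x_0))\ge \tfrac{c_0}{2}t^{2k}$ for a $\delta_0$ independent of $\x_0$, a uniform bound $|\x(t;\x_0)|\le Mt$ converts this to a bound in $|\x|$, and (I3) packages it into a neighborhood; (b) by negation and (c) by pointwise Peano asymptotics are likewise identical. The only (immaterial) difference is that you bound $|\x(t;\x_0)|$ by integrating a uniform bound on $\partial_t\x$, whereas the paper applies Taylor's theorem to $|\x(t;\x_0)|^2$ using a uniform bound on its second derivative.
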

\begin{proof}
  {\bf (a):}  Assume that $a_{2k}(\a) > 0$ for all
  parameters $\a$.
  The function $f$ is in $C^{2k+1}$ and
  \[
    a_{2k}(\a) =
    \left.\frac{1}{(2k)!}\frac{d^{2k}}{dt^{2k}}f(\x(t;\a))\right|_{t=0}.
  \]
  Because $f$ and $\x(t;\a)$ are $C^{2k + 1}$, $a_{2k}(\a)$
  is continuous
  in $\a$.  Hence, since $a_{2k}(\a)$ is assumed to be
  always positive, its minimum value, $a$, over the compact
  set $S$ is positive.
  Let us define
  \ba
  b = \sup_{x\in S, t_0\in [0,\eps]}
  \left|\frac{1}{(2k+1)!}\left.\frac{d^{2k+1}f(\x(t;\a))}
  {dt^{2k+1}}\right|_{t=t_0}\right|.
  \ea
  By compactness of $S\times [0,\eps]$ and smoothness of $f$ and $\x(t;\a)$,
  $b$ is finite.
  (This is the step where we rely on all the
    $2k+1$
  orders of continuity.)

  For any $\a$,  the univariate Taylor Theorem (with the
  remainder in Lagrange form)
  implies
  that
  \[
    f(\x(t;\a)) \ge  a_{2k}(\a)t^{2k} - bt^{2k+1} \ge at^{2k} - bt^{2k+1}.
  \]
  For small enough $\delta > 0$, the right hand side is at least
  $\frac{1}{2}at^{2k}$. This $\delta$ does not depend on $\a$.
  Hence, there is a $0<\delta < 1$,
  so that for $t\in[0,\delta]$
  and for all $\a$,
  \[
    f(\x(t;\a)) \ge \frac{a}{2} t^{2k}.
  \]
  Now we derive a uniform upper bound on
  $|\x(t;\a)|^2$ in terms of of $t$.  We
  define
  \[
    A =
    \sup_{t_0\in [0,\eps], \a\in S}
    \left\{
      \left.\frac{d^2}{dt^2} |\x(t;\a)|^2\right|_{t=t_0}
    \right\}.
  \]
  By smoothness of the norm and the parameterization, and
  compactness of $S$, along with the compactness
  of $[0,\eps]\times S$,  $A$ is finite.
  The first-order
  univariate Taylor
  expansion
  of
  $|\x(t;\a)|^2$
  is $0$, and using its
  (second-order) remainder in Lagrange
  form, we get, for all $\a\in S$, and
  $t \le \eps$,
  \[
    |\x(t;\a)|^2 \le At^2.
  \]
  Combining with our estimate on $f$, we get
  for $t \le \delta$ and all $\a\in S$:
  \[
    f(\x(t;\a)) \ge \frac{a}{2} t^{2k}
    \ge
    \frac{a}{2A^k}|\x(t;\a)|^{2k}.
  \]

  Finally, since we have a set of indicative trajectories, the
  initial coverage condition (I3) implies that there is a
  neighborhood $U$ of the origin, so that, for all $\x\in U$,
  there is a $t\in [0,\delta]$ and $\a\in S$,
  such that $\x = \x(t;\a)$.  We take this
  $U$ to be the one in the conclusion, since $\x\in U$
  implies that
  \[
    f(\x)
    \ge
    \frac{a}{2A^k}|\x|^{2k}.
  \]
  which proves
  (a).

  {\bf (b):} Apply (a) to $-f$.

  {\bf (c):} This case is easier, because we don't
  need a uniform bound over $S$.  Suppose that there
  is an $\a\in S$ such that $a_{2k}(\a) > 0$.  By
  the univariate Taylor theorem with Peano remainder
  \[
    f(\x(t;\a)) = a_{2k}(\a)t^{2k} + o(t^{2k}).
  \]
  For all sufficiently small $t > 0$, the right hand side has the same sign
  as $ a_{2k}(\a)t^{2k} > 0$; i.e., $ f(\x(t;\a)) > 0$.
  Similarly, if $a_{2k}(\a) < 0$, for all sufficiently small
  $t > 0$, $ f(\x(t;\a)) < 0$.  This shows that the origin
  is a saddle point of $f$.
\end{proof}

\subsection{A  $4$th derivative test}
\label{Sec:4test}

We now describe a derivative test that may be applied when the
Hessian of $f$ is degenerate. The key step is to construct a family
of  trajectories that are indicative at order $4$. Then,
Proposition \ref{prop:indic} may be used to determine the nature of
the critical point.
In the special case that $\dim(K)=1$,
this is the same as the test described by
Cushing~\cite{cushing}.

Let $f : \mathbb{R}^{n+m}\to \mathbb{R}$ be a $C^5$ function
of $n+m$ variables with a critical point at the origin.
Let us assume
\begin{itemize}
  \item[(A1)] The second derivative test is inconclusive; i.e.,
    the Hessian of $f$ at the origin is PSD of rank $n$ and nullity
    $m$, with $m\ge 1$.
\end{itemize}
In this section, we write a vector in $\RR^{n + m}$ as $(\x,\y)$,
where $\x = (x_1, \ldots, x_n)$ and $\y = (y_1, \ldots, y_m)$.  We further
assume:
\begin{itemize}
  \item[(A2)] The kernel of the Hessian $H_f(0)$ of $f$ at the origin is the
    subspace of vectors of the form $(0,\y)$.
\end{itemize}
If $f$ is any $C^5$ function $f : \mathbb{R}^{n+m}\to \mathbb{R}$,
satisfying (A1),
one can find a
basis $\{\v_1, \ldots, \v_n, \u_1, \ldots, \u_m\}$ of $\RR^{n+m}$
with the property that
the $\u_j$ span the kernel of $H_f(0)$.  After changing to this basis,
the Hessian will satisfy assumption (A2).

The assumption (A2) implies that in the Taylor expansion of $f$,
all degree two terms are of type $x_ix_j$.

We consider the following family
of test trajectories $\{(\x(t;\a^{(2)}),\y(t;\a^{(1)}))\}$, where
\bna
\label{eq:k2famXY}
{\x}(t;\a^{(2)}) &:=& \a^{(2)} t^2 \\
{\y}(t;\a^{(1)})&:=& \a^{(1)} t\label{eq:k2famXYb}
\ena
with $(\a^{(2)},\a^{(1)})$ on the unit sphere in
$\RR^{n+m}$, i.e. $|\a^{(2)}|^2+|\a^{(1)}|^2=1$.
Our motivation is to alter the order of the ``quickly growing''
directions $\x$, so that $f$ changes
at third order or higher in $t$.

We will be interested in the behavior of $f$ along these
trajectories. To this end, define a parameterized set of univariate functions
\begin{equation}\label{gt}
  g(t;\a^{(2)},\a^{(1)}) := f(\x(t;\a^{(2)}),\y(t;\a^{(1)})).
\end{equation}
Consider the Taylor expansion of $g$ in $t$ at $t=0$.
This is related to the Taylor expansion of $f$ in $\x,\y$. By our
construction of $\x,\y$, the only degree two terms in the Taylor
expansion of $f$ are of the form $x_ix_j$, which contribute to
terms of degree at least $4$
in the Taylor expansion of $g$.
The degree three terms in the Taylor expansion of $f$ are of the
form $y_iy_jy_k,x_iy_jy_k,x_ix_jy_k,x_ix_jx_k$, and have orders
$t^3,t^4,t^5,t^6$ respectively; the degree four term $y_iy_jy_ky_l$
has order $t^4$ and all other degree four terms are higher-order.
Hence, the Taylor expansion of $g$ has the form
\[
  g(t;\a^{(2)},\a^{(1)}) = a_3t^3 + a_4t^4 + o(t^4).
\]
The third-order coefficient $a_3(\a^{(2)},\a^{(1)})$ may be nonzero for
some parameter value  $\a^{(2)},\a^{(1)}$, only if there exists a term of
the form $y_iy_jy_k$ in the Taylor expansion of $f$.
When there is no such term, then $a_3(\a^{(2)},\a^{(1)})=0$ for all
parameter values $\a^{(2)},\a^{(1)}$, so all the terms in the Taylor
expansion of $g$ have degree at least
$4$.  With a little more work we can show that \eqref{eq:k2famXY}
is a set of indicative trajectories:
\begin{lemma}
  \label{lem:ind2a}
  Suppose (A1) and (A2) hold,
  and consider the third-order Taylor expansion of $f(\x,\y)$ in
  $\x,\y$ at the origin.
  If there is a term of the form $y_iy_jy_k$, or equivalently if
  $a_3(\a^{(2)},\a^{(1)})\neq 0$ for some $\a^{(2)},\a^{(1)}$, then the origin is a
  saddle point for $f$.
  Otherwise, the family of test trajectories
  $\{(\x(t;\a^{(2)}),\y(t;\a^{(1)}))\}$ is indicative  at order $4$ for $f$.
\end{lemma}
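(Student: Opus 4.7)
The plan is to first compute the Taylor expansion of $g(t;\x_0,\y_0) := f(\x_0 t^2, \y_0 t)$ around $t=0$ by substituting the trajectory into the Taylor expansion of $f$. Using assumption (A2), all degree-two terms of $f$ are of the form $x_i x_j$, which become order $t^4$. A monomial of total degree three mixing $p$ factors of $y$ and $q$ factors of $x$ (with $p+q=3$) becomes order $t^{p+2q}$, which is $\ge 4$ unless $q=0$, i.e., unless the monomial is of pure-$y$ type $y_i y_j y_k$. Degree-four and higher Taylor terms of $f$ all contribute at order $\ge t^4$. Therefore $a_3(\x_0,\y_0)=Q(\y_0)$, where $Q$ is the cubic form on $\RR^m$ arising from the $y_i y_j y_k$ terms of $f$, yielding the claimed equivalence.

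Next I would handle the saddle case. Assuming $Q\not\equiv 0$, choose $\y_0^+$ with $Q(\y_0^+)>0$; then $Q(-\y_0^+)<0$ by odd-degree homogeneity. Along the trajectories $(0,\pm \y_0^+ t)$, which pass arbitrarily close to the origin for small $t>0$, we have $f(0,\pm \y_0^+ t)=\pm Q(\y_0^+)\,t^3+o(t^3)$, which is positive (resp.\ negative) for all sufficiently small $t>0$. Since $f(0)=0$ and $f$ assumes both signs in every neighborhood of the origin, the origin is a saddle.

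For the indicative case, assume $Q\equiv 0$ and check (I1)--(I4). Take $S$ to be the unit sphere of $\RR^{n+m}$, which is compact, giving (I1); and because the trajectories are polynomial in $t$ with coefficients polynomial in $(\x_0,\y_0)$, property (I2) is immediate. For (I4), $g(0;\x_0,\y_0)=f(0)=0$ and $g'(0;\x_0,\y_0)=0$ since $\nabla f(0)=0$; the Taylor computation above shows $g''(0;\x_0,\y_0)=0$ (quadratic terms of $f$ contribute only at order $t^4$) and $g'''(0;\x_0,\y_0)=0$ (the only potential order-$t^3$ contribution would come from pure-$y$ cubic terms, which vanish by the $Q\equiv0$ hypothesis). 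Hence $g(t;\x_0,\y_0)=a_4(\x_0,\y_0)\,t^4+o(t^4)$.

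The main obstacle is (I3), initial coverage. Given $(\x,\y)$ near the origin, I need to exhibit $t\ge 0$ and unit $(\x_0,\y_0)$ with $\x=\x_0 t^2$ and $\y=\y_0 t$. If $(\x,\y)=0$, take $t=0$ and any unit $(\x_0,\y_0)$. Otherwise, the unit-norm condition rearranges to the quartic $t^4-\|\y\|^2 t^2-\|\x\|^2=0$; substituting $u=t^2$, the unique non-negative root is
\[
u \;=\; \tfrac{1}{2}\bigl(\|\y\|^2+\sqrt{\|\y\|^4+4\|\x\|^2}\bigr).
\]
Setting $t=\sqrt{u}$, $\x_0=\x/t^2$, and $\y_0=\y/t$ (with the conventions $\x_0=0$ when $\x=0$ and $\y_0=0$ when $\y=0$ to cover the degenerate subcases), the defining quartic gives $\|\x_0\|^2+\|\y_0\|^2=\|\x\|^2/u^2+\|\y\|^2/u=1$. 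Since $u\to 0$ as $(\x,\y)\to 0$, the corresponding $t$ tends to zero, so any prescribed $\delta>0$ is achieved for $(\x,\y)$ in a sufficiently small neighborhood of the origin, verifying (I3). Invoking Proposition~\ref{prop:indic} is not needed here; the two cases together prove the lemma.
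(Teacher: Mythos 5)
Your proposal is correct, and its overall architecture matches the paper's: expand $g(t;\x_0,\y_0)$ in $t$, observe that (A2) forces the quadratic terms $x_ix_j$ to enter only at order $t^4$ so that $a_3$ is exactly the pure-$y$ cubic form $Q(\y_0)$, conclude a saddle when $Q\not\equiv 0$, and otherwise verify (I1)--(I4) directly. The two places where you diverge are minor but worth noting. For the saddle case, the paper invokes the polarization identity to produce a direction with $a_3\neq 0$ and then asserts the saddle; you instead argue via the odd homogeneity of $Q$, evaluating along $(0,\pm\y_0^+t)$ to exhibit both signs explicitly, which is if anything a cleaner justification of the sign change that the paper leaves implicit. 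For initial coverage (I3), the paper introduces the family of ellipsoids $|\x|^2+s^2|\y|^2-s^4=0$, shows the unit sphere maps onto each one under $(\x_0,\y_0)\mapsto(\x_0 s^2,\y_0 s)$, and finds the required $t$ by an interior/exterior continuity (intermediate value) argument; you solve the same equation in closed form, reducing $t^4-\|\y\|^2t^2-\|\x\|^2=0$ to a quadratic in $u=t^2$ and checking the unit-norm identity $\|\x\|^2/u^2+\|\y\|^2/u=1$ algebraically. Your explicit root makes the dependence of $t$ on $(\x,\y)$ (and hence the smallness of $t$ on a small neighborhood) completely transparent, whereas the paper's ellipsoid argument is the one that generalizes verbatim to the higher-order families of Lemma~\ref{lem42} and Lemma~\ref{lem:standInd}, where no closed-form root is available; one small wording quibble is that your chosen root is the unique \emph{positive} root rather than the unique non-negative one when $\x=0$, but your conventions for the degenerate subcases already handle this.
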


\begin{remark}\label{rem:minNoy3}
We see from this lemma that if we already know that $0$ is not a saddle, then
there can be no $y_iy_jy_k$ terms, and we do not have to do this check.
\end{remark}

\begin{remark}\label{remark:taylor4}
  For $g(t)$
  expanded through 4th order in $t$, the
  only terms of the Taylor series of $f$ that
  contribute at this order are of the type
  $x_i x_j$, $x_i y_j y_k$,
  and $y_iy_jy_iy_l$. In particular, we are not
  exploring the behavior of the full 4th order
  expansion of $f$, which can include terms of the
  form $x_ix_j x_k$,
  $x_ix_j y_k$,
  $x_i x_j x_k x_l$,
  $x_i x_j x_k y_l$,
  $x_i x_j y_k y_l$, and
  $x_i y_j y_k y_l$.
\end{remark}

\begin{proof}[Proof of Lemma \ref{lem:ind2a}]
  If there is a
  term of the form $y_iy_jy_k$ in the Taylor expansion for $f$,
  then the polarization identity,
  applied to the $3$rd derivative tensor, implies that there is a
  linear trajectory $\y(t)$ at the origin of the form
  $\y(t) = \a^{(1)} t$ such that the leading order term
  in the Taylor expansion of $f(0,\y(t))$
  in $t$ is of the form $a_3t^3$, with $a_3\neq 0$.  We
  see immediately that this implies that the origin
  is a saddle of $f$.

  For the statement about equivalence, if there are parameter
  values such that $a_3\neq 0$, then this can only come from
  terms of the form $y_iy_jy_k$ in the Taylor expansion for $f$.

  Now suppose there is no term of the form $y_iy_jy_k$ in the
  Taylor expansion for $f$. We check properties (I1)-(I4) of
  Definition \ref{def:indic}.

  (I1) The parameter space is a sphere, so it is compact.

  (I2) The parameterization is clearly smooth as a function
  $[0,\eps]\times S^{n+m-1}\to \RR^{n+m}$.

  (I4) follows by our assumption on the lack of terms of the form
  $y_iy_jy_k$.

  (I3) Now we show initial coverage for our family.
  To this end,
  for a fixed $s > 0$, define the ellipsoid $Q_s$
  over $\RR^{n+m}$, with coordinates $(\x,\y)$,
  to be the points that satisfy
  \ba
  |\x|^2+s^2 |\y|^2 -s^4 =0
  \ea
  The interior of $Q_s$ consists of the points
  $(\x,\y)$ where the left hand side is negative,
  while the exterior of $Q_s$ consists of the points
  $(\x,\y)$ where the left hand side is positive.

  Still keeping $s$ fixed, we define a  map from
  $\RR^{n+m}$ with coordinates $(\a^{(2)},\a^{(1)})$,
  to $\RR^{n+m}$ with coordinates $(\x,\y)$
  by
  \begin{align*}
  \x &:=\a^{(2)} s^2\\
  \y &:= \a^{(1)} s.
  \end{align*}
  The map is clearly a bijection.  The following
  calculation shows that under this  map, 
  $(\x,\y)\in Q_s$ if and only if $(\a^{(2)},\a^{(1)})$ is on the unit sphere $S^{n+m-1}$:
  \ba
  |\x|^2+s^2 |\y|^2 -s^4 &=&
  |\a^{(2)} s^2|^2+s^2 |\a^{(1)} s|^2 -s^4
  =s^4 (|\a^{(2)} |^2+|\a^{(1)} |^2-1).
  \ea

  Let $0<\delta <\eps $ be given.
  Let us consider the ellipsoid $Q_\delta$.
  For a small enough neighborhood $U$ of the origin,
  each point $(\x,\y) \in U$ is in the interior of $Q_\delta$.
  Meanwhile, for each point $(\x,\y) \in U$ and not equal to
  $0$, there must be a sufficiently small value
  $r>0$ so that $(\x,\y)$ is in the exterior of $Q_r$.
  Thus by continuity, there must be an intermediate
  value $r<t<\delta$ such that $(\x,\y)$ is on $Q_t$.
  Thus for this $t$, we have
  $\x=\a^{(2)} t^2$ and $\y=\a^{(1)} t$ for some
  $(\a^{(2)},\a^{(1)}) \in S^{n+m-1}$.
\end{proof}

\vspace{.1in}
\begin{mdframed}
  { Taken  together, Proposition~\ref{prop:indic} and
    Lemma~\ref{lem:ind2a} provides a novel
    and general $4$th derivative test for a
  function $f\in C^5(\R^{n+m})$ when (A1) holds. }
\end{mdframed}

To summarize the test:
\begin{enumerate}[(1)]
  \item Perform a linear change of variables
    so that the kernel
    of the Hessian of $f$ consists of the vectors supported only
    on their last
    $m$ coordinates, denoted by $y_i$, while  the first $n$ coordinates
    are denoted by $x_i$.
  \item Compute the Taylor expansion of $f$, and check for the
    existence of terms of the form $y_iy_jy_k$. If yes, then
    stop: 0 is a saddle. (Alternatively, Taylor-expand
      $g(t;\a^{(2)},\a^{(1)})$ in  \eqref{gt} directly, and stop if
    $a_3(\a^{(2)},\a^{(1)})\neq 0$ for some $(\a^{(2)},\a^{(1)})$.)
  \item Otherwise,
    apply Proposition \ref{prop:indic} to
    the family of trajectories from Equation
    \eqref{eq:k2famXY}. In particular, perform a Taylor-expansion
    of $g(t;\a^{(2)},\a^{(1)})$ defined in  \eqref{gt} and compute
    the fourth-order coefficients $a_4(\a^{(2)},\a^{(1)})$. Examine these
    over all parameter values such that $|\a^{(2)}|^2+|\a^{(1)}|^2=1$ to
    determine the nature of the critical point.
\end{enumerate}

In the general case, there may not be an efficient
algorithm to evaluate this test.

\vspace{.1in}

When this test is inconclusive, then there is a
parameter $(\a^{(2)},\a^{(1)})$ on the unit sphere in $\RR^{n+m}$
such that $a_4(\a^{(2)},\a^{(1)})=0$,
i.e.,
for the trajectory
\bna
\label{eq:ind2gen}
{\x}(t;\a^{(2)}) &:=& \a^{(2)} t^2\\
\label{eq:ind2gen2}
{\y}(t;\a^{(1)}) &:=& \a^{(1)} t
\ena
we have $f(\x(t;\a^{(2)}),\y(t;\a^{(1)}))=0$
through 4th order in $t$.

\begin{lemma}
  \label{lem:isactive}
  Suppose (A1) and (A2) hold.
  Suppose that the trajectory $(\x(t;\a^{(2)}),\y(t;\a^{(1)}))$ is
  such that $f(\x(t;\a^{(2)}),\y(t;\a^{(1)}))$ vanishes through fourth order
  in $t$.  Then $\a^{(1)}$ is non-zero.  In particular, if the 4th
  derivative test is inconclusive, then $(\x(t;\a^{(2)}),\y(t;\a^{(1)}))$
  is $1$-active.
\end{lemma}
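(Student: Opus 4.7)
The plan is to argue by contradiction: assume $\y' = 0$ and show that then $f(\x(t;\x''),\y(t;\y'))$ cannot vanish through fourth order in $t$ unless $\x'' = 0$ as well, contradicting that $(\x'',\y')$ lies on the unit sphere.

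First I would write out the Taylor expansion of $f$ at the origin. Since the origin is a critical point with $f(0) = 0$, the expansion begins at the quadratic term:
\[
f(\x,\y) = \tfrac{1}{2}(\x,\y)^T H (\x,\y) + R_3(\x,\y),
\]
where $H = H_f(0)$ and $R_3$ collects terms of total degree $\ge 3$. By assumption (A2), the kernel of $H$ is exactly the set of vectors of the form $(0,\y)$; combined with (A1), this means that the block $H_{\x\x}$ (the restriction of $H$ to the $\x$-subspace) is positive definite, and all entries of $H$ involving a $y$-coordinate vanish. So the quadratic form reduces to $\tfrac{1}{2}\x^T H_{\x\x}\x$.

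Next I would substitute the trajectory. If $\y' = 0$, then $(\x(t;\x''),\y(t;\y')) = (\x''t^2, 0)$, and the quadratic term contributes $\tfrac{1}{2}t^4 \x''^T H_{\x\x}\x''$. Every monomial in $R_3$ restricted to this trajectory is a product of coordinates of $(\x'' t^2, 0)$, each carrying a factor of $t^2$, so $R_3(\x'' t^2, 0) = O(t^6)$. Thus through fourth order in $t$,
\[
f(\x'' t^2, 0) = \tfrac{1}{2}\, t^4\, \x''^T H_{\x\x}\x'' + O(t^6).
\]
For the left side to be $4$-vanishing, the coefficient of $t^4$ must be zero, and positive definiteness of $H_{\x\x}$ forces $\x'' = 0$. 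But then $(\x'',\y') = 0$ is not on the unit sphere, a contradiction. Hence $\y' \ne 0$.

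For the ``in particular'' clause, observe that the derivative at $t = 0$ of the trajectory $(\x'' t^2, \y' t)$ is $(0, \y')$, which is nonzero precisely when $\y' \ne 0$. So whenever the $4$th derivative test is inconclusive and the witnessing trajectory has the form above with $a_4(\x'',\y') = 0$, that trajectory is $1$-active. There is no real obstacle here; the only thing to be careful about is using (A2) correctly to ensure $H$ has no mixed $xy$ blocks, which is what lets us conclude the quadratic part along the trajectory is exactly $\tfrac{1}{2}t^4 \x''^T H_{\x\x}\x''$ rather than something that could cancel against higher-order contributions.
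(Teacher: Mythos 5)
Your proposal is correct, and it rests on the same core mechanism as the paper's proof: by contradiction, if $\y'=0$ the trajectory lies entirely in the $\x$-coordinate subspace, where (A1) and (A2) force the Hessian block $H_{\x\x}$ to be positive definite, and the unit-sphere normalization then gives $\x''\neq 0$, which is incompatible with fourth-order vanishing. The difference is in packaging: the paper restricts to $\tilde f := f|_X$, reparameterizes via $\tau = t^2$ so the trajectory becomes linear in $\tau$, and invokes the already-established second derivative test (Proposition \ref{prop:indic} with the linear family) to get the contradiction, whereas you compute directly that the $t^4$ coefficient of $f(\x''t^2,0)$ equals $\tfrac{1}{2}\x''^{T}H_{\x\x}\x''>0$. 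Your route is more self-contained and arguably more transparent; the paper's reuses machinery it has already built. One small point of care: $f$ is only assumed $C^5$, not analytic, so you should not speak of ``monomials of $R_3$''; instead apply Taylor's theorem with remainder to write $f(\v)=\tfrac12\v^{T}H\v + R_3(\v)$ with $R_3(\v)=O(|\v|^{3})$ (which only needs $C^3$), so that along $\v=(\x''t^2,0)$ one gets $R_3=O(t^{6})=o(t^{4})$ — the conclusion you need is unchanged.
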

\begin{proof}
  Suppose, for a contradiction, that $\a^{(1)} = 0$. Let $X$ be the coordinate
  subspace spanned by the $\x$-coordinates.  Since $\a^{(1)} = 0$, the trajectory
  $(\x(t;\a^{(2)}),\y(t;\a^{(1)})) = (\x(t;\a^{(2)}),0)$ is in $X$.  The Hessian
  of $f$ is definite on $X$, so the second derivative test will
  certify that $\tilde{f} := f|_X$ has a strict local minimum at the origin.
  Now define a trajectory $\tilde{\x}(\tau(t)) := \tau(t)\a^{(2)}$
  with $\tau(t) := t^2$.
  The trajectory $\tilde{\x}$ is linear in $\tau$, and so indicative for the
  second derivative test.  By assumption,
  $\tilde{f}(\tilde{\x}(\tau(t)))$,
  vanishes through fourth order in $t$ and so
  $\tilde{f}(\tilde{\x}(\tau))$,
  vanishes through second order in $\tau$,
  making the second derivative test
  inconclusive for $\tilde{f}$.  The resulting contradiction
  implies that the assumption $\a^{(1)} = 0$ was false.
\end{proof}

\begin{figure}
    \includegraphics[width=0.33\linewidth]{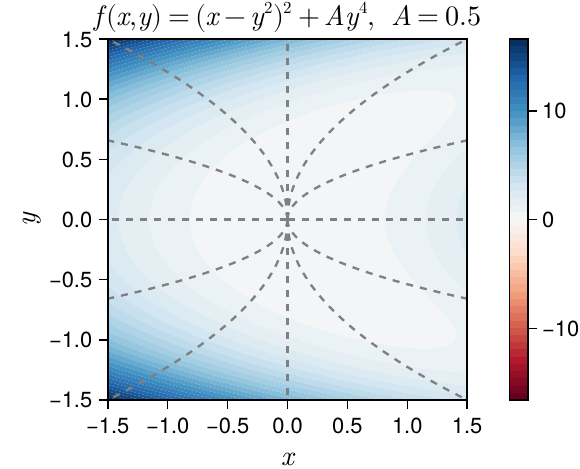}
    \includegraphics[width=0.33\linewidth]{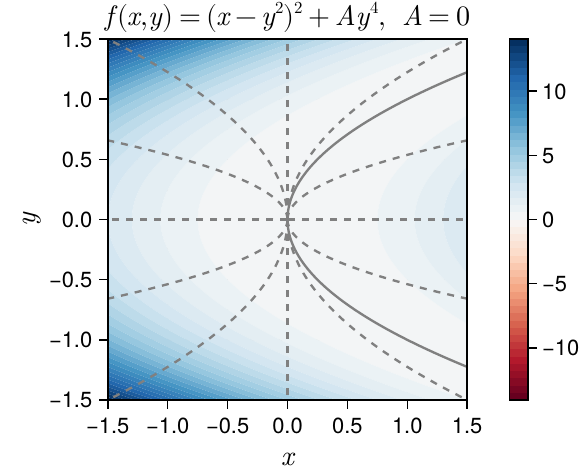}
    \includegraphics[width=0.33\linewidth]{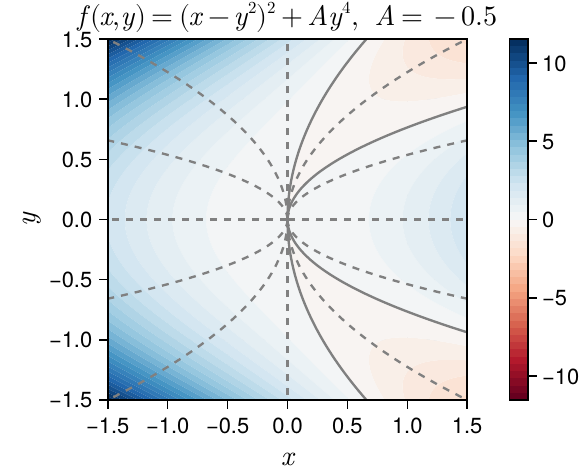}
    \caption{Three fourth degree polynomials. Zero sets are shown
    as solid curves. Indicative trajectories for the fourth derivative test are shown as dashed curves.
    Left: Strict local minimum at the origin. 
    Along each
    indicative trajectory, the energy has a positive, leading
    fourth degree coefficient, certifying the order of growth.
    Middle: Non-strict
    local minimum at the origin. 
        Along each
    indicative trajectory, the energy has a non-negative, leading
    fourth degree coefficient. Because of the zero coefficient along
    the solid parabola, the test is inconclusive.
    Right: Saddle at the origin.     Along the
    indicative trajectories, the energy has a mix of positive and negative, leading
    fourth degree coefficient, certifying saddle behavior. 
    }\label{fig:ex6.9}
\end{figure}

\begin{example}
Here is an example to illustrate the fourth-order critical point test.
Consider the fourth-degree polynomial  with Hessian
\[
  f(x,y) = (x-y^2)^2 + Ay^4, \qquad\quad
  \frac{1}{2}Hf\Big|_{(0,0)} =
  \begin{pmatrix} 1 & 0 \\ 0 & 0
  \end{pmatrix},
\]
where $A\in \R$ is some constant.
Then $f$ has a critical point at the origin, and the variable $y$
already spans the degenerate direction of the Hessian. Evaluating
$f$ along the indicative trajectories $x=a^{(2)}t^2, y=a^{(1)} t$ gives
\[
  f(a^{(2)}t^2,a^{(1)}t) = t^4\Big((a^{(2)}-{a^{(1)}}^2)^2 + A{a^{(1)}}^4\Big),
\]
so $a_4(a^{(2)},a^{(1)}) = (a^{(2)}-{a^{(1)}}^2)^2 + A{a^{(1)}}^4$.

When $A>0$ (Figure  \ref{fig:ex6.9}(left)), then $a_4(a^{(2)},a^{(1)})>0$ for all $|(a^{(2)},a^{(1)})|=1$, so
case (a) of Proposition \ref{prop:indic} tells us the origin is a
strict local minimum.

When $A=0$ (Figure  \ref{fig:ex6.9}(middle)), then $a_4(a^{(2)},a^{(1)})\geq 0$, with $a_4(a^{(2)},a^{(1)})=0$ at
two points on the unit sphere, $(a^{(2)},a^{(1)}) =
(\frac{\sqrt{5}-{1}}{2},\pm\sqrt{\frac{\sqrt{5}-{1}}{2}})$.
Case (d) of Proposition \ref{prop:indic} applies and the fourth
derivative test is inconclusive. (One can see that example
  \eqref{eq:rud} has the same expression for $a_4$ with $A=0$, so
it behaves identically to this case.)

When $A<0$ (Figure  \ref{fig:ex6.9}(right)), then $a_4(a^{(2)},a^{(1)})$ is sometimes positive (e.g. when
$a^{(1)}=0$) and sometimes negative (e.g. on the parabola
$a^{(2)}={a^{(1)}}^2$), so case (c) of Proposition \ref{prop:indic} tells
us the origin is a saddle point.

In light of Remark \ref{remark:taylor4}, when $A\neq 0$, we may
add  any polynomial of the form
$c_1x^3+c_2x^2y+c_3x^4+c_4x^3y+c_5x^2y^2+c_6xy^3$  to $f$,
without changing the nature of the critical point.
\end{example}

\subsection{What about a general 6th derivative test}
\label{sec:whatAbout}
Interestingly, if the 4th derivative test is inconclusive, it is
not clear how
to derive a  generally applicable 6th derivative test.

Suppose the 4th derivative test of Section~\ref{Sec:4test} is
inconclusive.
We can check if
there are any terms of the form
$y_i y_j y_k y_l y_m$ in the Taylor expansion of $f$.
If there are, then $f$ must be a
saddle.
But if there are no such terms, what then?

For sake of discussion, let us suppose
that there was a single trajectory
of the form of Equation (\ref{eq:ind2gen}),
up to a linear reparameterization in $t$,
governed by $(\a^{(2)},\a^{(1)})$.
One idea is to look at the family of
test trajectories
\ba
\x(t;\alpha,\a^{(3)}) &:=&  \alpha^2  \a^{(2)} t^2 + \a^{(3)} t^3\\
\y(t,\alpha) &:=&  \alpha \a^{(1)} t
\ea
with $\a^{(1)} \in \RR^m$ and $\a^{(2)} \in \RR^n$ fixed and with parameters $\alpha \in \RR$ and $\a^{(3)} \in \RR^n$
constrained with $|\alpha|^2+|\a^{(3)}|^2=1$.
But
when we project this family onto 
the space spanned by the $y$ coordinates,
we see that this projection will lie in
the line spanned by $\a^{(1)}$.
Therefore, the family will not cover a neighborhood of the
origin, so it is not an indicative family.

Interestingly, when the Hessian has nullity of
$1$, and there is only a single $y$ coordinate, then we \emph{will} get
neighborhood coverage from this family. Indeed, this kind of
family is at the heart of the
higher
order rigidity tests in Section~\ref{sec:higher}.

One might try to use a bigger family to cover
a neighborhood of the origin, but this can run
into other problems.
Consider the family
\ba
\x(t;\alpha,\a^{(3)}) &:=&   \alpha^2 \a^{(2)} t^2 + \a^{(3)} t^3\\
\y(t;\b^{(1)}) &:=&  \b^{(1)}  t
\ea
where we allow $\b^{(1)} \in \RR^m$ to be a free parameter along with $\a^{(3)}$
and $\alpha$,
constrained so that $|\alpha|^2+|\a^{(3)}|^2+|\b^{(1)}|^2=1$.
Then for general choices of $(\alpha,\b^{(1)})$,
we may see non-zero
terms at 4th order in $t$ when looking at
the $6$th order Taylor expansion of
$g(t):=f(\x(t),\y(t))$. So this is not an indicative family.

Finally, when choosing a family,
one might end up with an indicative
family, but one which always
has must some parameter choices where
$g(t)$ vanishes at order $6$.
Such a family
will never be useful in establishing a strict local minimum.

For example, assume that the nullity of
the Hessian is greater than $1$, and
let us look at the family
\ba
\x(t;\alpha,\a^{(3)}) &:=&   \alpha^2 \a^{(2)} t^2 + \a^{(3)} t^3\\
\y(t;\alpha,\b^{(2)}) &:=&  \alpha \a^{(1)} t  + \b^{(2)} t^2
\ea
where we allow $\b^{(2)}\in \RR^m$ to be a free parameter along with $\a^{(3)}$
and $\alpha$.
(If we want, we can also restrict $\b^{(2)}$ to
be orthogonal to $\a^{(1)}$.) Again we restrict
our parameters to have unit norm.
Then by setting $\alpha=0$ and $\a^{(3)}=0$ we will have the trajectory
\ba
\x(t) &:=&   0 \\
\y(t) &:=&  \b^{(2)} t^2.
\ea
Under our assumptions, there are no terms of type $y_i y_j$ or
$y_i y_j y_k$, and thus $g(t)$ will be zero at 6th order.
Morally speaking this corresponds
in rigidity, to the idea that any $(1,1)$ flex always gives rise
$(2,3)$ flex (see
Remark~\ref{rem:irrFlex}).

\subsection{Cushing's higher order derivative test}
While we do not know of a generally applicable derivative test
for orders beyond four, an important special case is completely understood.
When the nullity of the Hessian of $f$ is one,
Cushing describes a
$2k$ derivative test
for any $k$~\cite{cushing}. His method and proof
involves the use of a
clever sequence of
changes of variables that we will not describe here.
In Section \ref{sec:higher}, we describe a $2k$
derivative test that works specifically for
energies $E$ that are stiff bar.  Our test
is related to Cushing's in that our underlying family of
test trajectories at each level is actually
the same as his, under a
change of variables.  There are,
however, some key differences:
our test is expressed without a
change of variables but
rather in the original configuration
variables;
because our application does now allow for saddles,  we
don't need to consider odd terms in the Taylor expansion;
and importantly, for our special case of a stiff-bar energy,
the proof that the test trajectories are indicative is
simpler and more direct.

\section{Rigidity certificates from energy analysis}
Now we are ready to prove our main results.
In particular, we will use the
derivative tests from the previous section 
to certify the rigidity of a framework and to compute its rigidity order.

\subsection{First-order rigidity}
We first prove Theorem \ref{thm:main0}, which is very
similar to the well-known fact (see, e.g., \cite{pss})
that, if the Hessian of a stiff-bar energy at $\p$ is positive
definite, then
the framework $(G,\p)$ is rigid.

\begin{lemma}\label{lem: stiff-bar Hessian K}
  Let $(G,\p)$ be a framework and $E$ an energy that is stiff bar at $\p$.
  Any configuration $\p'$ is in the kernel of the Hessian $H_E(\p)$
  of $E$ at $\p$, if and only if $\p(t) = \p + \p't$ is a
  $(1,1)$-flex of $(G,\p)$; i.e., $\p'\in K$.
\end{lemma}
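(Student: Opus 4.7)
The plan is to reduce the question to a direct calculation of the second directional derivative of $E$ along the line $\p(t) = \p + \p' t$, then exploit the stiff-bar assumption ($E_{ij}'(d_{ij}) = 0$ and $E_{ij}''(d_{ij}) > 0$) to show that this directional derivative is zero precisely when every edge length has vanishing first derivative.

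First I would invoke Theorem \ref{thm: E min and rigidity} (or its proof) to note that since $E$ is stiff bar, it is in particular bar-like, so $\p$ is a local minimum of $E$ and hence $H_E(\p)$ is PSD. For a PSD matrix, $\p' \in \ker H_E(\p)$ if and only if $\p'^\top H_E(\p) \p' = 0$, which in turn equals $\frac{d^2}{dt^2} E(\p + \p' t)\big|_{t=0}$. So it suffices to analyze this second derivative.

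Next I would compute this second derivative using the chain rule (a one-line application of Faà di Bruno's formula with $f = E_{ij}$ and $g(t) = l_{ij}(\p + \p' t)$):
\[
\frac{d^2}{dt^2}E(\p+\p' t)\Big|_{t=0} \;=\; \sum_{ij \in G} \Bigl[E_{ij}''(d_{ij})\,(l_{ij}')^2 \;+\; E_{ij}'(d_{ij})\,l_{ij}''\Bigr],
\]
where $l_{ij}'$ and $l_{ij}''$ denote the first two $t$-derivatives of $l_{ij}(\p + \p' t)$ at $t=0$. Because $E$ is stiff bar, $E_{ij}'(d_{ij}) = 0$ and $E_{ij}''(d_{ij}) > 0$, so this reduces to a sum of non-negative terms $\sum_{ij} E_{ij}''(d_{ij})(l_{ij}')^2$, which vanishes if and only if $l_{ij}' = 0$ for every edge $ij$.

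Finally I would translate $l_{ij}' = 0$ for all edges into the flex condition: this says that $\l(\p(t))$ is $1$-vanishing, equivalently (by Remark \ref{rem:lorm}) that $\m(\p(t))$ is $1$-vanishing, which is exactly the statement that $\p + \p' t$ is a $(1,1)$-flex (allowing the trivial case $\p' = 0$, consistent with the definition of $K$). Combining, $\p' \in \ker H_E(\p)$ iff every $l_{ij}'$ vanishes iff $\p + \p' t$ is a $(1,1)$-flex iff $\p' \in K$. There is no real obstacle here; the only subtlety to flag is making explicit the standard fact that for a PSD Hessian the kernel equals the zero set of the associated quadratic form, which is what lets us pass from the scalar identity $\p'^\top H_E(\p)\p' = 0$ to $H_E(\p)\p' = 0$.
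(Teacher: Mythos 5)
Your proposal is correct and follows essentially the same route as the paper: both identify the kernel of the PSD Hessian with the vanishing of the quadratic form $\p'^{\top}H_E(\p)\p'$, i.e.\ the second $t$-derivative of $E$ along the line, and then use the stiff-bar conditions to reduce this to $\sum_{ij}E_{ij}''(d_{ij})(l_{ij}')^2=0$, hence $l_{ij}'=0$ for every edge. The only difference is cosmetic: the paper cites Theorem~\ref{thm:coin} (whose $k=1$ base case is exactly your chain-rule computation) rather than carrying out that computation inline, and you rightly flag the same $\p'=0$ convention that the paper's definition of $K$ absorbs.
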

\begin{proof}
  Let $\p(t) = \p + \p't + \cdots$ be a trajectory.  The coefficient of
  $t^2$ in the Taylor expansion of $E(\p(t))$ in $t$ at $t=0$
  is $\frac{1}{2}(\p')^T H_E(\p)\p'$, which vanishes if and only
  if $\p'$ is in the kernel of the PSD matrix $H_E(\p)$. Equivalently,
  $\p(t)$ is a $(1,2)$ $E$-flex of $(G,\p)$ if and only if $\p'$
  is in the kernel
  of $H_E(\p)$.  By Theorem \ref{thm:coin}, $\p(t)$ is a $(1,2)$
  $E$-flex of $(G,p)$
  if and only if it is a $(1,1)$-flex of $(G,\p)$.
\end{proof}
Now we can prove Theorem \ref{thm:main0} using critical
point analysis of an energy.
\begin{proof}[Proof of Theorem \ref{thm:main0}]
  To study the energy near $\p$ using indicative trajectories,
  let us define a function
  \[
    f(\delta\p) = E(\p + \delta \p) - E(\p).
  \]
  By Theorem \ref{thm: E min and rigidity}, the function
  $f$ has a local minimum at the origin with $f(0)=0$.  Moreover, the
  origin is a strict local minimum for $f$ if and only
  if $\p$ is a strict local minimum for $E$, and this
  is equivalent to $(G,\p)$ being rigid.

  The trajectories
  $\delta\p(t) = \p't$, with $|\p'| = 1$
  are indicative at order $2$ for
  $f$ at the origin.  If $(G,\p)$ is {\em not}
  rigid, the local minium of $f$ at the origin
  is not strict (Theorem~\ref{thm: E min and rigidity}),
  and so the second derivative test will be
  inconclusive (Proposition~\ref{prop:indic}).
  Hence, there is a $\p' \neq 0$ so that $f(\p' t)$ vanishes
  through second-order in $t$; i.e., $\p'$ is in the
  kernel of $H_f(0)$.
  Since $H_f(0) = H_E(\p)$,
  Lemma \ref{lem: stiff-bar Hessian K} implies
  that the trajectory $\p(t) = \p + \p't$ is
  a $(1,1)$-flex of $(G,\p)$.  This is the contrapositive
  of the first part.

  For the second part, the trajectory $\delta \p(0) = \p' t$
  is a $(1,2)$ $f$-flex, from which we conclude that
  $\p(t) = \p + \p' t$ is a $(1,2)$ $E$-flex of $(G,\p)$.
  By Lemma \ref{lem:eflex}, $E$ grows sometimes-$4$-slowly at $\p$.
  It follows that the rigidity order of $(G,\p)$, if finite,
  is at least $2$.
\end{proof}

\subsection{Second-order rigidity}
Now we prove Theorem \ref{thm:main}.  To do this
we apply our 4th derivative test, with an
approach that parallels the argument immediately above.
Let
$(G,\p)$ be a framework and let $E$ be an
energy that is stiff-bar at $\p$.  Let us first
define a function
\begin{equation}\label{eq:f2}
  f(\delta \p) = E(\p + \delta \p) - E(\p),
\end{equation}
that moves the critical point of $E$ at $\p$ to
a critical point of $f$ at the origin (to better match the setup
for the 4th derivative test).
We now define a family of trajectories 
$\{\delta\p(t;\a^{(2)},\a^{(1)})\}$ 
$\{\delta\p(t;\a^{(2)},\a^{(1)})\}$ 
by
\begin{equation}\label{eq:del2}
  \delta \p(t;\a^{(2)},\a^{(1)}) := \a^{(1)} t + \a^{(2)} t^2,
\end{equation}
where the parameters $\a^{(1)}$ and $\a^{(2)}$ satisfy:
\begin{equation}\label{eq:del2params}
  \a^{(1)}\in K, \quad \a^{(2)}\in \overline{K},
  \quad \text{and } |\a^{(1)}|^2 + |\a^{(2)}|^2 = 1.
\end{equation}
Immediately from the definitions:
\begin{lemma}\label{lem: e-flex / f-flex}
  Let $(G,\p)$ be a framework and $E$ an energy that is stiff-bar at
  $\p$.  Then a trajectory $\delta\p(t;\a^{(2)},\a^{(1)})$ in \eqref{eq:del2},\eqref{eq:del2params} is a
  $(j,k)$ $f$-flex at the origin if and only if
  $\p(t;\a^{(2)},\a^{(1)}) = \p + \delta\p(t;\a^{(2)},\a^{(1)})$ is a $(j,k)$ $E$-flex
  at $\p$.  In particular, the tight growth order of $E$ at $\p$ is
  equal to the tight
  growth order of $f$ at the origin.
\end{lemma}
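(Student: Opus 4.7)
The plan is to observe that $f$ is literally $E$ shifted in both domain and codomain, so the lemma reduces to the fact that adding a constant to a function, and translating the input, affect neither the vanishing orders of its derivatives along a trajectory nor the growth comparisons defining tight order. I expect no real obstacle: the argument is essentially definition-chasing.

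First I would set $\p(t) := \p(t;\p'_0,\p''_0)$ and $\delta\p(t) := \delta\p(t;\p'_0,\p''_0)$ for brevity, and use
\[
   f(\delta\p(t)) \;=\; E(\p + \delta\p(t)) - E(\p) \;=\; E(\p(t)) - E(\p).
\]
Thus $f(\delta\p(t))$ and $E(\p(t))$ differ by the constant $E(\p)$, so for every $i\ge 1$ their $i$-th derivatives at $t=0$ coincide. In particular, the first $k$ derivatives of one vanish at $t=0$ iff those of the other do, so $f(\delta\p(t))$ is $k$-vanishing iff $E(\p(t))$ is $k$-vanishing.

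Next, since $\delta\p(t) = \p(t) - \p$ differs from $\p(t)$ by a constant vector, their $i$-th derivatives at $t=0$ agree for all $i\ge 1$. Hence $\delta\p(t)$ is $j$-active at the origin iff $\p(t)$ is $j$-active at $\p$. Combining with the previous paragraph and appealing to Definition~\ref{def: f-flex} gives the claimed equivalence of $(j,k)$ $f$-flexes of $f$ at the origin with $(j,k)$ $E$-flexes of $E$ at $\p$.

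For the tight growth order, the translation $\delta\q \mapsto \q = \p + \delta\q$ is a bijection between neighborhoods of the origin in $\ell$-pinned configuration space and neighborhoods of $\p$, satisfying $|\delta\q| = |\q - \p|$ and $f(\delta\q) - f(0) = E(\q) - E(\p)$ (using $f(0) = 0$). The inequalities defining always-$s$-quick and sometimes-$s$-slow growth therefore transfer verbatim between $f$ at the origin and $E$ at $\p$; applied to any trajectory $\p(t) = \p + \delta\p(t)$, the bounds match and so do the witnessing constants. Consequently $E$ grows $s$-tightly at $\p$ iff $f$ grows $s$-tightly at the origin, which proves the final claim.
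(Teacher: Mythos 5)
Your proposal is correct and matches the paper, which simply states this lemma as immediate from the definitions; your argument just spells out that translating the domain by $\p$ and subtracting the constant $E(\p)$ changes neither derivative orders along trajectories nor the growth inequalities.
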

Now we check that, under appropriate conditions, the trajectories
$\delta\p$ are indicative at order $4$.
\begin{lemma}\label{lem: delta p indicative}
  Let $(G,\p)$ be a framework and $E$ an energy that is stiff-bar
  at $\p$.  If $(G,\p)$ has a $(1,2)$ $E$-flex, then the family
  of trajectories $\{\delta\p(t;\a^{(2)},\a^{(1)})\}$ in \eqref{eq:del2},\eqref{eq:del2params} is
  indicative at order $4$ for the function $f$ in \eqref{eq:f2},
  at the origin.
\end{lemma}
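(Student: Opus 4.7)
The plan is to verify directly the four properties (I1)--(I4) of an indicative family from Definition~\ref{def:indic}, applied to the critical point of $f$ at the origin. I would choose $\overline{K}$ to be the orthogonal complement of $K$ in the pinned configuration space; with this choice, the parameter set $S := \{(\p'_0,\p''_0)\in K\times \overline{K} : |\p'_0|^2+|\p''_0|^2=1\}$ is a unit sphere in $K\oplus \overline{K}$. Then (I1) (compactness of $S$) and (I2) (joint smoothness in $(t,\p'_0,\p''_0)$ of the polynomial map $(t,\p'_0,\p''_0)\mapsto \p'_0 t+\p''_0 t^2$) are immediate.

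For (I4), I would work one edge at a time using Lemma~\ref{lem:vanish2}. Because $\p'_0\in K$, it is (the coefficient of) a $(1,1)$-flex or zero, so $\nabla l_{ij}(\p)\cdot \p'_0=0$ for every edge $ij\in G$. Consequently, along $\delta\p(t;\p'_0,\p''_0)$,
\[
\left.\frac{d}{dt} l_{ij}(\p+\delta\p(t))\right|_{t=0} = \nabla l_{ij}(\p)\cdot \p'_0 = 0,
\]
so each $g_{ij}(t):=l_{ij}(\p+\delta\p(t))$ is $1$-vanishing in $t$. Since $E$ is stiff-bar at $\p$, $E'_{ij}(d_{ij})=0$ and $E''_{ij}(d_{ij})>0$, so Lemma~\ref{lem:vanish2}(i) with $k=1$ shows that $E_{ij}(g_{ij}(t))$ is $3$-vanishing. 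Summing over edges and subtracting $E(\p)$, I would conclude that $f(\delta\p(t;\p'_0,\p''_0))$ is $3$-vanishing, with zero constant term because $f(0)=0$. This is exactly (I4).

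For (I3), the initial-coverage condition, I would mimic the ellipsoid construction from the proof of Lemma~\ref{lem:ind2a}. For each $s>0$ I would set
\[
Q_s := \{\x_1+\x_2 \in K\oplus\overline{K} : s^2|\x_1|^2+|\x_2|^2=s^4\};
\]
a direct check shows that the scaling map $(\p'_0,\p''_0)\mapsto (s\p'_0,s^2\p''_0)$ bijects $S$ onto $Q_s$. Given $0<\delta\le \eps$, a small enough neighborhood $U$ of the origin lies strictly inside $Q_\delta$, while each nonzero $\x\in U$ lies strictly outside $Q_r$ for all sufficiently small $r>0$. The intermediate value theorem applied to $h(t):=t^2|\x_1|^2+|\x_2|^2-t^4$ on $[0,\delta]$ would then produce $t\in[0,\delta]$ with $\x\in Q_t$; the preimage of $\x$ under the corresponding scaling gives $(\p'_0,\p''_0)\in S$ with $\x=\p'_0 t+\p''_0 t^2 = \delta\p(t;\p'_0,\p''_0)$.

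The main obstacle is (I3); the other three properties are essentially immediate, whereas (I3) requires pairing the $t$- and $t^2$-scalings of the two trajectory components with a suitable ellipsoid. However, the construction is a direct adaptation of the one already carried out in Lemma~\ref{lem:ind2a}, so no truly new ideas are needed. Note that the hypothesis that $(G,\p)$ has a $(1,2)$ $E$-flex is, by Theorem~\ref{thm:coin} and Lemma~\ref{lem: stiff-bar Hessian K}, equivalent to $K\neq\{0\}$; this hypothesis is what makes the fourth-derivative analysis necessary in the first place, since when $K=\{0\}$ the Hessian of $f$ at the origin is positive definite and the second-derivative test already suffices.
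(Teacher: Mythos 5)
Your proof is correct, and it takes a mildly different route from the paper's. The paper proves this lemma by \emph{reduction} to Lemma~\ref{lem:ind2a}: it uses the $(1,2)$ $E$-flex hypothesis to establish that the second-derivative test is inconclusive (assumption (A1)), passes to an orthonormal basis adapted to $K\oplus\overline{K}$ so that (A2) holds, kills the cubic coefficient $a_3$ by observing that $f\ge 0$ near the origin (Theorem~\ref{thm: E min and rigidity}, or alternatively Theorem~\ref{thm:coin}), and then identifies the abstract indicative family of Lemma~\ref{lem:ind2a} with the family $\{\delta\p(t;\p'_0,\p''_0)\}$ via the change of basis. You instead verify (I1)--(I4) of Definition~\ref{def:indic} directly: your (I4) argument goes edge by edge, using $\p'_0\in K$ to get that each $l_{ij}$ is $1$-vanishing along the trajectory and then Lemma~\ref{lem:vanish2}(i) with the stiff-bar conditions $E_{ij}'(d_{ij})=0$, $E_{ij}''(d_{ij})>0$ to conclude $3$-vanishing of $f$ along every trajectory; this makes the third-order vanishing automatic, with no appeal to nonnegativity of $f$ or to the ``no $y_iy_jy_k$ terms'' dichotomy. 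Your (I3) argument is the same ellipsoid-plus-intermediate-value construction as in the proof of Lemma~\ref{lem:ind2a}, just transported to $K\oplus\overline{K}$ coordinates rather than performed after an explicit change of variables (and your choice $\overline{K}=K^\perp$ matches what the paper's proof implicitly does when it picks an orthonormal basis splitting $K$ and $\overline{K}$, and is what reconciles $|\p'_0+\p''_0|=1$ with your sphere condition). What each approach buys: the paper's reduction showcases the lemma as a direct instance of the general fourth-derivative-test machinery, while yours is more self-contained and exploits the energy structure; as a by-product your argument does not actually need the $(1,2)$ $E$-flex hypothesis (which the paper only uses to secure (A1)), and your closing remark that this hypothesis is equivalent to $K\neq\{0\}$ is accurate. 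The only trivial omissions are that the origin itself is covered at $t=0$ and that each trajectory is non-constant because $(\p'_0,\p''_0)$ lies on the unit sphere; neither affects correctness.
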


\begin{proof}
  The idea is to apply Lemma \ref{lem:ind2a} to $f$ after a change of
  variables so that the $\y$-coordinates span $K$ and the $\x$-coordinates
  span $\overline{K}$.  The hypothesis that $(G,\p)$ has a $(1,2)$
  $E$-flex corresponds to the hypothesis of Lemma \ref{lem:ind2a}
  that says the 2nd derivative test is inconclusive at the origin.  Now
  we give the formal details.

  Let $\p(t) = \p + \a^{(1)} t +\cdots $ be a $(1,2)$ $E$-flex of
  $(G,\p)$, which exists by hypothesis.
  By Lemma \ref{lem: e-flex / f-flex},
  the trajectory $\delta \p(t) = \a^{(1)} t+\cdots$ is a $(1,2)$ $f$-flex at the
  origin.  Because $f(\delta\p(t))$ vanishes through second-order in
  $t$, the 2nd derivative test must be inconclusive for $f$ at the
  origin.  Moreover, the Hessian of $f$ at the origin is equal
  to the Hessian of $E$ at $\p$, which implies that its kernel is
  $K$.  This establishes the assumption (A1) used in
  Lemma \ref{lem:ind2a}.

  The next hypothesis of Lemma \ref{lem:ind2a}
  is that, for all trajectories in the family
  $\{\delta\p(t;\a^{(2)},\a^{(1)})\}$, in the Taylor expansion
  \[
    f(\delta\p(t;\a^{(2)},\a^{(1)})) = a_3t^3 + a_4t^4 + \cdots,
  \]
  we have $a_3 = 0$.  This follows from the fact that $f$ is non-negative
  in a neighborhood of the origin, which, in turn follows from
  Theorem \ref{thm: E min and rigidity}, since $E$ is stiff-bar at $\p$ (see Remark~\ref{rem:minNoy3}).
  (This also follows directly from Theorem \ref{thm:coin}, since
    $\p(t)$ is a (1,2) E-flex, so the theorem implies it is also a
  (1,3) E-flex.)
  Finally, in pinned configuration space, we pick an orthonormal basis
   $\{\v_1, \ldots, \v_n\}$
for $\overline{K}$ and
an orthonormal basis
 $\{\u_1, \ldots, \u_m\}$
for ${K}$.
 We now define a function
  $\tilde{f}(\x,\y)$ by
  \[
    \tilde{f}(\x,\y) =
    f(x_1\v_1 + \cdots + x_n\v_n + y_1\u_1 + \cdots + y_m\u_m).
  \]
  This function $\tilde{f}$ satisfies the assumption (A2) in Section \ref{Sec:4test}.  Because it
  is related to $f$ by a linear change of coordinates, $\tilde{f}$
  also satisfies (A1). 
  Since $f$ is not a saddle, neither is $\tilde{f}$
  and so  the third order vanishing condition on
  its Taylor expansion must be satisfied.

  We may now apply Lemma \ref{lem:ind2a} to $\tilde{f}$.  The
  indicative trajectories $\{(\x(t;\b^{(2)}),\y(t;\b^{(1)})\}$ in \eqref{eq:k2famXY}-\eqref{eq:k2famXYb} for
  $\tilde{f}$, (with $\b^{(2)} \in \RR^n$ and $\b^{(1)} \in \RR^m$)
  correspond to the trajectories
  $\delta\p(t;\a^{(2)},\a^{(1)})$ 
  (with $\a^{(2)} \in \overline{K}$ and $\a^{(1)} \in K$)
  as follows.  Let
  ${\bf V}$ be a matrix with columns $\v_i$ and let
  ${\bf U}$ be a matrix with columns $\u_j$.  We then
  have $\a^{(1)}:= {\bf U}\b^{(1)}\in K$ and $\a^{(2)}:={\bf V}\b^{(2)}\in \overline{K}$.
  Since the matrices $\bf U$, $\bf V$ are orthonormal,
  the condition $|\b^{(1)}|^2 + |\b^{(2)}|^2 = 1$ implies that
  $|\a^{(1)}|^2 + |\a^{(2)}|^2 = 1$; hence, give an indicative trajectory
  $(\x(t;\b^{(2)}),\y(t;\b^{(1)})$, we have produced one of the
  trajectories $\delta\p(t;\a^{(2)},\a^{(1)})$.
  This correspondence is a bijection.  We conclude that
  the family $\{\delta\p(t;\a^{(2)},\a^{(1)})\}$ is indicative at
  order $4$ for $f$.
\end{proof}
We now have all the results we need to prove Theorem \ref{thm:main}.
\begin{proof}[Proof of Theorem \ref{thm:main}]
  Let $(G,\p)$ be as in the statement and let $E$ be an energy that
  is stiff-bar at $\p$.  Since $(G,\p)$ has a $(1,1)$-flex, by Theorem
  \ref{thm:coin}, it has a $(1,2)$ $E$-flex.
  Thus, from Lemma~\ref{lem:eflex},
  the rigidity order of $(G,\p)$ must be at least $2$.
  Additionally, we may apply
  Lemma \ref{lem: delta p indicative} to conclude that the
  family of trajectories $\{\delta\p(t;\a^{(2)}, \a^{(1)})\}$ in \eqref{eq:del2}, \eqref{eq:del2params} is
  indicative at order  $4$  for the function $f$ at the origin.

  Suppose that the rigidity order of $(G,\p)$ is greater than $2$.
  This implies that $f$ does not grow always-$4$-quickly at the
  origin, so the
  4th derivative test must report ``inconclusive''.
  Therefore there is an indicative trajectory
  $\delta\p(t;\a^{(2)},\a^{(1)})$ such that
  $f(\delta\p(t;\a^{(2)},\a^{(1)}))$ vanishes through
  4th order in $t$; i.e., $\delta\p(t;\a^{(2)},\a^{(1)})$ is a
  $(j,4)$ $f$-flex for some $j$.  By Lemma \ref{lem:isactive},
  we must have $j=1$, showing that $\delta\p(t;\a^{(2)},\a^{(1)})$
  is a $(1,4)$ $f$-flex at the origin.

  The definition of $f$ then implies that
  $\p(t) = \p + \delta\p(t;\a^{(2)}, \a^{(1)})$
  is a $(1,4)$ $E$-flex of $(G,p)$.
  Finally, by Theorem \ref{thm:coin}, $\p(t)$ is a $(1,2)$-flex of $(G,\p)$.
  This is the contrapositive of the first part.

  For the second part, if $(G,\p)$ has a $(1,2)$-flex, then Theorem
  \ref{thm:coin} implies that it has a $(1,4)$ $E$-flex.  Lemma
  \ref{lem:eflex} then implies that $E$ grows sometimes-$6$-slowly
  at $\p$.  Hence, if $(G,\p)$ is rigid, its rigidity order is at least
  $3$.
\end{proof}

\subsection{Higher order rigidity when $\dim(K)=1$}
\label{sec:higher}
Now we give a new proof of Theorem~\ref{thm:alex} by
analyzing the critical points of a stiff bar energy.
We recall that the key hypothesis of Theorem
\ref{thm:alex} is that, for the framework $(G,\p)$
under consideration, we have $\dim(K) = 1$.
For the rest of this section, we will assume that
$\dim(K)=1$.  We now describe a sequence of $2k$
derivative tests that apply to this case.

Let $E$ be a stiff bar energy.
Suppose that a framework $(G,\p)$ has a $(1,2)$
flex $\p(t) := \p + \a^{(1)}t + \a^{(2)}t^2$,
which we can choose such that
$|\a^{(1)}|=1$  (using a reparameterization in $t$),
and
$\a^{(2)} \in \overline{K}$
(using Lemma~\ref{lem:kcomp}). For this setup, $K$ is the span of $\a^{(1)}$.
Let
$y \in \RR$  and 
$\a^{(3)} \in \overline{K}$
be parameters.

Again, we define
\ba
f(\delta \p) := E(\p + \delta \p)-E(\p).
\ea
Let us define the family
of test trajectories
\bna
\label{eq:k3fam}
\delta \p(t;\alpha,\a^{(3)})
:= \alpha \a^{(1)}t + \alpha^2 \a^{(2)}t^2 +\a^{(3)} t^3
\ena
in which $\a^{(1)}$ and $\a^{(2)}$ are fixed, and the parameters $\alpha$
and $\a^{(3)}$ are on  the ``unit sphere''  i.e.:
\ba
\alpha^2 +
|\a^{(3)}|^2  =1.
\ea

\begin{lemma}\label{lem42}
  Assume that $\dim(K)=1$ and that
  $\p(t) := \p + \a^{(1)} t + \a^{(2)} t^2$ is a $(1,2)$ flex.
  Then the family
  described by Equation (\ref{eq:k3fam}), parameterized
  by $(\alpha,\a^{(3)})$, is indicative
  at order $6$ for $f$.
\end{lemma}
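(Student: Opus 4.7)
My plan is to verify the four axioms (I1)--(I4) of Definition \ref{def:indic} at order $2k=6$ for the family in Equation \eqref{eq:k3fam}. Properties (I1) and (I2) are immediate: the parameter space is the unit sphere in $\RR \times \overline{K}$, which is compact; and each trajectory is a polynomial in $t$, $y_0$, and $\p_0'''$, hence $C^\infty$ in all of them. The trajectories are defined for all $t$, so I may take $\eps = 1$ (or any positive value).

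Next I would verify (I4), namely that $f(\delta\p(t;y_0,\p_0'''))$ is $5$-vanishing for every parameter value. I split into two cases. If $y_0 \neq 0$, then $\delta\p(t;y_0,\p_0''')$ is $1$-active with leading coefficient $y_0 \p'$, and direct computation of the first two derivatives of $\m(\p + \delta\p(t;y_0,\p_0'''))$ at $t=0$ produces the quantities $2y_0(\p_v-\p_w)\cdot(\p'_v-\p'_w)$ and $2y_0^2\bigl[|\p'_v-\p'_w|^2 + 2(\p_v-\p_w)\cdot(\p''_v-\p''_w)\bigr]$, both of which vanish because $\p'\in K$ and $\p+\p't+\p''t^2$ is a $(1,2)$-flex. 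If $y_0 = 0$, then $\delta\p(t;0,\p_0''') = \p_0''' t^3$ is $3$-active (note $|\p_0'''|=1$), so the first two derivatives of $\m$ vanish automatically. In either case the trajectory is a $(j,2)$-flex of $(G,\p)$ for some $j\in\{1,3\}$ (this is the technical use of a $(j,k)$-flex with $k<j$ mentioned in Section~2.1). By Theorem \ref{thm:coin}, it is then a $(j,4)$ $E$-flex, so $f(\delta\p(t;y_0,\p_0'''))$ is $4$-vanishing; and because $f$ has a local minimum at the origin (Theorem \ref{thm: E min and rigidity}), a nonzero fifth derivative would force a sign change, so the function is in fact $5$-vanishing with zero constant term.

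The main obstacle is (I3), the initial covering property, where I would adapt the scaled-ellipsoid argument from Lemma \ref{lem:ind2a}. Using the direct-sum decomposition $\delta\p = \alpha \p' + \b$ with $\alpha\in\RR$ and $\b\in\overline{K}$, the condition $\delta\p = \delta\p(s;y_0,\p_0''')$ with $(y_0,\p_0''')\in S^{N-1}$ amounts to $y_0 s = \alpha$ and $y_0^2 \p'' s^2 + \p_0''' s^3 = \b$ together with $y_0^2 + |\p_0'''|^2 = 1$. Eliminating $y_0$ and $\p_0'''$ reduces the problem to finding a positive $s \le \delta$ satisfying
\begin{equation*}
\rho_s(\alpha,\b) \;:=\; s^6 \;-\; s^4 \alpha^2 \;-\; \bigl|\b - \alpha^2 \p''\bigr|^2 \;=\; 0.
\end{equation*}
Given $\delta\in(0,\eps]$, the continuous function $\rho_\delta$ satisfies $\rho_\delta(0,0) = \delta^6 > 0$, so $U := \{(\alpha,\b) : \rho_\delta(\alpha,\b) > 0 \text{ and } |\alpha| < \delta\}$ is an open neighborhood of the origin. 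For any nonzero $(\alpha,\b)\in U$, either $\rho_0(\alpha,\b) = -|\b - \alpha^2\p''|^2 < 0$, in which case the intermediate value theorem yields an $s\in(0,\delta]$ with $\rho_s = 0$; or $\b = \alpha^2 \p''$, which forces $\alpha\neq 0$ and then $\rho_s = s^4(s^2-\alpha^2)$ has the positive root $s = |\alpha|\in(0,\delta]$. In either situation one recovers the required parameters $(y_0,\p_0''')\in S^{N-1}$ and $s\in[0,\delta]$ with $\delta\p = \delta\p(s;y_0,\p_0''')$, so the family initially covers a neighborhood of the origin. Combining (I1)--(I4) completes the proof.
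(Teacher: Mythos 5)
Your proof is correct and follows essentially the same route as the paper: (I1)--(I2) are immediate, (I4) is handled by the same case split on $y_0$ together with Theorem \ref{thm:coin} (your direct coefficient computation and odd-order sign argument are minor variants of the paper's reparameterization/truncation step), and your surface $\rho_s(\alpha,\b)=0$ is exactly the paper's pencil $H_s$, with the intermediate-value argument in $s$ playing the role of the paper's interior/exterior continuity argument. No gaps.
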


\begin{proof}
  The unit sphere is compact (property (I1)),
  and the parameterization is
  plainly smooth (property (I2)).
  Now we establish
  the 5-vanishing property (I4). There are
  two cases, depending on whether $\alpha = 0$.
  When
  $\alpha\neq 0$,
  then when  $\p(t)$ is a $(1,2)$ flex,
  so too
  is $\p(\alpha t)$ (by reparameterization), and so too is
  $\p(\alpha t) + \a^{(3)} t^3$
  (by truncation
  (see Remark~\ref{rem:trunc})).
  So from Theorem~\ref{thm:coin},
  $\p(\alpha t) + \a^{(3)} t^3$ is also a $(1,5)$ E-flex.
  When $\alpha=0$, then $\p+\a^{(3)}t^3$ is technically a $(3,2)$  flex,
  and thus also a $(3,5)$ E-flex.
  Thus each trajectory
  $\delta \p(t;\alpha,\a^{(3)})$
  is a $(j,5)$ f-flex for some $j$.


  The most involved step is to establish
  the initial coverage property (I3).
  To this end,  
  let us write a point in pinned configuration space
  as $\x+\y$ with  $\y \in K$ and $\x \in \overline{K}$.
  For a fixed $s \neq 0$, define the surface $H_s$ 
  in pinned configuration space,
  to be the set of points 
  $\x+\y$
  that satisfy
  \ba
  {s^4} |\y|^2 +
  |\x - ( (\y\cdot \a^{(1)})^2 \a^{(2)} )|^2 - s^6 = 0.
  \ea
  The interior of $H_s$ consists of  points
  where the left hand side is negative,
  while the exterior of $H_s$ consists of  points
  where the left hand side is positive.

  Still keeping $s$ fixed, define the invertible map
  taking $(\alpha,\a^{(3)})$ to $\x+\y$
  with
  \begin{align*}
      \x &:=\alpha^2\a^{(2)}s^2 +\a^{(3)}s^3\\
      \y &:= \alpha  \a^{(1)} s
  \end{align*}
  Then $\x,\y$ so defined are on the surface $H_s$, if and only if $(\alpha,\a\me{3})$ are on the unit sphere in parameter space, as can be verified using the following algebra:
  \ba
  s^4 |\y|^2 +
  |\x - ((\y\cdot \a^{(1)})^2 \a^{(2)}) |^2 - s^6
  &=&
  s^4 |\alpha\a^{(1)}s|^2 +
  |\alpha^2\a^{(2)}s^2
  +\a^{(3)}s^3 - ((\alpha\a^{(1)}s \cdot \a^{(1)})^2 \a^{(2)})|^2 - s^6 \\
  &=&
  s^6 \alpha^2 +
  |\alpha^2\a^{(2)}s^2
  +\a^{(3)}s^3 - (\alpha^2  s^2 \a^{(2)} ) |^2 - s^6 \\
  &=&
  s^6 \alpha^2 +
  |\a^{(3)}s^3|^2  - s^6 \\
  &=&
  s^6 (\alpha^2 + |\a^{(3)}|^2  - 1).
  \ea
 
  Let $0<\delta <\eps $ be given.
  Let us consider the surface  $H_\delta$.
  For a small enough neighborhood $U$ of the origin,
  each point $\x+\y \in U$ is in the interior of $H_\delta$.
  Meanwhile, for each point $\x+\y \in U$ and not equal to
  $0$, there must be a sufficiently small value
  $r>0$ so that $\x+\y$ is in the exterior of $H_r$.
  Thus by continuity, there must be an intermediate
  value $r<t<\delta$ such that $\x+\y$ is on $H_t$.
  Thus for this $t$, we have
  $\x =\alpha^2\a^{(2)}t^2 +\a^{(3)}t^3$ and
  $\y = \alpha  \a^{(1)} t$,
  for some
  $(\alpha,\a^{(3)})$ on the unit sphere.
\end{proof}

The same argument applied to general $k$ gives us:
\begin{lemma}
  \label{lem:standInd}
  Assume $\dim(K)=1$.
  Suppose that $(G,\p)$ has a
  $(1,k-1)$ flex
  \ba
  \p(t) := \p + \a^{(1)}t +  \a^{(2)}t^2 +...+\a^{(k-1)} t^{k-1}
  \ea
  with $|\a^{(1)}|=1$ and
  with
  $\a^{(j)} \in \overline{K}$ for
  $j\ge 2$.
  Then the family
  \bna
  \label{eq:familyk}
  \delta\p(t;\alpha,\a^{(k)}) := \alpha \a^{(1)}t + \alpha^2 \a^{(2)}t^2 +...+ \alpha^{k-1}
  \a^{(k-1)}t^{k-1} + \a^{(k)} t^{k}
  \ena
  parameterized over $(\alpha,\a^{(k)})$ 
  in
  $\RR \times \overline{K}$
  with $|\alpha|^2+|\a^{(k)}|^2=1$
  is indicative at order $2k$
  for $f$.
\end{lemma}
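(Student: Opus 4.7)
The plan is to verify the four properties (I1)--(I4) of Definition \ref{def:indic} for the family \eqref{eq:familyk}, following the same template as the proof of Lemma \ref{lem42}. Properties (I1) and (I2) are immediate, since $S^{N-1}$ is compact and the parameterization is polynomial in $(t, y_0, \p_0^{(k)})$.

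For property (I4), I would split into cases based on whether $y_0 = 0$. When $y_0 \neq 0$, the reparameterization $\p(y_0 t)$ of the given $(1,k-1)$ flex is again a $(1,k-1)$ flex, and appending $\p_0^{(k)} t^k$ does not affect derivatives of $\m$ of order $\le k-1$, so $\p(y_0 t) + \p_0^{(k)} t^k$ remains a $(1, k-1)$ flex. By Theorem \ref{thm:coin} it is a $(1, 2k-2)$ E-flex, and by the parity statement in the same theorem a $(1, 2k-1)$ E-flex. When $y_0 = 0$, the trajectory $\p + \p_0^{(k)} t^k$ is trivially a $(k, k-1)$ flex (any $k$-active trajectory has $\m$ automatically $(k-1)$-vanishing), hence a $(k, 2k-1)$ E-flex by Theorem \ref{thm:coin}. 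In either case, $f(\delta \p(t; \cdot))$ is $(2k-1)$-vanishing, which is what (I4) demands.

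The main work is property (I3), generalizing the surface argument in Lemma \ref{lem42}. The key idea is to define, for each $s \neq 0$, the level set $H_s \subset \RR^N$ as the zero set of
\[
F_s(\x, \y) := s^{2(k-1)} |\y|^2 + \Bigl| \x - \sum_{j=2}^{k-1} (\y \cdot \p')^j \p^{(j)} \Bigr|^2 - s^{2k},
\]
where $\y \in K$ and $\x \in \overline{K}$. I would then show that the map $(y_0, \p_0^{(k)}) \mapsto (\x_1, \y_1)$ given by $\x_1 = \sum_{j=2}^{k-1} y_0^j s^j \p^{(j)} + \p_0^{(k)} s^k$ and $\y_1 = y_0 s \p'$ sends $S^{N-1}$ bijectively to $H_s$. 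Using $|\p'| = 1$ and $\p^{(j)} \in \overline{K}$ for $j \ge 2$, the sum telescopes so that $\x_1 - \sum_{j=2}^{k-1} (\y_1 \cdot \p')^j \p^{(j)} = \p_0^{(k)} s^k$, and $F_s$ pulls back to $s^{2k}(y_0^2 + |\p_0^{(k)}|^2 - 1)$. With the bijection in hand, initial coverage follows from the same intermediate value argument as in Lemma \ref{lem42}: given $\delta > 0$, a small enough neighborhood of the origin lies inside $H_\delta$, while any nonzero point lies outside $H_r$ for small $r$, so continuity in $r$ forces it onto some $H_t$ with $0 < t < \delta$, corresponding to a unique parameter pair in $S^{N-1}$.

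The main obstacle is verifying the pullback computation in (I3); everything hinges on choosing the weights $s^{2(k-1)}$ and $s^{2k}$ in $F_s$ together with the correct scalings in the parameterization so that the telescoping cancellation goes through. Once this is in place, the proof closes exactly as in the $k=3$ case, and no new ideas beyond those already used in Lemma \ref{lem42} are required.
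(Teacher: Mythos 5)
Your proposal is correct and follows essentially the same route as the paper: the paper's own proof simply says the argument of Lemma \ref{lem42} carries over, displaying exactly the surface $H_s$ with weights $s^{2k-2}|\y|^2$ and $-s^{2k}$ and the map $\x_1 = y_0^2\p''s^2+\cdots+y_0^{k-1}\p^{(k-1)}s^{k-1}+\p_0^{(k)}s^k$, $\y_1 = y_0\p's$ that you use, and your (I4) case split ($y_0\neq 0$ via reparameterization/truncation and Theorem \ref{thm:coin}; $y_0=0$ via the $(k,k-1)$-flex observation) matches the $k=3$ argument being generalized. You have just filled in the details the paper leaves implicit, with no substantive deviation.
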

\begin{proof}
  The proof follows the same logic as that of Lemma~\ref{lem42}.
  For general $k$ we define the surface $H_s$ using
  \ba 
  {s^{2k-2}} |\y|^2 +
  |\x - (    (\y\cdot \a^{(1)})^2 \a^{(2)}     +\cdots +     (\y\cdot \a^{(1)})^{(k-1)} \a^{(k-1)}   )|^2 - s^{2k} = 0.
  \ea
  The map
  taking $(\alpha,\a^{(k)})$ to $\x+\y$
  is defined as
  \ba
  \x :=
  \alpha^2 \a^{(2)}s^2 +...+ \alpha^{k-1}
  \a^{(k-1)}s^{k-1} + \a^{(k)} s^{k}, \qquad
  \y := \alpha  \a^{(1)} s.
  \ea
\end{proof}

\begin{lemma}
  \label{lem:isactiveK}
  Let $(G,\p)$ be a framework with $\dim(K)=1$ that has a $(1,k-1)$
  flex.  Suppose further that the
  $2k$th derivative test applied to the function $f$ using the
  family of indicative trajectories
  defined in \eqref{eq:familyk} is inconclusive.  Then, if
  \[
    \delta\p(t;\alpha,\a^{(k)}) := \alpha \a^{(1)}t + \alpha^2 \a^{(2)}t^2 +...+
    \alpha^{k-1}\a^{(k-1)}t^{k-1} + \a^{(k)} t^{k}
  \]
  is an indicative trajectory such that
  $f(\delta\p(t;\alpha,\a^{(k)}))$ vanishes through order $2k$, then
  $\alpha\neq 0$.  This, in particular, implies that the associated trajectory
  \[
    \p(t) := \p + \delta\p(t;\alpha,\a^{(k)})
  \]
  is $1$-active.
\end{lemma}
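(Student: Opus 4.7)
The plan is to argue by contradiction, mimicking the proof of Lemma \ref{lem:isactive} with the quadratic reparameterization $\tau = t^2$ replaced by $\tau = t^k$. Suppose $y_0 = 0$; then the unit sphere constraint $y_0^2 + |\p_0^{(k)}|^2 = 1$ forces $|\p_0^{(k)}| = 1$, and the indicative trajectory collapses to $\delta\p(t;0,\p_0^{(k)}) = \p_0^{(k)} t^k$, which lies entirely in the subspace $\overline{K}$.

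Next I would restrict $f$ to $\overline{K}$, calling the restriction $\tilde f$. Because $E$ is stiff-bar at $\p$, Lemma \ref{lem: stiff-bar Hessian K} identifies the kernel of $H_f(0) = H_E(\p)$ as exactly $K$, and since $\overline{K}$ is chosen complementary to $K$, the Hessian of $\tilde f$ at the origin is positive definite. Thus the standard second derivative test certifies that $\tilde f$ has a strict local minimum at $0$.

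Reparameterizing by $\tau = t^k$ then converts the trajectory in $\overline{K}$ into the linear trajectory $\tilde q(\tau) := \p_0^{(k)} \tau$, which is a linear indicative trajectory at order $2$ for $\tilde f$. The assumption that $f(\delta\p(t;0,\p_0^{(k)}))$ vanishes through order $2k$ in $t$ is equivalent to $\tilde f(\tilde q(\tau))$ vanishing through order $2$ in $\tau$, which would make the second derivative test inconclusive for $\tilde f$. This contradicts the strict local minimum at $0$ (whose leading coefficient along $\tilde q$ is $\tfrac{1}{2}(\p_0^{(k)})^T H_{\tilde f}(0) \p_0^{(k)} > 0$). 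Hence $y_0 \neq 0$.

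For the $1$-active claim, since $\dim(K) = 1$ and $\p'$ was chosen as a unit vector spanning $K$, the coefficient of $t^1$ in $\delta\p(t;y_0,\p_0^{(k)})$ is $y_0 \p' \neq 0$, so the associated trajectory $\p(t) = \p + \delta\p(t;y_0,\p_0^{(k)})$ is $1$-active. The only subtle step is recognizing that the positive-definiteness of $H_f(0)$ on $\overline{K}$ (which is immediate from the stiff-bar hypothesis via Lemma \ref{lem: stiff-bar Hessian K}) is precisely what rules out a pure $\p_0^{(k)} t^k$ flex after the reparameterization $\tau = t^k$; no higher-order estimates or uniform bounds are needed, as the obstruction already appears in the quadratic form on $\overline{K}$.
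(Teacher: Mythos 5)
Your proposal is correct and follows essentially the same route as the paper's proof: assume $y_0=0$, note the trajectory then lies in $\overline{K}$ where the Hessian of $f$ is positive definite (so the restricted second derivative test certifies a strict local minimum), reparameterize by $\tau=t^k$ to get a linear trajectory whose order-$2$ vanishing in $\tau$ contradicts that test, and conclude $y_0\neq 0$, whence $1$-activity is immediate from the $t$-coefficient $y_0\p'$. Your explicit justification of positive definiteness on $\overline{K}$ via Lemma \ref{lem: stiff-bar Hessian K} and the PSD-plus-complementarity argument is a fine elaboration of what the paper states more tersely.
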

\begin{proof}
  Let $\delta\p(t;\alpha,\a^{(k)})$ be as in the statement.
  Suppose, for a contradiction, that $\alpha = 0$. Let $\tilde{f} :=
  f|_{\overline{K}}$.
  Since the Hessian of $f$ is definite on $\overline{K}$, the second
  derivative test will certify that $\tilde{f}$ has a strict local
  minimum at the origin.  Now define a trajectory
  $\delta\tilde{\p}(\tau(t);0,\a^{(k)}) := \tau(t)\a^{(k)}$
  with $\tau(t) := t^{k}$.  The trajectory
  $\delta\tilde{\p}(\tau(t))$ is linear in $\tau$, and
  so indicative for the second derivative test.  By construction
  $f(\delta\tilde{\p}(\tau(t)))$
  vanishes through second order in $\tau$, making the second
  derivative test inconclusive for
  $\tilde{f}$.  The resulting contradiction implies that the
  assumption $\alpha = 0$ was false.
\end{proof}

At this point we have the setup to prove
Theorem~\ref{thm:main2}.  The argument is structurally
similar to the proof of Theorems \ref{thm:main0} and \ref{thm:main}.

\begin{proof}[Proof of Theorem~\ref{thm:main2}]
  Let $(G,\p)$ be as in the statement and let $E$ be an energy that
  is stiff-bar at $\p$.  
      The framework $(G,\p)$ has a $(1,k-1)$-flex
  which we can choose such that
$|\a^{(1)}|=1$  (using a reparameterization in $t$),
and
$\a^{(j)} \in \overline{K}$
for $j>1$
(using Lemma~\ref{lem:kcomp}).

  Since $(G,\p)$ has a $(1,k-1)$-flex, by Theorem
  \ref{thm:coin}, it has a $(1,2k-2)$ $E$-flex.
  Thus, from Lemma~\ref{lem:eflex},
  the rigidity order of $(G,\p)$ must be at least $k$.
  Additionally, we may apply Lemma \ref{lem:standInd} to
  conclude that the family of trajectories
  $\{\delta\p(t;\alpha,\a^{(k)})\}$ is indicative
  at order $2k$ for the function $f$ at the origin.

  Suppose that the rigidity order of $(G,\p)$ is greater than $k$.
  This implies that $f$ does not grow always-$2k$-quickly at the
  origin. Because the trajectories $\{\delta\p(t;\alpha,\a^{(k)})\}$
  are indicative at order $2k$ for $f$ at the origin and $f$ is
  non-negative, Proposition \ref{prop:indic} implies that, for some
  parameters $\alpha$ and $\a^{(k)}$,
  $f(\delta\p(t;\alpha, \a^{(k)}))$ vanishes through order $2k$.
  Hence, $\delta\p(t;\alpha, \a^{(k)})$ is a $(j,2k)$ $f$-flex for
  some $j$.  By Lemma \ref{lem:isactiveK}, we have $\alpha\neq 0$,
  so $\delta\p(t;\alpha, \a^{(k)})$ is a $(1,2k)$ $f$-flex at the
  origin.

  The definition of $f$ then implies that
  $\p(t) = \p + \delta\p(t;\alpha, \a^{(k)})$ is a $(1,2k)$ $E$-flex of
  $(G,\p)$.  Finally, by Theorem \ref{thm:coin}, $\p(t)$ is a
  $(1,k)$-flex of $(G,\p)$.
  This is the contrapositive of the first part.

  For the second part, if $(G,\p)$ has a $(1,k)$-flex, then Theorem
  \ref{thm:coin} implies that it has a $(1,2k)$ $E$-flex.  Lemma
  \ref{lem:eflex} then implies that $E$ grows sometimes-$2k+2$-slowly
  at $\p$.  Hence, if $(G,\p)$ is rigid, its rigidity order is at least
  $k+1$.
\end{proof}

\begin{remark}
  \label{rem:genCush}
  Even assuming that $\dim(K)=1$,
  the vanishing property, required for
  indicativeness of the family
  of Equation (\ref{eq:familyk}),
  is harder to directly prove for a general
  energy function $f(\q)$.
  In particular,  one has to rule out the possibility that
  due to the inclusion of  the
  $\a^{(k)}t^k$ in $\delta(\p)(t)$, that
  $f(\delta \p(t))$  no longer vanishes through
  order $2k-2$.
  It turns out that the vanishing property of this family,
  for a general function $f$, in the case
  $\dim(K)=1$,
  can actually be proven using a clever series of
  variable changes described  by Cushing~\cite{cushing}.
  It also turns out that
  Cushing's sequence of  variable changes
  can be
  interpreted as a sequence of ``singularity blow-ups''
  applied to the
  variety $f=0$~\cite{greg}.
\end{remark}

\section{Examples}

\begin{figure}[ht]
  \centering
  \includegraphics[scale=0.5]{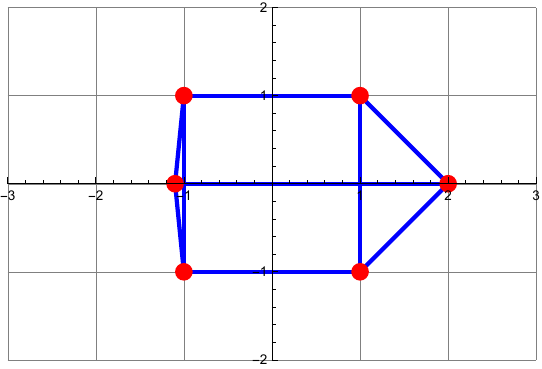}
  \includegraphics[scale=0.5]{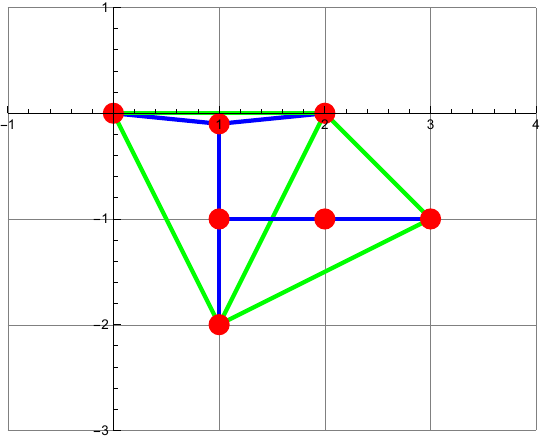}
  \captionsetup{labelsep=colon,margin=1.3cm}
  \caption{
    Left:  The half flat prism has rigidity order $4$.
    The middle left vertex has been shifted for visualization
    purposes.
    Right: The Leonardo-3 framework~\cite{leonardo} has a rigidity order
    of $8$. The upper middle vertex has been shifted for visualization
    purposes. The green edges are used so that no
    pins are needed in the framework.
    The rigidity orders of these examples persist under
    projective transformations.
  }
  \label{fig:k33}
\end{figure}

\begin{figure}[ht]
  \centering
  \includegraphics[scale=0.5]{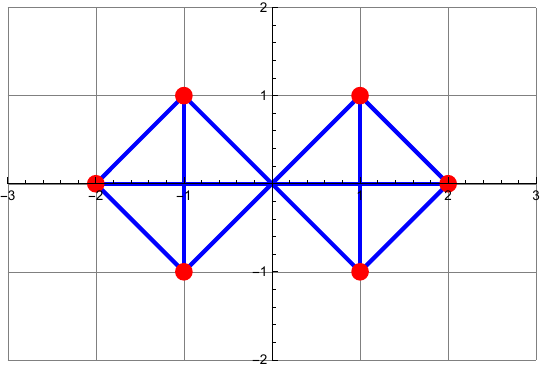}
  \includegraphics[scale=0.5]{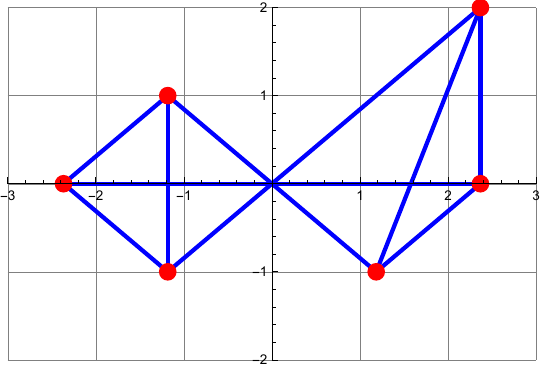}
  \includegraphics[scale=0.5]{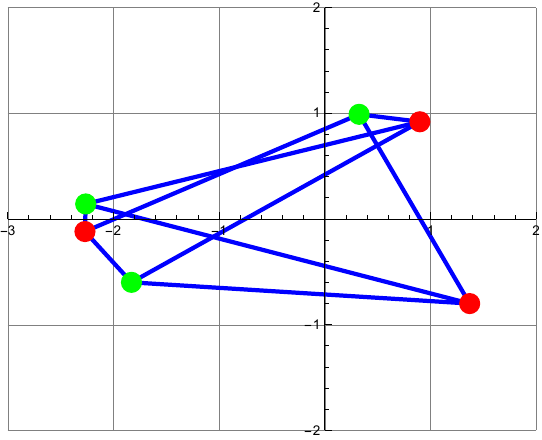}
  \captionsetup{labelsep=colon,margin=1.3cm}
  \caption{Left: This symmetric
    flipped triangular prism has a rigidity order of $4$.
    Middle: This non-symmetric flipped prism has a rigidity
    order of $3$. Right: This framework of $K_{3,3}$ has a
    rigidity order of $3$. (The bipartite partitions are visualized
    in red and green.)
    All three become second-order rigid
    under a generic affine transform.
  }
  \label{fig:xnan2}
\end{figure}

\begin{figure}[ht]
  \centering
  \includegraphics[scale=0.5]{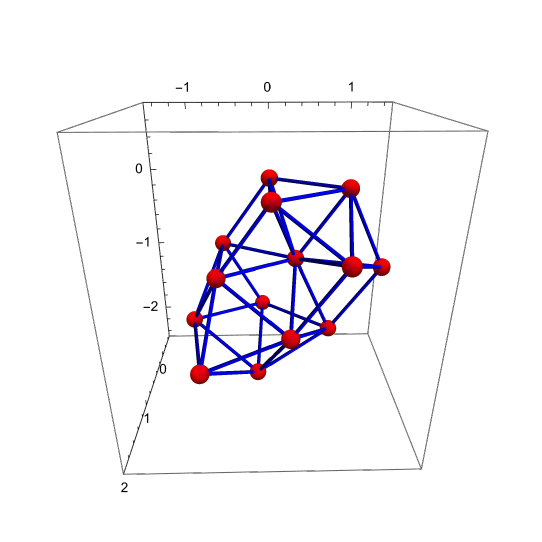}
  \includegraphics[scale=0.5]{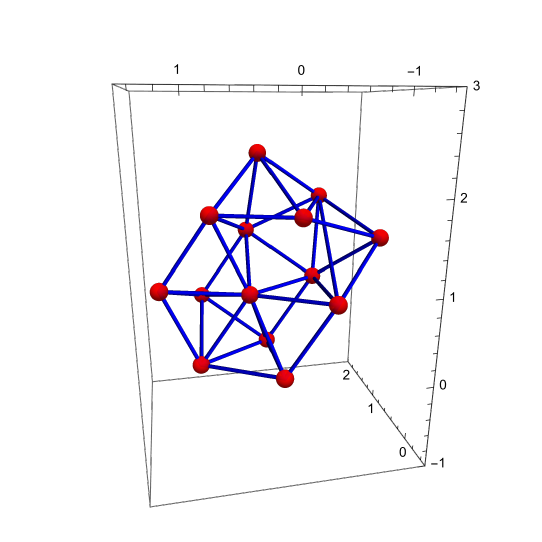}
  \includegraphics[scale=0.5]{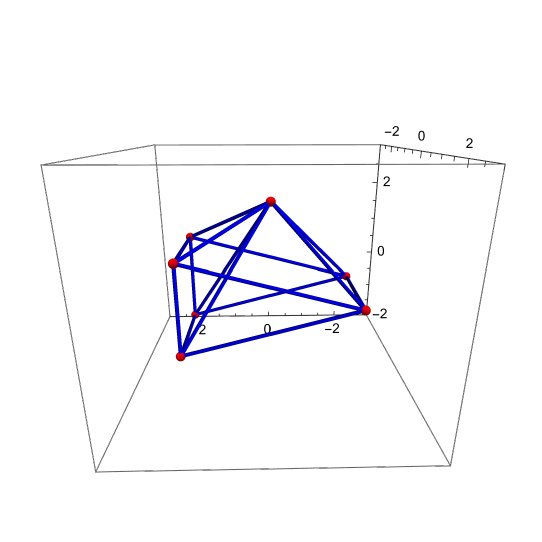}
  \captionsetup{labelsep=colon,margin=1.3cm}
  \caption{
    Left, Middle: These frameworks arise
    from packings of
    $14$ identical spheres.  They were found in
    \cite{holmes2016enumerating} but were discarded from the
    data presented in that paper because they did not pass the
    test for prestress stability.
    Both have a rigidity order of $3$.
    Right: A framework of the coned triangular prism.
    It has a rigidity order of $4$.
    All three become second-order rigid
    under a generic affine transform.
  }
  \label{fig:sph2}
\end{figure}

In Figures~\ref{fig:k33}, \ref{fig:xnan2} and \ref{fig:sph2} we
show a number of two and three
dimensional frameworks and discuss their rigidity orders. All
have $\dim{K}=1$, so their rigidity orders
can efficiently be (numerically) computed by iteratively
solving \eqref{eq:flex} for $(1,k)$ flexes until one is not
found. The coordinates for these examples are given in Appendix
section \ref{sec:coords}.
The examples in Figures~\ref{fig:xnan2}(mr) and~\ref{fig:sph2}(r) were produced using  
an equilibrium stress based technique. As stresses are beyond the scope of this 
paper, we defer the explanation of that technique to our companion paper~\cite{HOpre}.

The rigidity order of the frameworks
in Figures~\ref{fig:k33} and ~\ref{fig:sph2} drop down to $2$ under
generic affine transformations. The
exact coordinates used here were found
using a numerical optimization, and
are available in Appendix~\ref{sec:coords}.

\section{Extensions to more general constraints}

So far our exposition has considered vertices with distance
constraints between pairs of vertices.
However, constraints arise in a number of other systems for
which one might be similarly interested in the system's
rigidity or the behavior of an energy function near a critical
point. For example, one may wish to consider points constrained
to lie on a particular edge \cite{santangelo1}, or rigid bodies
in contact \cite{donev2007underconstrained}, or area and
perimeter constraints on collections of vertices,  such as
arise in  ``vertex models'' used to study tissue mechanics
\cite{manning2024rigidity}. Our results extend without any
difficulty to a more general setup; in this section we outline how.

In the general setup, we have a set of \defn{configuration
variables} $\q\in \R^n$, and a set of \defn{measurement
variables} $\m(\q)=(m_\alpha(\q))_{\alpha\in \mathcal A}$,
where the functions $m_\alpha:\R^n\to \R$ are analytic at some
configuration $\p$, and $\mathcal A$ is a finite set of indices.
An analytic trajectory $\p(t)$ with $\p(0)=\p$ is a
\defn{$(j,k)$-flex} (at $\p$) if $\p(t)$ is $j$-active and
$\m(\p(t))$ is $k$-vanishing.

We are interested in the rigidity of the pair $(\m,\p)$ of
measurement variables with a specific configuration. We say
$(\m,\p)$  is  \defn{rigid}  if there does not exist a
continuous trajectory $\p(t)$  at $\p$ such that $\m(\p(t)) =
\m(\p)$ for all $t$ in a neighborhood of 0, except when $\p(t)=
\p$ for all $t$.  We do not concern ourselves here with
generalizations of pinning, as we assume
for simplicity that our measurements are not
invariant with respect to isometries
acting on $\q$ (for example, our problem includes
sufficiently many boundary constraints.)

We furthermore have a \defn{measurement-based energy}
\[
  E(\q) = \sum_{\alpha \in \mathcal A} E_{\alpha}(m_\alpha(\q)),
\]
and we say $E$ is \defn{stiff-bar for $(\m,\p)$} if each
$E_\alpha$ is analytic at $\p$, and has a strict local minimum
with positive curvature at $m_\alpha(\p)$. As before, a
\defn{$(j,k)$ E-flex} is a $j$-active trajectory $\p(t)$ with
$\p(0)=\p$ such that $E(\p(t))$ is $k$-vanishing.

An energy $E$ which is stiff-bar at $\p$ is analytic at $\p$,
so all of our results apply, connecting flexes, E-flexes,
rigidity orders, and energy growth, as these results did not
depend on the specific forms of $E$ or $\m$, but only on their
analyticity. In particular, we have the following results:
\begin{itemize}
  \item (Theorem \ref{thm:coin}.) A trajectory $\p(t)$ is a
    $(j,k)$ flex iff it is a  $(j,2k)$ E-flex.
  \item (Theorem \ref{thm: order jk-flex}.) If $(\m,\p)$ is
    rigid, then there exists an $s\in\mathbb Q$ such that every
    stiff bar energy for $(\m,\p)$ has tight growth order of
    $s$. Furthermore, $s$ may be computed by considering the
    $(j,k)$ flexes, as in \eqref{sflex}.
    Therefore, the \defn{rigidity order} $\nu=s/2$ of $(\m,\p)$
    (Definition \ref{defn:order}) is well-defined.
  \item (Theorems \ref{thm:main0}, \ref{thm:main},
    \ref{thm:main2}.) These  apply  verbatim, and connect the
    existence or not of certain $(j,k)$ flexes, to the rigidity
    order of $(\m,\p)$.

\end{itemize}

The only part of our theory which changes, with a change in the
measurement variables, is the equations used to solve for
flexes. Instead of \eqref{eq:flex}, one will have a different
system of equations, obtained by taking time derivatives of the
functions $\{m_\alpha(\p(t))\}_{\alpha \in \mathcal A}$.

\def\v{\oldv}

\appendix

\section{Rigidity and pinning}
\label{sec:pin}
In order to understand pinning, we first describe a procedure 
to convert any configuration $\p$ into a 
pinned configuration $\q$ by applying a translation, followed by 
a sequence of rotations around (carefully chosen) coordinate  
subspace axes.  If $T$ is an isometry of $\RR^d$, we will denote
by $T\p$ the application of $T$ to each of the $\p_i$.

\begin{lemma}\label{lem: pin by isom}
Let $\p$ be a  configuration in $\RR^d$. 
Then there is an isometry $T$ of $\RR^d$ such that $T\p$ is pinned.
\end{lemma}
\begin{proof}
To set up some notation, we denote by $\{e_1, \ldots, e_d\}$
the standard basis vectors, and for a set $E$ of the basis 
vectors, by $\R^E$ the corresponding coordinate subspace.
By $R_{i,j}(x,y)$, where we assume $y \neq 0$,
we denote the rotation around the 
axis $(\R^{\{e_i,e_j\}})^\perp$ that sends the 
vector $xe_i + ye_j$ to $(x^2 + y^2)^{1/2}e_i$.
For example, in $\R^4$, 
\[
    R_{3,4}(x,y) = 
    \begin{bmatrix}
        1 & 0 & 0 & 0 \\
        0 & 1 & 0 & 0 \\
        0 & 0 & \frac{x}{(x^2 + y^2)^{1/2}} & \frac{y}{(x^2 + y^2)^{1/2}} \\
        0 & 0 & \frac{-y}{(x^2 + y^2)^{1/2}} & \frac{x}{(x^2 + y^2)^{1/2}} \\
    \end{bmatrix}.
\]

The first step in computing an isometry $T$ is to translate $\p$ so that  $\p_1$ moves
to the origin.  
The remaining steps are defined inductively:
Suppose that 
for some $1\le i < D$,
$\p$ is a 
configuration 
such that, for $1\le j \le i$, $\p_j$ is in the span of the first $j-1$ 
coordinate vectors (when $j-1=0$ we take this span to be just  $\{{\bf 0}\}$).  
Now we will apply a 
series of rotations to $\p$ so that $\p_{i+1}$ lies in 
the span of the first $i$ coordinate vectors; i.e., these rotations zero out the 
last $d-i$ coordinates of $\p_{i+1}$.  We will choose these rotations so that
 $\p_1, \ldots, \p_i$ are unchanged.

We find the rotations to zero out theses coordinates by an inner induction.
Let $\p_{i+1} = (x_1, \ldots, x_d)$ and suppose that, for some $i < j \le d$, 
we have $x_{j} \neq 0$, and that
all $x_k$ with $j<k \le d $ are equal to $0$.
The rotation $R_{i,j}(x_{i},x_j)$ zeros out the 
$j$th coordinate of $\p_{i+1}$. Meanwhile, it only affects the $i$th and$j$th 
coordinates in $\p$.
In particular 
it keeps
all $x_k$ with $j<k \le d $  equal to $0$
and  $\p_1, \ldots, \p_i$ remain in 
pinned position.  This closes the inner induction, and so also the outer 
one.
(The rotation $R_{m,j}(x_{m},x_{j})$
arising from 
any index, $i\le m < j$ would  work in this step, but our choice
of $i$ is useful for the proof of 
Lemma~\ref{lem:close} below).

Composing the rotations from each step gives the desired isometry $T$.
\end{proof}

A detailed analysis of the above process will let us establish the following.
\begin{lemma}
\label{lem:close}
Let $\p$ be a lead-spanning and pinned configuration.
Let $U$ be a neighborhood of $\p$. 
Then there is smaller neighborhood
$V\subseteq U$ of $\p$ with the property that any $\q\in V$
is congruent to a pinned configuration in $U$.
\end{lemma}
\begin{proof}
The goal is to show that, 
as $\q \to \p$, the transformation $T$ in the conclusion 
of Lemma \ref{lem: pin by isom} satisfies $\|T\q - \q\|\to 0$, 
from which we conclude that $\|T\q - \p\|\to 0$.

To this end, we analyze the construction from the proof 
of Lemma \ref{lem: pin by isom}, applied to 
$\q$, step-by-step.  For the first step, we observe that the 
translation $\x\mapsto \x - \q_1$ converges to the 
identity as $\q\to \p$, since $\p_1 = 0$.  Now 
we suppose that we are working on $\q_{i+1} = (x_1, \ldots, x_d)$ 
and are setting 
the variable $x_{j}$ to zero (so $j > i$ and for $j < k \le d$,
$x_k =0$).  

Assume first that  $e_i$ is in the 
affine span of $\p$.  In this case, since $\p$ is lead spanning and pinned, 
the $i$th coordinate of $\p_{i+1}$ is non-zero and its $j$th
coordinate is zero.  Hence, as $\q\to \p$, the rotation
$R_{i,j}(x_i,x_j)$
converges to the identity.

Now assume that   $e_i$ 
is outside the affine span of $\p$.  
Because $\p$ is pinned and lead spanning, its affine span 
is the coordinate subspace spanned by $e_1, \ldots, e_\ell$, for some $\ell$,
so we have $\ell < i < j$, and so $e_{j}$ is 
also outside the affine span of $\p$.
Hence, as $\q\to \p$, any point $\q_k$
approaches the null space of $R_{i,j}(x_i,x_{j}) - I$.
Since both $R_{i,j}(x_i,x_{j}) - I$ and $\q_k$ have
bounded norm, we conclude that $\|R_{i,j}(x_i,x_{j})\q_k - \q_k\|\to 0$.

These cases are exhaustive, and so the result follows.
\end{proof}
\begin{lemma}\label{lem: pinning}
  Let $\p$ be a  
  lead-spanning and
  pinned configuration.  Then
  there is a neighborhood of $\p$ that does not contain any other
  pinned configuration of $n$ points that is congruent to $\p$.
\end{lemma}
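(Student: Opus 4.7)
The plan is to take any isometry $T$ of $\RR^d$ witnessing the congruence $\q = T\p$, and show that for $\q$ sufficiently close to $\p$ and $\ell$-pinned, $T$ must fix every point of $\p$, giving $\q = \p$. Since $\p_1 = \q_1 = 0$, the isometry $T$ fixes the origin and is therefore linear, i.e., $T \in O(d)$. The pinning of $\p$ together with its $\ell$-dimensional affine span implies that every $\p_i$ lies in the coordinate subspace $V := \operatorname{span}(\e_1, \ldots, \e_\ell)$. Because $T$ is an isometry it preserves affine independence, so $\q_1, \ldots, \q_{\ell+1}$ are affinely independent; combined with the $\ell$-pinning of $\q$, these vectors lie in $V$ and affinely span it. Hence $T(V) = V$, the restriction $T_1 := T|_V$ lies in $O(\ell)$, and in particular $\q = T_1 \p$ is entirely contained in $V$.

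The idea is now to reformulate the system $T_1 \p_i = \q_i$ as a single matrix equation and exploit the uniqueness of the QR decomposition. Let $P, Q \in \RR^{\ell \times \ell}$ be the matrices whose $k$-th columns are $\p_{k+1}$ and $\q_{k+1}$, viewed as vectors in $V$. The $\ell$-pinning makes both matrices upper-triangular; the non-degeneracy of $\p$ makes the diagonal of $P$ nonzero. By continuity, a sufficiently small neighborhood of $\p$ ensures that each diagonal entry of $Q$ has the same sign as the corresponding entry of $P$. The congruences $T_1 \p_{k+1} = \q_{k+1}$ for $k = 1, \ldots, \ell$ combine into $T_1 P = Q$, i.e., $T_1 = Q P^{-1}$. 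Since the inverse and product of upper-triangular matrices remain upper-triangular, $T_1$ is simultaneously upper-triangular and orthogonal. An elementary consequence of $T_1^\top T_1 = I$ is that any such matrix must be diagonal with $\pm 1$ entries; write $T_1 = \operatorname{diag}(\epsilon_1, \ldots, \epsilon_\ell)$. Reading off diagonal entries of $Q = T_1 P$ gives $Q_{jj} = \epsilon_j P_{jj}$, and the matching-sign condition forces $\epsilon_j = +1$ for every $j$, so $T_1 = I_\ell$. Since every $\p_i$ lies in $V$, we conclude $\q_i = T \p_i = T_1 \p_i = \p_i$ for all $i$; that is, $\q = \p$.

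The main obstacle is not conceptual but is purely bookkeeping: one has to verify that $V$ is stable under $T$ and choose the neighborhood of $\p$ small enough to realize simultaneously the finitely many open sign conditions on the diagonal of $Q$. The conceptual heart of the argument, namely the QR-uniqueness fact that an upper-triangular orthogonal matrix is a signed identity, is classical and an easy induction from $T_1^\top T_1 = I$.
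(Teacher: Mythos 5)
Your proof is correct, and all the needed ingredients are in place: the hypothesis that the affine span of $\p$ is $\ell$-dimensional is what lets you put every $\p_i$ (and hence, via $T(V)=V$, every $\q_i$) inside $V=\operatorname{span}(\e_1,\dots,\e_\ell)$, the non-degeneracy in the definition of ``pinned'' gives the invertibility of $P$, and the sign conditions on the diagonal of $Q$ are finitely many open conditions on $\q_2,\dots,\q_{\ell+1}$, so they do define a neighborhood of $\p$ in configuration space. Your route differs from the paper's in execution: the paper never writes the congruence as a matrix identity, but instead argues vertex by vertex, by induction on $i\le\ell+1$, that $\q_i=\p_i$; at each step the point $\q_{j+1}$ is determined by its distances to $\p_1,\dots,\p_j$ up to a reflection through the span of the first $j-1$ coordinate vectors, and shrinking the neighborhood excludes the reflected alternative. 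You instead make the residual isometry explicit, restrict it to $V$, and invoke the QR-uniqueness fact that an upper-triangular orthogonal matrix is $\operatorname{diag}(\pm 1)$, so the whole ambiguity (the same finite set of sign flips the paper eliminates one at a time) is dispatched in one stroke. The two arguments are really the same phenomenon seen from two sides; your linear-algebraic version is arguably tidier and is consonant with the Gram--Schmidt construction the paper itself uses later (in the proof of its trajectory-pinning lemma), while the paper's distance-based induction avoids having to first extend the congruence to a global isometry restricted to an invariant subspace (a step you should, and do, justify via $T(0)=0$ and preservation of affine independence).
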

\begin{proof}
 Suppose that $\p$ has an $\ell$-dimensional affine span.
  Let
  $\q$ be a pinned configuration congruent to $\p$,
  and let $T$ be  an isometry so that $\q = T\p$.
  If $\p_i = \q_i$ for all
  $1\le i\le \ell+1$, then $T$ is the identity on the affine
  span of $\p$, and hence on $\p$.
  Thus $\p = \q$.
  To finish the proof, we just need to find a neighborhood $U$ of $\p$ such that,
  if $\q\in U$ is a pinned configuration that is
  congruent to $\p$, then $\q_i = \p_i$ for all
  $1\le i\le \ell+1$.  This we do by induction on $i$.

  Let $U_1$ be any neighborhood of $\p$.  Let $\q\in U_1$
  be  pinned and congruent to $\p$.
  Since
  $\p$ and $\q$ are  pinned, certainly $\p_1
  = \q_1$ (they
  are both the origin).  This establishes the base of the induction.

  Suppose now, for some
  $1 < i < \ell+1$, we have a neighborhood $U_i$ of
  $\p$, so that, if $\q\in U_i$ is pinned and
  congruent to $\p$, we have
  $\q_j = \p_j$ for $1 \le j \le i$.  We will show that 
  there is a neighborhood
  $U_{i+1}\subseteq U_i$ of $\p$ such that, if $\q\in U_{i+1}$
  is pinned  and congruent to $\p$, then
  $\q_{i+1} = \p_{i+1}$.  To this end, let $\x$ be a
  point in the linear span of $e_1, \ldots, e_i$ such that,
  for each $1\le j\le i$, $|\p_j - \x| = |\p_j - \p_{i+1}|$.
  Because $\p$ is lead-spanning and $i \le \ell+1$, the $\p_j$
  are affinely independent.  It now follows that either $\x = \p_{i+1}$
  or $\x$ is the reflection (in the linear space spanned by the first $i$
  coordinate vectors) through the affine span of the $\p_j$
  (which is simply the linear space spanned by the first $i-1$
  coordinate vectors). Since there is a positive distance
  between $\p_{i+1}$
  and its reflection, shrinking $U_{i}$ sufficiently to remove this
  ambiguity closes the induction.

  We stop at $U_{\ell+1}$, which proves the
  lemma.
\end{proof}

\subsection*{Proof of Proposition \ref{prop: pinned rigid}}
Let $(G,\p)$ be a lead spanning framework and $(G,\q)$ be 
a pinned framework congruent to $\p$.  One exists by Lemma 
\ref{lem: pin by isom}. Because $\p$ is lead spanning, so is $\q$.

Suppose first that $(G,\p)$ is rigid.  As $(G,\q)$ is 
congruent to $\p$, it is also rigid.  Let $U\ni \q$ be a 
neighborhood of $\q$ so that if $\r\in U$ and $(G,\r)$ is 
equivalent to $(G,\q)$, then $\r$ is congruent to 
$\q$.  Let $V\ni \q$ be a neighborhood of $\q$ such that, 
if $\r\in V$ is a pinned configuration congruent to $\q$, then 
$\r = \q$.  One exists by Lemma \ref{lem: pinning}.
Let $\r \in U\cap V$ be a pinned configuration 
such that $(G,\r)$ is equivalent to $(G,\q)$.  Since $\r\in U$, 
$\r$ is congruent to $\q$, which, because $r\in V$, impies that 
$\r = \q$.  Hence $(G,\q)$ is pinned rigid.

For the other direction, suppose that $(G,\q)$ is pinned 
rigid.  Let $U\ni \q$ be a neighborhood of 
$\q$ such that, if $\r\in U$ is pinned and equivalent to $\q$, 
then $\r = \q$; $U$ exists because $(G,\q)$ is pinned 
rigid.  Let $V$ be a neighborhood $U\supseteq V\ni \q$ be a 
neighborhood of $\q$ such that, if $\r\in V$, then 
$\r$ is congruent to a pinned configuration $\x\in U$; $V$ 
exists by Lemma \ref{lem:close}.  Let $\r\in V$ be a 
configuration such that $(G,\r)$ is equivalent to $(G,\q)$.
Because $\r\in V$, $\r$ is congruent to a pinned configuration $\x\in U$.
Since $(G,\r)$ is equivalent to $(G,\q)$, so is $(G,\x)$.  Since 
$x\in U$, $\x = \q$, and, hence, $\r$ is congruent to $\q$.  This 
shows that $(G,\q)$ is rigid. Because $(G,\p)$ is congruent to 
$(G,\q)$, $(G,\p)$ is rigid.
\hfill $\qed$.

\section{Pinning and $(j,k)$ flexes}
We now extend the results in the previous section to 
show that whether or not a framework has a $(j,k)$-flex 
does not depend on how it is pinned.
\begin{definition}
  An unpinned  trajectory $\p(t)$
  is \defn{$k$-trivial}  if it agrees through $k$th order with
  $T(t)\p(0)$
  where $T(t)$ is an analytic
  trajectory of isometries
   with $T(0)=I$.
\end{definition}
\begin{definition}
  Let $(G,\p)$ be an unpinned framework.
  Let $j,k$ be integers $\ge 1$.
  An unpinned
  trajectory $\p(t)$ is an \defn{unpinned $(j,k)$-flex} if
  $\p(t)$ is $(j-1)$-trivial, but not $j$-trivial,
  and $\m(\p(t))$ is $k$-vanishing.
\end{definition}
\begin{remark}
  This is fundamentally the same as the definition of
  Stachel~\cite{stachel,narwal}, since from
  {\cite[Lemma 4.2.1]{pss}}, the leading $k$ derivatives
  of a $k$-trivial trajectory
  can be always zeroed out under an appropriate analytic
  trajectory of isometries.
\end{remark}
With the definitions in place, we state the result of 
this section.
\begin{proposition}\label{prop:jkpin}
  Fix a dimension $d$.
  Let $(G,\p)$ be a framework such that $\p$
  is lead-spanning.
  Let $\q$ be a pinned configuration that is
  congruent to $\p$.
  Then $(G,\q)$ has an pinned $(j,k)$ flex iff
  $(G,\p)$ has an unpinned $(j,k)$ flex.
\end{proposition}

The main technical issue of the proof of this 
proposition is
dealing with 
the $j$-active property of a $(j,k)$ flex.

\begin{lemma}
  \label{lem:isoTraj}
  Let $\p(t)$ be an unpinned trajectory, with 
  $\p(0)$ lead spanning.
  Then there is an analytic trajectory of isometries
  $T(t)$,
  for $t\in[0,\eps)$ for some $\eps$,
  so that $\q(t) := T(t)  \p(t)$
  is a pinned trajectory.
\end{lemma}

The main idea of the proof of this lemma is
to simply apply the construction used
in the proof of Lemma~\ref{lem: pin by isom},
except applied to $\p_i(t)$
instead of $\p_i$.  
The only complication arises when the
affine span of $\p(0)$ is $\le d-2$.
In this case, the construction of the matrix
$R_{i,j}(x_i(t),x_{j}(t))$
can blow up at $t=0$ (with
$x_i(t)=0$ and $x_{j}(0)=0$).
We will deal with this using the following
factoring trick.

\begin{lemma}\label{lem: order}
Let $f_1, f_2 : \RR\to \RR$ be analytic at $0$,
and suppose there is a $k$ such that 
both $f_i$ vanish up to order $k-1$ at $t=0$
and that there is a $j\in \{1,2\}$ such that 
$f_j$ is  non-vanishing at order
$k$ at $t=0$.  
Write $F_i(t) := f_i(t)/t^{k}$.  
Then each $F_i(t)$ is analytic at $t=0$, and, 
additionally 
$(F_1^2(t) +  F_2^2(t))^{-1/2}$ 
is well defined and analytic 
$t=0$.
\end{lemma}
\begin{proof}
Because each of  the $f_i$ 
vanish up to order $k-1$, 
its power series has can be factored by
$t^{k}$.
Then $F_i(t):=f_i(t)/t^{k}$ 
has a power series with the same radius
of convergence as $f_i(t)$, so
$F_i(t)$
is analytic at $0$.
Since  $f_j(t)$ is non-vanishing
at order $k$ at $t=0$, we will have
$F_j(t)$ non-vanishing at $t=0$.  Hence, 
$(F_1^2(t) + F_2^2(t))$ is
non-vanishing and analytic at $t=0$.
Then so too is  
$(F_1^2(t) + F_2^2(t))^{1/2}$.
So $(F_1^2(t) + F_2^2(t))^{-1/2}$ is well defined and analytic at $t=0$.
\end{proof}
\begin{proof}[Proof of Lemma \ref{lem:isoTraj}]
We apply the construction used in the proof of Lemma
\ref{lem: pin by isom}, except applied to $\p_i(t)$
instead of $\p_i$.  
We need to to make the construction of the 
matrix $R_{i,j}(x_i(t),x_{j}(t))$  well 
defined and analytic at $t=0$.

If both $x_i(t)$ and $x_{j}(t)$ are
identically $0$, we simply skip this step. 

Otherwise, there is a 
smallest $k\ge 0$ so that at least one of 
$x_i(t)$ and $x_{j}(t)$ is non-vanishing at order $k$  at $t=0$.  
If $k=0$, we can use the rotation $R_{i,j}(x_i(t),x_{j}(t))$ directly from the 
construction in~\ref{lem: pin by isom}.
When $k > 0$, we know that both $x_i(0)=0$ and $x_{j}(0)=0$.
For this case, we define $X_i(t) := x_i(t)/t^{k}$ and $X_{j}(t) := x_{j}(t)/t^{k}$.
Lemma \ref{lem: order} shows that 
$R_{i,j}(X_i(t),X_{j}(t))$ is well defined and analytic at 
$t=0$.  At $ t > 0$, $R_{i,j}(X_i(t),X_{j}(t))$ corresponds to the 
same geometric rotation as $R_{i,j}(x_i(t),x_{j}(t))$. 
Thus it sets $x_{j}(t)=0$ as desired.
Meanwhile at $t=0$, we have 
$x_i(0)=0$ and $x_{j}(0)=0$, so $R_{i,j}(X_i(0),X_{j}(0))$ (whatever it may be)
 keeps $x_{j}(0)=0$.
\end{proof}

\begin{lemma}
  \label{lem:transfer}
  If an unpinned trajectory  $\p(t)$ is k-trivial and $T$ is an
  isometry, then $T\p(t)$ is k-trivial.
\end{lemma}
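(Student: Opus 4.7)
The plan is a one-line conjugation trick. By hypothesis, there exists an analytic trajectory of isometries $S(t)$ with $S(0)=I$ such that the vector-valued function $\p(t)-S(t)\p(0)$ is $k$-vanishing. The natural candidate for a witness of $k$-triviality for $T\p(t)$ is
\[
    S'(t) \;:=\; T \circ S(t) \circ T^{-1}.
\]
I would first check that $S'(t)$ is an analytic trajectory of isometries with $S'(0)=I$: conjugation of the analytic isometry family $S(t)$ by the fixed isometry $T$ preserves analyticity in $t$, composition of isometries is an isometry, and $S'(0)=T\,I\,T^{-1}=I$.

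Next I would compute, for each vertex index $i$,
\[
    S'(t)\bigl(T\p(0)\bigr)_i \;=\; T\bigl(S(t)\p(0)\bigr)_i,
\]
so that
\[
    T\p(t)_i \,-\, S'(t)(T\p(0))_i \;=\; T\p(t)_i - T(S(t)\p(0))_i.
\]
Writing $T\x = R\x + c$ with $R$ orthogonal and $c\in\RR^d$, the constant $c$ cancels in the difference, leaving $R\bigl(\p(t)_i - S(t)\p(0)_i\bigr)$. Since $R$ has bounded operator norm, this is $k$-vanishing in $t$ if and only if $\p(t)_i-S(t)\p(0)_i$ is, which holds by hypothesis. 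Hence $T\p(t)$ agrees with $S'(t)(T\p(0))$ through order $k$, proving $k$-triviality.

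There is really no obstacle in this argument; the only thing to double-check is that isometries of $\RR^d$ include translations, so one must verify that the translation part of $T$ does not spoil the order of vanishing. As shown above, the translation component cancels in the difference, so only the orthogonal part acts on the error term, and orthogonal matrices trivially preserve orders of vanishing in $t$.
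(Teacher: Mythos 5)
Your proposal is correct and follows essentially the same route as the paper: conjugating the witnessing trajectory of isometries by $T$ to obtain $T\bar{T}(t)T^{-1}$, checking it is an analytic trajectory of isometries equal to $I$ at $t=0$, and verifying the order-$k$ agreement (the paper compresses this into ``computation shows,'' which your explicit cancellation of the translation part and use of the orthogonal part fills in nicely).
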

\begin{proof}
  From the triviality assumption,
  we have $\p(t)$ in agreement
  through order $k$ with some
  $\bar{T}(t)\p(0)$, where $\bar{T}(t)$
  is an analytic trajectory of isometries with $\bar{T}(0)=I$.
  From linearity of derivative we see that $T\p(t)$ is in agreement
  through order $k$ with
  $T\bar{T}(t)\p(0)$. 
  So  $T\p(t)$ is in agreement
  through order $k$ with
  $T\bar{T}(t) T^{-1} (T\p(0))$.
 Meanwhile  $T\bar{T}(t) T^{-1}$ is an analytic trajectory of isometries that equals $I$
  at $t=0$. This means that  $T\p(t)$ is k-trivial. 

\end{proof}
We also have:
\begin{lemma}[{\cite[Lemma 4.2.3]{pss}}]
  \label{lem:ktrivTran}
  Let $T(t)$ be an analytic trajectory of isometries. Assume $T(0)=I$.
  Suppose an unpinned trajectory
  $\p(t)$ is k-trivial.
  Then $T(t)\p(t)$ is k-trivial.
\end{lemma}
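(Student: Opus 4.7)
}
The plan is to unpack the definition of $k$-triviality, feed in the isometry $\bar T(t)$ guaranteed by that definition, and then verify that composing with $T(t)$ preserves everything we need. By hypothesis, there is an analytic trajectory of isometries $\bar T(t)$ with $\bar T(0)=I$ such that $\p(t)$ and $\bar T(t)\p(0)$ agree through order $k$ in $t$, i.e.\ their difference is $O(t^{k+1})$. Define $S(t) := T(t)\circ \bar T(t)$. Since the isometry group is closed under composition and both $T(t)$ and $\bar T(t)$ are analytic trajectories, $S(t)$ is an analytic trajectory of isometries, and $S(0) = T(0)\bar T(0) = I\cdot I = I$, so $S(t)$ is a candidate witness for $k$-triviality of $T(t)\p(t)$.

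The remaining point is to check that $T(t)\p(t)$ and $S(t)\p(0) = T(t)\bigl(\bar T(t)\p(0)\bigr)$ agree through order $k$ in $t$. Writing each isometry $T(t)$ in the standard form $T(t)\x = R(t)\x + v(t)$ with $R(t)$ an analytic path of orthogonal matrices and $v(t)$ an analytic path of translation vectors, we get
\[
  T(t)\p(t) - T(t)\bar T(t)\p(0) \;=\; R(t)\bigl(\p(t) - \bar T(t)\p(0)\bigr).
\]
The factor in parentheses is $O(t^{k+1})$ by hypothesis, and $R(t)$ is analytic (in particular bounded near $0$), so the product is still $O(t^{k+1})$. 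This is exactly the statement that $T(t)\p(t)$ and $S(t)\p(0)$ agree through order $k$, and hence $T(t)\p(t)$ is $k$-trivial.

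I do not expect a real obstacle here: the whole argument is essentially that ``agreement through order $k$'' is stable under left-multiplication by any smooth family of isometries, which is immediate from the orthogonal-plus-translation decomposition. The only mild care-point is to note that $S(0)=I$ (which forces us to compose in the order $T\circ \bar T$ rather than $\bar T\circ T$, so that the normalization $\bar T(0)=T(0)=I$ lifts to $S(0)=I$), and to confirm that the composition of two analytic paths of isometries is again analytic, which follows because composition is polynomial in the entries of $R(t)$ and $v(t)$.
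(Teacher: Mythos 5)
Your proof is correct. Note, though, that the paper does not prove this lemma at all: it is imported verbatim as \cite[Lemma 4.2.3]{pss}, so there is no in-paper argument to compare against; what you have written is a legitimate self-contained replacement for that citation. Your route is the natural one: compose the two analytic isometry paths, $S(t):=T(t)\bar T(t)$, observe $S(0)=I$, and use the orthogonal-plus-translation decomposition so that $T(t)\p(t)-T(t)\bar T(t)\p(0)=R(t)\bigl(\p(t)-\bar T(t)\p(0)\bigr)$, which kills the translation and shows the difference stays $k$-vanishing (equivalently, apply Leibniz: every derivative of order $\le k$ of $R(t)g(t)$ vanishes at $0$ when $g$ does, with $g(0)=0$). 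This is structurally the same trick the paper uses to prove the neighboring Lemma~\ref{lem:transfer}, where the isometry is constant but need not fix $t=0$-normalization, so there the witness is the conjugate $T\bar T(t)T^{-1}$; in your setting the hypothesis $T(0)=I$ lets the plain composition serve as witness, and together the two statements give the paper's Lemma~\ref{lem:ktrivTran2}. One small inaccuracy in your commentary: the order of composition is not forced by the requirement $S(0)=I$ (both $T(t)\bar T(t)$ and $\bar T(t)T(t)$ equal $I$ at $t=0$, since each factor does); the order $T\circ\bar T$ is needed so that $S(t)\p(0)=T(t)\bigl(\bar T(t)\p(0)\bigr)$ lines up with the difference computation, and also so that the trajectory being compared starts at $T(0)\p(0)=\p(0)$, the initial point of $T(t)\p(t)$. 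This does not affect the validity of the proof.
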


Combining the two previous
lemmas, we get
\begin{lemma}
  \label{lem:ktrivTran2}
  Let $T(t)$ be an analytic trajectory of isometries.
  Suppose an unpinned trajectory
  $\p(t)$ is k-trivial.
  Then $T(t)\p(t)$ is k-trivial.
\end{lemma}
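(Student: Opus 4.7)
The plan is to reduce Lemma \ref{lem:ktrivTran2} to the two preceding lemmas by factoring $T(t)$ into a constant isometry and an analytic path of isometries based at the identity. Specifically, I would write
\[
    T(t) = T(0) \cdot S(t), \quad \text{where } S(t) := T(0)^{-1} T(t).
\]
Since the isometries of $\RR^d$ form a group, $S(t)$ is an analytic trajectory of isometries, and by construction $S(0) = T(0)^{-1} T(0) = I$.

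Now the hypothesis that $\p(t)$ is $k$-trivial lets me apply Lemma~\ref{lem:ktrivTran} to $S(t)$ (which satisfies $S(0)=I$), giving that $S(t)\p(t)$ is $k$-trivial. Then $T(0)$ is a (constant) isometry, so Lemma~\ref{lem:transfer} applied to the $k$-trivial trajectory $S(t)\p(t)$ shows that
\[
    T(0)\bigl(S(t)\p(t)\bigr) = T(t)\p(t)
\]
is $k$-trivial, which is the desired conclusion.

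There is no real obstacle here since both preceding lemmas have done the substantive work; the only thing to check is that factoring out $T(0)$ preserves analyticity of the remaining path, which is immediate because left multiplication by a fixed isometry is an analytic (in fact affine-linear) map on configuration space and on the isometry group. So the proof is essentially a one-line composition argument combining Lemmas~\ref{lem:transfer} and~\ref{lem:ktrivTran}.
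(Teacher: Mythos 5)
Your proof is correct and follows exactly the route the paper intends: the paper states this lemma with only the remark ``Combining the two previous lemmas, we get,'' and your factorization $T(t) = T(0)\,S(t)$ with $S(0)=I$, applying Lemma~\ref{lem:ktrivTran} and then Lemma~\ref{lem:transfer}, is precisely that combination spelled out.
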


\begin{lemma}
  \label{lem:trivPinZerohigher}
  Let $\p$ be lead-spanning.
  If a trajectory $\p(t)$, with $\p(0)=\p$, is pinned and
  $k$-trivial  then it is $k$-vanishing.
\end{lemma}

\begin{proof}
  Suppose that $\p(t)$ is not
  $k$-vanishing and is $k$-trivial.
  So $\p(t)$ is $j$-active for
  some $j\le k$
  Then from~\cite[Lemma 4.2.2]{pss},
  $\p+\p^{(j)}t$ 
  is $1$-trivial.
  As discussed in \cite[p. 171, inter alia]{pss},
  for a $1$-trivial flex, there is a $d\times d$
  skew-symmetric $A$ and a vector $\b\in \RR^d$ such that, for all $i$
  \ba
  \p^{(j)}_i = A \p_i + \b.
  \ea
  Because $\p(t)$ is pinned,
  the same thing must be true at any order, so in particular,
  $\p^{(j)}$ is pinned.
  That $\b = 0$ follows from the fact that $\p_1^{(j)}$ must be $0$.

  Let $\p$ have an $\ell$-dimensional affine span.
  We  show below by induction
 that the first $\ell$ columns of $A$
  are zero.
  Then, since $\p$ is pinned and lead-spanning,
  all of the $\p_i$ are supported only
  in the first $\ell$ coordinates.
  Thus we will necessarily have $\p_i^{(j)}=0$
  for all $i$.
  This will contradict the assumed $j$-activity, yielding the
  desired contradiction.

  For the base case of the induction,
  observe that due to pinning,
  $\p^{(j)}_2$ is a
  multiple of $e_1$.
  So the above discussion implies that
  $A\p_2 = \alpha e_1$ for some $\alpha$.
  Due to lead-spanning,
  the first coordinate of $\p_2$ is
  non-zero.
  So  the first column of $A$ equals $\beta e_1$
  for some $\beta$.
  Since $A$ is
  skew-symmetric, its diagonal entries are zero, from which
  we conclude that $\beta = 0$; i.e., that the first
  column (and thus also first row)
  of $A$ is zero.

  We now induct on columns $i=[2...\ell]$.
  For the inductive step, we assume
  the statement for $i-1$.  We know that $\p^{(j)}_{i+1}$
  is in the span of $e_1, \ldots, e_{i}$ and so
  $A\p_{i+1}$ is as well.  By the inductive hypothesis 
  and skew -symmetry of
  $A$, we get that
  $A\p_{i+1} = \alpha e_{i}$, since the
  first $i-1$ entries of the $(i)$th column of $A$ are known
  to be zero.
  Due to lead-spanning,
  the $i$th coordinate of $\p_{i+1}$ is
  non-zero.
  So  the $i$th column of $A$ equals $\beta e_i$
  for some $\beta$.
  Because $A$ has zeros on the diagonal, we conclude
  that $\beta = 0$, which closes the induction.
\end{proof}

\begin{proof}[Proof of Proposition \ref{prop:jkpin}]
  Let $(G,\p)$ a lead-spanning, unpinned framework with a $(j,k)$
  flex $\p(t)$;
  i.e., $\p(t)$ is $(j-1)$-trivial, $\p(t)$ is not $j$-trivial,
  and $\m(\p(t))$
  is $k$-vanishing.  By Lemma \ref{lem:isoTraj}, there is an analytic
  trajectory of isometries $T(t)$ such that $\q(t) := T(t)\p(t)$
  is an analytic pinned trajectory.  From Lemma
  \ref{lem:ktrivTran2} (applied to
  $T(t)$), we conclude that $\q(t)$ is $(j-1)$-trivial.  Applying
  Lemma \ref{lem:ktrivTran2} to $T^{-1}(t)$ (which is also analytic),
  we get that $\q(t)$ is not $j$-trivial.
  By Lemma \ref{lem:trivPinZerohigher}, because
  $\q(0)$ is lead spanning,
  $\q(t)$ is $(j-1)$-trivial
  and $\q(t)$ is
  pinned, $\q(t)$ is $(j-1)$-vanishing.
  Clearly,because $\q(t)$ is not $j$-trivial, it
  is not $j$-vanishing. Thus it is $j$-active.
  Because $\m(\p(t)) = \m(T\p(t)) = \m(\q(t))$,
  the derivatives are the same at all orders, so we get that
  $\m(\q(t))$ is $k$-vanishing.
  In summary, $\q(t)$ is a pinned $(j,k)$ flex.

  For the other direction, let $\q$ be a lead-spanning, pinned
  configuration.
  Let $\q(t)$ be a pinned $(j,k)$ flex.
  Let $(G,\p)$ be a framework that is congruent to $\q$.  There
  is an isometry $T$ so that $\p = T\q$.
  We define $\p(t) := T\q(t)$.
  This is an analytic trajectory starting at $\p$.  We will show that
  $\p(t)$ is an unpinned $(j,k)$ flex.
  As before, since $\m$ is invariant to
  isometries, $\m(\p(t))$ is $k$-vanishing because $\m(\q(t))$
  is.  Because
  $T$ is affine and $\q(t)$ is $(j-1)$-vanishing, $\p(t)$ is
  also $(j-1)$-vanishing,
  which, in particular implies it is $(j-1)$-trivial.  By Lemma
  \ref{lem:transfer},
  if $\p(t)$ was $j$-trivial, then $\q(t)$ would also be
  $j$-trivial and then, by
  Lemma \ref{lem:trivPinZerohigher} $j$-vanishing.  Since $\q(t)$ is
  not $j$-vanishing,
  we reach a contradiction and
  thus  conclude that $\p(t)$ is not $j$-trivial.


\end{proof}

\begin{remark}
When $\p$ is lead-spanning and
has a deficient, $\ell$-dimensional, span,
we can also just apply the pinning construction 
to the first $\ell+1$ vertices $\p$ and of nearby configurations $\q$. We call such configurations
\defn{$\ell$-pinned}.  Under this notion of pinning,
Propositions~\ref{prop: pinned rigid} and~\ref{prop:jkpin}  still hold, and 
with the same proofs.
\end{remark}

\newpage
\section{Coordinates}\label{sec:coords}

\paragraph{Half flat prism, Figure \ref{fig:k33} (left).}

\phantom{.}

\noindent Coordinates:\\
\begin{tabular}{c c}
  -1   &  1\\
  -1 & 1 \\
  1 & 1\\
  1 & -1\\
  -1 & 0\\
  2 & 0\\
\end{tabular}

\noindent Edges:\\
\begin{tabular}{cc}
  (1,2),(2,4),(1,3),(3,4),(1,5),(2,5),(3,6),(4,6),(5,6)
\end{tabular}

\paragraph{Leonardo-3, Figure \ref{fig:k33} (right).}

\phantom{.}

\noindent Coordinates:\\
\begin{tabular}{c c}
  0&0\\
  1&-2\\
  2&0\\
  1&0\\
  1 &-1\\
  2&-1\\
  3&-1\\
\end{tabular}

\noindent Edges:\\
\begin{tabular}{cc}
  (1,4),(4,3),(4,5),(5,2),(1,2),(2,3),(3,1),(5,6),(6,7),(7,3),(7,2)
\end{tabular}

\paragraph{Flipped Prism, Figure \ref{fig:xnan2} (left).}

\phantom{.}

\noindent Coordinates:\\
\begin{tabular}{c c}
  -1& -1 \\
  -1 &1\\
  1& 1\\
  1 &-1\\
  -2& 0\\
  2 & 0\\
\end{tabular}\\

\noindent Edges: Same as half flat prism.

\paragraph{Asymmetric Flipped Prism, Figure \ref{fig:xnan2} (middle).}

\phantom{.}

\noindent Coordinates:\\
\begin{tabular}{c c}
  -1.183215956619924  &-1\\
  -1.183215956619924 &  1\\
  2.366431913239848  & 2\\
  1.183215956619924  &-1\\
  -2.366431913239848   &                0\\
  2.366431913239848   &                0
\end{tabular}\medskip

\noindent Edges: Same as half flat prism.

\paragraph{K33, Figure \ref{fig:xnan2} (right).}

\phantom{.}

\noindent Coordinates:\\
\begin{tabular}{c c}
  -1.829040214974252  &-0.599547897288109\\
  0.324472983863170  & 0.989869431354628\\
  -2.262180494279219  & 0.141980663717513\\
  1.370166120571291 & -0.800338877512150\\
  0.898912230425408  & 0.919393148593120\\
  -2.269078350996699 & -0.119054271080971\\
\end{tabular}

\noindent Edges:\\
\begin{tabular}{cc}
  (1,4),(1,5),(1,6),(2,4),(2,5),(2,6),(3,4),(3,5),(3,6)
\end{tabular}

\paragraph{Sphere packing 1, Figure \ref{fig:sph2} (left)}

\phantom{.}

\noindent Coordinates:\\
\begin{tabular}{c c c}
  0.555555555555555 &  1.283000598199169 & -0.907218423253029\\
  0           &        0            &       0\\
  -0.048895814363237 & -0.090211419313511 & -1.688450701386537\\
  1.481861172546362 & -0.744680624721761 & -1.676619470036412\\
  1&  0  &0\\
  1.333333333333333 &  0.769800358919501 & -0.544331053951817\\
  0.555554707548275 &  0.705650373518420 & -1.723715035652841\\
  0.000000693824970 & -0.577350669769242 & -0.816496297674721\\
  0.5 &  0.866025403784439 &  0\\
  0.833332928602402 & -0.152684581105811 & -2.155108551573311\\
  0.500000520368547 & -0.922485207078112 & -1.610777308786648\\
  1.333333102058343&   0.192450757362848 & -1.360828106966903\\
  1.000000693824489&  -0.577349868609453&  -0.816496864180240\\
  0.5&   0.288675134594813&  -0.816496580927726\\
\end{tabular}

\noindent Edges:\\
\begin{tabular}{ll}
  (1,6),(1,7),(1,9),(1,14),(2,5),(2,8),(2,9),(2,14),(3,7),(3,8),(3,10),(3,11),(4,10),\\(4,11),
  (4,12),(4,13),(5,6),(5,9),(5,13),(5,14),(6,9),(6,12),(6,14),(7,10),(7,12),\\(7,14),(8,11),
  (8,13),(8,14),(9,14),(10,11),(10,12),(11,13),(12,13),(12,14),(13,14)
\end{tabular}

\paragraph{Sphere packing 2, Figure \ref{fig:sph2} (middle)}

\phantom{.}

\noindent Coordinates:\\
\begin{tabular}{c c c}
  0.555555555555556  & 1.283000598199168  & 0.907218423253029\\
  0&   0 &  0\\
  1.068487666406798  & 0.087534503658474  & 2.319394008434100\\
  0.5 & -1.225331525265314  & 1.391057536592635\\
  1 &0   & 0\\
  1.333333333333333   &0.769800358919501  & 0.544331053951817\\
  0.555555555555556  & 0.705650329009543  & 1.723715004180755\\
  0  & -0.577350269189626  & 0.816496580927726\\
  0.5 & 0.866025403784439 &  0\\
  1.276518925548863  & -0.719615719820751  & 1.766916396256734\\
  1.333333333333333  &  0.192450089729875  & 1.360827634879543\\
  1 &  -0.577350269189626 &   0.816496580927726\\
  0.5  & 0.288675134594813  &  0.816496580927726\\
\end{tabular}

\noindent Edges:\\
\begin{tabular}{ll}
  (1,6),(1,7),(1,9),(1,14),(2,5),(2,8),(2,9),(2,14),(3,7),(3,10),(3,11),(3,12),(4,8),\\(4,10),
  (4,11),(4,13),(5,6),(5,9),(5,13),(5,14),(6,9),(6,12),(6,14),(7,10),(7,12),\\(7,14),(8,10),(8,13),(8,14),(9,14),(10,11),(11,12),(11,13),(12,13),(12,14),(13,14)
\end{tabular}

\paragraph{Coned prism, Figure \ref{fig:sph2} (right)}

\phantom{.}

\noindent Coordinates:\\
\begin{tabular}{c c c}
  1.870828693386962   & 1 & 1\\
  1.870828693386962   & 1 & -1\\
  1.870828693386962  & -1 & 1\\
  1.870828693386962  & -1 & -1\\
  1.870828693386962  & 1 & 0\\
  -1.870828693386962  & -1 & 0\\
  0         &          0  & 2\\
\end{tabular}

\noindent Edges:\\
\begin{tabular}{ll}
  (1,2),(2,4),(4,3),(3,1),(5,1),(5,2),(6,3),(6,4),(5,6),(7,1),(7,2),(7,3),(7,4),(7,5),(7,6)
\end{tabular}

\end{document}